\let\normallabel=\label 
\DeclareSymbolFontAlphabet{\mathbbm}{bbold}
\DeclareSymbolFontAlphabet{\mathbb}{AMSb}
\renewcommand*\env@matrix[1][*\c@MaxMatrixCols c]{%
  \hskip -\arraycolsep
  \let\@ifnextchar\new@ifnextchar
  \array{#1}}
\newcommand{\iyoung}[1]{{\scriptsize\Yvcentermath1 \young(#1)}}
\newcommand{\labeltarget}[1]{\Hy@raisedlink{\hypertarget{#1}{}}}
\numberwithin{equation}{subsection}
\newcommand{\based}{ground }
\newcommand{\Based}{Ground }
\newcommand{\tcH}{\widetilde{\cH}}
\def\<{\langle}
\def\>{\rangle}
\DeclareMathSymbol{\hp}{\mathord}{AMSa}{"39}
\newcommand{\alphab}{\overline{\alpha}}
\newcommand{\cA}{\mathcal{A}}
\newcommand{\cB}{\mathcal{B}}
\newcommand{\cH}{\mathcal{H}}
\newcommand{\cM}{\mathcal{M}}
\newcommand{\CC}{\mathbb{C}}
\newcommand{\End}{\operatorname{End}}
\newcommand{\Ext}{\mrm{Ext}}
\newcommand{\ft}{\mathfrak{t}}
\newcommand{\GL}{\mrm{GL}}
\newcommand{\Hom}{\mrm{Hom}}
\newcommand{\id}{\mathbbm{1}}
\newcommand{\inv}{^{-1}}
\newcommand{\mrm}{\mathrm}
\newcommand{\ind}{{\textup{ind}}}
\newcommand{\NN}{\mathbb{N}}
\newcommand{\QQ}{\mathbb{Q}}
\newcommand{\SK}{\chi(S)}
\newcommand{\RK}{\chi(R)}
\newcommand{\tif}{\textup{if }}
\newcommand{\triv}{{\textup{triv}}}
\newcommand{\ud}[2]{\underbracket{#2}_{\mathclap{#1}}}
\newcommand{\udr}[2]{\underbracket{#2}_{\mathrlap{#1}}}
\newcommand{\ZZ}{\mathbb{Z}}
\renewcommand{\~}[1]{\widetilde{#1}}
\renewcommand{\=}[1]{\overline{#1}}
\newenvironment{enua}{\begin{enumerate}[label=\textup{(\alph*)}]
}{\end{enumerate}}
\newenvironment{enuA}{\begin{enumerate}[label=\textup{(\Alph*)}]
}{\end{enumerate}}
\newenvironment{eq}{\begin{equation}}{\end{equation}}
\theoremstyle{definition}
\newtheorem{Def}{Definition}[subsection]
\theoremstyle{plain}
\newtheorem{prop}[Def]{Proposition}
\newtheorem{thm}[Def]{Theorem}
\newtheorem{mainthm}{Theorem}
\newtheorem{theorem}[Def]{Theorem}
\newtheorem{lem}[Def]{Lemma}
\newtheorem{cor}[Def]{Corollary}
\begin{document}

\title{Quantum wreath products and Schur--Weyl duality II}

\begin{abstract}
In the first part of this series, the authors introduced the quantum wreath product, providing a unified framework that encompasses numerous results previously addressed only through case-by-case analysis.
This paper shifts focus to the fundamental construction of modules over these products, termed wreath modules. 
Our approach utilizes parabolic induction on tensor products combined with a sophisticated labeling scheme based on multipartitions. 
While the underlying constructions are technically involved, they offer a transparent realization of several prominent module families. 
Specifically, these wreath modules recover and unify:
Simple modules over the Ariki--Koike algebra;
Specht and simple modules over the Hu algebra;
(anti)spherical modules and Kashiwara--Miwa--Stern modules over the affine Hecke algebra and its pro-$p$ Iwahori variants.
Finally, we demonstrate that these wreath modules for the Hu algebra serve as a critical component in solving the Ginzburg--Guay--Opdam--Rouquier problem. 
This solution enables a concrete realization of Category O for the rational Cherednik algebra in Type D.
\end{abstract}

\author{\sc Chun-Ju Lai}
\address{Institute of Mathematics \\ Academia Sinica \\ Taipei 106319, Taiwan} 
\email{cjlai@gate.sinica.edu.tw}
\thanks{Research of the first author was supported in part by
MSTC grants 114-2628-M-001-001}

\author{\sc Daniel K. Nakano}
\address{Department of Mathematics\\ University of Georgia \\
Athens\\ GA~30602, USA}
\thanks{Research of the second author was supported in part by
NSF grant DMS-2401184}
\email{nakano@math.uga.edu}

\author{\sc Ziqing Xiang}
\address{ Department of Mathematics\\ Southern University of Science and Technology \\Shenzhen 518055, China} 
\thanks{Research of the third author was supported in part by
NSFC (12350710787) and NSFC (12471311)}
\email{xiangzq@sustech.edu.cn}
\keywords{}
\subjclass{}

\maketitle


\section{Introduction}

\subsection{Quantum Wreath Products}
In \cite{LNX24}, the authors introduced the notion of {\em quantum wreath products}, which enables a simultaneous study of the structure and representation theory for  algebras defined via Bernstein-Lusztig presentations.  
Such a unified approach reveals potential applications to modular representation theory, categorification, and geometric representation theory \cite{MM22,HL24,LM25}.
Similar notions have already been studied intensively by other groups for different purposes, see the work \cite{S20,RS20,BSW20,SS21,BSW21} on Heisenberg categorification, \cite{E22,EG} on Khovanov--Lauda--Rouquier algebras, and \cite{MS} regarding Rees algebras arising from Frobenius algebras.

Notable examples of quantum wreath products include:
\begin{enuA}
\item {Deformations of the group algebra of $G \wr \Sigma_d$ where  $\Sigma_d$ is the symmetric group on $d$ letters.
In particular, When $G = C_m$ is the cyclic group of order $m$, the (modified) Ariki--Koike algebras \cite{AK94,SS05} and the Yokonuma--Hecke algebras \cite{Y67} are all quantum wreath products.}

\item {Affine Hecke algebras for the general linear group $\GL_d$ and their variants (see \cite{LM25,BL26}).
In particular, this include the Iwahori--Hecke algebras, the pro-$p$-Iwahori--Hecke algebras \cite{V16,CS16},
the metaplectic pro-$p$-Iwahori--Hecke algebras \cite{GGK24}, and their degenerate versions.
These algebras are deformed from $\ZZ \wr \Sigma_d$, $(C_m\rtimes\ZZ)\wr \Sigma_d$, $\NN \wr \Sigma_d$, and $(C_m \rtimes \NN) \wr \Sigma_d$, respectively.}
\item {The Hu algebras \cite{Hu02}, which are highly non-trivial quantizations of $\Sigma_m \wr \Sigma_d$ (a group that is not even a complex reflection group) whose representation theory governs that for the complex reflection group $G(d,d,dm)$.
Specifically, when $d=2$, the Hu algebra governs the representation theory of the Hecke algebra of type D.
}
\item {Kleshchev--Muth's  affine zigzag algebras $Z^{\textup{aff}}_{\Gamma}$ for a Dynkin quiver of type ADE \cite{KM19}, which are Morita equivalent to the imaginary strata of affine Khovanov--Lauda--Rouquier algebras.}
\end{enuA}
Given a unital associative algebra $B$ (which we call the base algebra) over a commutative ring $K$, and a choice of parameters $(S, R,\sigma, \rho)$ such that $S, R \in B\otimes B$ and $\sigma, \rho \in \End_K(B\otimes B)$, we denote  by $B\wr \cH(d)$ the quantum wreath product, which is a unital associative algebra generated by elements in $B^{\otimes d}$ and symbols $H_1, \dots, H_{d-1}$ subject to the braid relations, as well as other relations determined by the parameters.

While the braid relations imply that elements of the form $H_w$ for $w\in \Sigma_d$ are well-defined,
the free $K$-submodule $\cH(d)$ with basis $\{ H_w\}_{w\in \Sigma_d}$ is generally not a subalgebra of $B \wr \cH(d)$ because multiplication between $H_w$'s is not closed.
In fact, the theory of quantum wreath product is devoted to handling the failure of $\cH(d)$ to be a subalgebra.

In  \cite{LNX24}, necessary and sufficient conditions on $Q$  so that $B\wr \cH(d)$ affords PBW bases (see Conditions \eqref{def:P1}--\eqref{def:P9} in Section \ref{sec:QWP}) were obtained.
Additionally, a uniform approach to the construction of Schur functors was developed for certain finite-dimensional quantum wreath products.

\subsection{Wreath Modules}

{We begin with an important observation on the simple $B\wr\cH(d)$-modules.
In all known cases, every simple $B\wr\cH(d)$-module is obtained from the tensor product of certain ``building blocks'' via a parabolic induction.
These building blocks -- called the Specht modules, the cuspidal modules, discrete series, or the 1-dimensional Steinberg representations in various contexts -- all have underlying spaces of the form
$M^{\otimes d}\otimes N$,
where $M$ is a $B$-module and $N$ has a module structure over a certain algebra which may depends on $M$.
}

{
The main contribution of this paper is to establish a close connection between the representation theory of $B\wr \cH(d)$ and that of the base algebra $B$.
Given a $B$-module $M$ and a free $K$-module $N$,
we provide a unifying construction of a $B\wr \cH(d)$-module $M\wr N$ with underlying space $M^{\otimes d}\otimes N$. We call them the {\em wreath modules}, which recovers all the aforementioned building block modules.
Motivating examples of our construction, for algebras over $\CC$, are given in Table \ref{tab:WM} below.
Note that every $M$ appearing therein is a simple $B$-module.
}
\begin{table}[!htbp]
{
\[
\begin{array}{|c||c|c|c|}
\hline
\textup{Quantum wreath product} 
    & B 
    &\textup{Wreath modules}
    &\textup{Index set}
\\
\hline
\begin{array}{c}
\textup{Hu algebra}
       \\
     \cA(m)  \cong B\wr \cH(2)
\end{array}
&
\cH_q(\Sigma_m)
&
\begin{array}{l}
(S^\lambda)^{\otimes 2} \otimes \CC_{\sqrt{f_\lambda}} =   S^\lambda \wr S^{(2)}_{\chi_\lambda} 
    \\
(S^\lambda)^{\otimes 2} \otimes \CC_{-\sqrt{f_\lambda}} =   S^\lambda \wr S^{(1,1)}_{\chi_\lambda}     
\end{array}
&
\{\lambda \vdash m\}
\\
\hline
\begin{array}{c}
\textup{Yokonuma--Hecke algebra}
       \\
    Y_{m,d}  \cong B\wr \cH(d)
\end{array}
&
\frac{\CC[t]}{(t^m-1)}
&
(\CC_\lambda)^{\otimes k} \otimes S^\nu
=
\CC_\lambda \wr S^\nu_{\chi_\lambda}
&
\begin{array}{c}
\{\lambda \in \CC|\lambda^m=1\}     
     \\
     (\nu \vdash k\leq d)
\end{array}
\\
\hline
\begin{array}{c}
\textup{Affine Hecke algebra}
       \\
    \cH_q^{\textup{ext}}(\Sigma_d)  \cong B\wr \cH(d)
\end{array}
&
\CC[x^{\pm1}]
&
(\bigotimes_{i=1}^k \CC_{\lambda q^{i-1}}) \otimes \triv
= \CC_{\lambda} \wr S^{(k)}_{\chi_\lambda}
&
\{\lambda \in \CC^\times\}
\\
\hline
\end{array}
\]
\caption{Motivating examples of wreath modules.}
}
\label{tab:WM}
\end{table}
{
Here, $\sqrt{f_\lambda} \in \CC$ is defined in Section \ref{sec:RTHu}. The modules $\CC_{\pm \sqrt{f_\lambda}}$ should be regarded as the ``trivial module'' $S^{(2)}_{\chi_\lambda}$ and the``sign module'' $S^{(1,1)}_{\chi_\lambda}$ over an intermediate algebra $\CC[h]/(h^2 - f_\lambda)$, respectively. 
The subscript $\chi_\lambda$ indicates the dependence of $\lambda$, and will be explained shortly.
}

Now, we develop a new theory determining the conditions under which the $K$-module $M^{\otimes d} \otimes N$ affords a $B \wr \cH(d)$-module structure.
Let $\alpha_M: B^{\otimes d}\otimes M^{\otimes d} \to M^{\otimes d}$ be the structure map of the $B^{\otimes d}$-module $M^{\otimes d}$.
The module structure of $M\wr N$ will be given by:
\eq
(b H_w) \cdot (m \otimes n) := \alpha_{MN}(b \otimes w \otimes m \otimes n),
\endeq
for $b\in B^{\otimes d}, \ w \in \Sigma_d, \ m\in M^{\otimes d}, \ n \in N$, via the following composition:
\eq\label{def:QWPAction}
\xymatrixcolsep{4.5pc}
\xymatrix{
\alpha_{MN}:B^{\otimes d} \otimes K\Sigma_d \otimes M^{\otimes d}\otimes N
\ar[r]^(.56){{\id\otimes \tau^M \otimes \id}} 
&B^{\otimes d}\otimes M^{\otimes d}\otimes \tcH \otimes N
\ar[r]^(.60){{\alpha_M \otimes \alpha_N}}
& M^{\otimes d}\otimes N,
}
\endeq
where
\begin{enuA}
\item $\tau^M = \tau^M(r): K\Sigma_d \otimes M^{\otimes d} \to M^{\otimes d} \otimes \tcH$ is the structure map (see Definition \ref{def:tauaction}), which depends on two $K$-linear maps $\sigma^M, \rho^M \in \End_K(M\otimes M)$, and a choice map $r$ (see \eqref{def:choicemap}). 
Note that the vector space $K\Sigma_d$ is used to keep track of the $K$-submodule $\{H_w\}_{w\in \Sigma_d}$, and should not be interpreted as a group algebra.
\item $\tcH =\tcH^\chi_d$ is the {\em twisted Hecke algebra} (see Definition \ref{def:Hc}).
Here, $\chi = (S^M, R^M) \subseteq \End_B(M)^2$, records the scalar multiples $\chi(S), \chi(R) \in K$ of the actions $S^M, R^M$  of $S, R$ on $M$.
\item $N$ is a module over $\tcH$ with structure map $\alpha_N: \tcH\otimes N \to N$.
\end{enuA}


We show that  \eqref{def:QWPAction} defines a $B\wr\cH(d)$-action if and only if $(M, \tau^M, \tcH)$ is a {\em \based module} (see Definition \ref{def:basedmod})  and with $N \in \tcH$-mod.
The above requirements are equivalent to the following explicit conditions on the tuple $(M, \sigma^M, \rho^M, S^M, R^M)$:
\begin{mainthm}[Theorems \ref{thm:basedmod} and \ref{thm:MwrN}]\label{thmA}
Suppose that $M$ is a $B$-module such that $S$ (resp. $R$) acts on $M\otimes M$ by $\chi(S)$ (resp. $\chi(R)$) in $K$,
and $N$ is an $\tcH^\chi_d$-module.
Then, the $K$-module $M \wr N$ is a $B \wr \cH(d)$-module if and only if the following hold, for all $b\in B^{\otimes d}, m \in M^{\otimes d}, 1\leq i < d$:
\begin{itemize}
\item[\textup{(M2)}]
$\sigma^M_i(b\cdot m) = \sigma_i(b)\cdot \sigma^M_i(m)$,
$\rho_i^M(b\cdot m) =  \sigma_i(b) \cdot \rho^M_i(m) + \rho_i(b) \cdot m$,
\item[\textup{(M4)}]
$\SK(\sigma^M_i)^2+\rho^M_i\sigma^M_i+\sigma^M_i\rho^M_i = l^M_{S_i}\sigma^M_i$,
$\RK (\sigma^M_i)^2+(\rho^M_i)^2 = l^M_{S_i} \rho^M_i + l^M_{R_i}$,
\end{itemize}
where $l^M_X\in \End_K(M^{\otimes d})$ means  the left $B^{\otimes d}$-action by $X \in {B^{\otimes d}}$.
The next three conditions hold additionally for $|i-j|=1$,  if $d \geq 3$:
\begin{itemize}
\item[\textup{(M5)}]
$\sigma^M_i\sigma^M_j\sigma^M_i = \sigma^M_j \sigma^M_i \sigma^M_j$,
$\rho^M_i\sigma^M_j\sigma^M_i = \sigma^M_j \sigma^M_i \rho^M_j$,
\item[\textup{(M6)}]
$\rho^M_i\sigma^M_j\rho^M_i = \SK \sigma^M_j \rho^M_i \sigma_j^M + \rho_j^M\rho_i^M\sigma_j^M + \sigma_j^M \rho_i^M \rho_j^M$,
\item[\textup{(M7)}]
$\rho_i^M\rho_j^M\rho_i^M +\RK \sigma_i^M \rho_j^M \sigma_i^M
= \rho_j^M\rho_i^M\rho_j^M + \RK \sigma_j^M \rho_i^M \sigma_j^M$.
\end{itemize}
\end{mainthm}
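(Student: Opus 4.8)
The plan is to reduce the claim to the defining relations of $B \wr \cH(d)$ by checking that each generator relation is respected by the formula \eqref{def:QWPAction}, and to organize the verification so that the ``$M$-side'' conditions (M2)--(M7) are forced by, and conversely suffice for, the relations that survive after passing to the representation $M^{\otimes d} \otimes N$. First I would recall from Section~\ref{sec:QWP} the full list of defining relations of $B \wr \cH(d)$ (the braid relations among the $H_i$, the quadratic/deformed relation $H_i^2 = \chi(S) S_i H_i + \chi(R) R_i$-type relation, and the cross relations $H_i b = \sigma_i(b) H_i + \rho_i(b)$ expressing how $H_i$ moves past $B^{\otimes d}$), together with the PBW conditions \eqref{def:P1}--\eqref{def:P9} that these parameters are assumed to satisfy. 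The strategy is then: show $(M,\tau^M,\tcH)$ being a \based module is \emph{exactly} the statement that $\tau^M$ is compatible with all these relations \emph{after} one has factored out the part of each relation that already lives in $\tcH$ (and hence is handled by $\alpha_N$, since $N$ is assumed to be an $\tcH^\chi_d$-module). Concretely, I would apply both sides of each defining relation of $B\wr\cH(d)$ to an element $m\otimes n \in M^{\otimes d}\otimes N$, push everything through the composition in \eqref{def:QWPAction}, and read off what is needed.

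The heart of the argument is the following dictionary, which I would establish relation-by-relation. (i) The cross relation $H_i b = \sigma_i(b)H_i + \rho_i(b)$, evaluated via $\tau^M$, becomes precisely (M2): the $\sigma^M_i$-part must intertwine the $B^{\otimes d}$-action with $\sigma_i$, and the $\rho^M_i$-part must satisfy the inhomogeneous cocycle-type identity $\rho^M_i(b\cdot m) = \sigma_i(b)\cdot\rho^M_i(m) + \rho_i(b)\cdot m$. (ii) The quadratic relation for $H_i^2$, evaluated on $m\otimes n$, produces a sum of terms: those with a factor of $H_i$ or $1$ in the $\tcH$-slot are absorbed by $\alpha_N$ using that $S,R$ act on $M\otimes M$ by the scalars $\chi(S),\chi(R)$, while the remaining ``purely $M$'' part must vanish against the $\tcH$-relation $H_i^2 = \chi(S)S_i H_i + \chi(R)R_i$ in $\tcH^\chi_d$; matching coefficients of the two monomials $H_i$ and $1$ in the twisted Hecke algebra gives exactly the two identities in (M4), with $l^M_{S_i}$ and $l^M_{R_i}$ appearing as the images of $S_i, R_i \in B^{\otimes d}$ acting on $M^{\otimes d}$. (iii) The braid relation $H_iH_jH_i = H_jH_iH_j$ for $|i-j|=1$, written out through two applications of $\tau^M$, splits according to which of $\sigma$ or $\rho$ is chosen at each of the three steps; collecting terms by their image in $\tcH$ and using the braid and quadratic relations inside $\tcH^\chi_d$ to identify those images, the leftover $M$-side conditions are precisely (M5), (M6), (M7)—(M5) being the ``top'' ($\sigma\sigma\sigma$) and ``next'' components, (M6) the mixed $\rho\sigma\rho$ component, and (M7) the ``bottom'' $\rho\rho\rho$ component, with the $\chi(S)$ and $\chi(R)$ coefficients entering exactly where a $\sigma$ is traded for an $H_i$ or a $\rho$ in the underlying relation. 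The commuting relation $H_iH_j = H_jH_i$ for $|i-j|\geq 2$ and the compatibility of the $B^{\otimes d}$-action are automatic from the tensor structure of $M^{\otimes d}$ and the corresponding PBW conditions, so no new $M$-condition arises there (matching the fact that there is no ``(M1)'' or ``(M3)'' in the statement).

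For the converse direction I would run the same computations backwards: assuming (M2), (M4)--(M7) and that $N$ is an $\tcH^\chi_d$-module, each defining relation of $B\wr\cH(d)$ is verified on the generating set $M^{\otimes d}\otimes N$, and since those elements generate $M\wr N$ and the relations are exhaustive, \eqref{def:QWPAction} is a well-defined action. The main obstacle is bookkeeping in step (iii): the braid relation unwinds into eight terms on each side according to the three binary $\sigma/\rho$ choices, and one must correctly track, for each term, both its ``$M^{\otimes d}$-component'' (an operator built from the $\sigma^M_i,\rho^M_i, l^M_{S_i}, l^M_{R_i}$) and its ``$\tcH$-component'' (a word in the $H_i$ that must then be normalized using the relations of $\tcH^\chi_d$, which is where new $l^M_X$ factors and $\chi(S),\chi(R)$ scalars are generated). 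The key technical point making this tractable is that the choice map $r$ in Definition~\ref{def:tauaction} is designed so that $\tau^M$ is ``triangular'' with respect to length in $\Sigma_d$, so the normalization terminates and the coefficient comparison is finite; once the terms are grouped by their $\tcH$-component, the equality of the two sides reduces, component by component, to exactly the list (M5)--(M7). I expect the rest—(M2) and (M4)—to be a short direct check, with the reference to Definitions~\ref{def:basedmod}, \ref{def:tauaction}, \ref{def:Hc} and the PBW conditions doing most of the conceptual work.
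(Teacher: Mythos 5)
Your plan—push each defining relation of $B\wr\cH(d)$ through the action formula, normalize the $\tcH$-component using the braid and quadratic relations of $\tcH^\chi_d$, and match coefficients basis element by basis element—is correct and does land precisely on (M2), (M4), (M5)--(M7): comparing coefficients of $h_w$ in the six-term expansions of $H_iH_jH_i$ and $H_jH_iH_j$ reproduces the paper's computation verbatim (this is essentially what appears in Section~\ref{app:neci} and \ref{app:indep}), and the commuting relation for $|i-j|\geq 2$ is indeed automatic from disjointness of tensor slots. However, your route is genuinely different from the paper's. You verify the defining relations directly, which establishes that the generator-by-generator action descends to $B\wr\cH(d)$; the paper instead introduces the intermediate notion of a \emph{\based module} (Definition~\ref{def:basedmod}), phrased as commutativity of the two diagrams \hyperref[WB]{(WB)} and \hyperref[WW]{(WW)} together with independence of $\tau^M$ from the choice map $r$, and then proves equivalence with (M2)--(M7) via a double induction on word length (Steps 1--3 of Theorem~\ref{thm:basedmod}, the ``grand loop argument''), before finally deducing the module structure in Theorem~\ref{thm:MwrN} by chasing the large diagram \eqref{fig:bigcomm}. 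Your approach is shorter and more elementary for proving Theorem~A alone, and it sidesteps the $r$-independence question because checking the braid relation on generators \emph{is} the $r$-independence calculation when one unwinds $\tau^M$ recursively. What you lose is the structural payoff: the commutativity of \hyperref[WB]{(WB)} and \hyperref[WW]{(WW)}, which Theorem~\ref{thm:basedmod} proves for arbitrary-length words, is reused later (e.g.\ in the proof of Theorem~\ref{thm:indhom}) and is what lets the paper treat $\tau^M$ as a well-defined $B\wr\cH(d)$-module map rather than merely a generator-level recipe. You should also state more carefully that the sufficiency direction needs the extended-word version: checking (M2)--(M7) on simple reflections gives the relations, but you must still argue (as you do implicitly by appealing to triangularity) that any two reduced expressions of a product of $H_i$'s act identically, which is where the paper's length induction does its real work.
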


\rmk
Recall that Elias' Hecke-type category is a certain monoidal category described in terms of Khovanov--Lauda calculus, in the sense that the generators are given by crossings and boxes in various colors.
It was shown in \cite{E22} that a given morphism space of such a category has a PBW basis if and only if each of the ambiguities in the symmetric category and in the following is resolvable, for each possible coloring:
\eq\label{pic:Elias}
  \begin{tikzpicture}
  \pic[
    line width=1pt,
    braid/.cd,
    strand 3/.style={white},
    strand 6/.style={white},
    strand 10/.style={white},
    strand 11/.style={white},
    line cap=round,
    width=1cm,
    crossing height=.5cm,
  ]  at (0,0) {braid={
  s_1-s_4-s_7
  s_8
  s_4-s_7
  s_{10}
  }};
  \draw[black, fill = white] (-0.2,-1.2) rectangle (1.2, -1.6);
  \draw[black, fill = white] (-0.2,-1.9) rectangle (1.2, -2.3);
  \draw[black, dashed, rounded corners = 3mm] (-0.4,-2.4) rectangle (1.4, -1);
  \draw[black, dashed, rounded corners = 3mm] (-0.3,-1.8) rectangle (1.3, -.2);
  \draw[black, fill = white] (2.8, -1.9) rectangle (4.2, -2.3);
  \draw[black, dashed, rounded corners = 3mm] (2.6,-2.4) rectangle (4.4, -1);
  \draw[black, dashed, rounded corners = 3mm] (2.7,-1.8) rectangle (4.3, -.2);
  \draw[black, fill = white] (5.8,-1.9) rectangle (8.2, -2.3);
  \draw[black, dashed, rounded corners = 3mm] (5.6,-2.4) rectangle (8.4, -1.3);
  \draw[black, dashed, rounded corners = 3mm] (5.7,-1.8) rectangle (8.3, -.2);
\end{tikzpicture}
\endeq
In our convention, the resolvability of diagrams \eqref{pic:Elias} not in the symmetric category are equivalent to that \eqref{def:P2}, \eqref{def:P4}, and \eqref{def:P5}--\eqref{def:P7} hold, respectively.

If one introduces additional ingredient (i.e.,  the  shaded boxes below) to the Khovanov--Lauda calculus which correspond to elements in wreath modules,
then conditions \eqref{def:M2}, \eqref{def:M4}, and \eqref{def:taubr1}--\eqref{def:taubr3} correspond respectively to the resolvability of the following diagrams, for each possible coloring:
\eq\label{pic:basedmod}
  \begin{tikzpicture}
  \pic[
    line width=1pt,
    braid/.cd,
    strand 3/.style={white},
    strand 6/.style={white},
    strand 10/.style={white},
    strand 11/.style={white},
    line cap=round,
    width=1cm,
    crossing height=.5cm,
  ]  at (0,0) {braid={
  s_1-s_4-s_7
  s_8
  s_4-s_7
  s_{10}
  }};
  \draw[black, fill = white] (-0.2,-1.2) rectangle (1.2, -1.6);
  \draw[black, fill = gray] (-0.2,-1.9) rectangle (1.2, -2.3);
  \draw[black, dashed, rounded corners = 3mm] (-0.4,-2.4) rectangle (1.4, -1);
  \draw[black, dashed, rounded corners = 3mm] (-0.3,-1.8) rectangle (1.3, -.2);
  \draw[black, fill = gray] (2.8, -1.9) rectangle (4.2, -2.3);
  \draw[black, dashed, rounded corners = 3mm] (2.6,-2.4) rectangle (4.4, -1);
  \draw[black, dashed, rounded corners = 3mm] (2.7,-1.8) rectangle (4.3, -.2);
  \draw[black, fill = gray] (5.8,-1.9) rectangle (8.2, -2.3);
  \draw[black, dashed, rounded corners = 3mm] (5.6,-2.4) rectangle (8.4, -1.3);
  \draw[black, dashed, rounded corners = 3mm] (5.7,-1.8) rectangle (8.3, -.2);
\end{tikzpicture}
\endeq
While it is easy to derive the conditions \eqref{def:M2}--\eqref{def:taubr3} from the resolvability of diagrams, proving the opposite direction is highly-nontrivial.
The proof requires a more involved version of the grand loop argument in \cite{LNX24} in which implications of statements are done by a careful verification of commutativity of diagrams (see Section \ref{sec:app} for details).
\endrmk

\subsection{Applications of the Wreath Module Construction}
\subsubsection{Hu algebras and the Ginzburg--Guay--Opdam--Rouquier Conjecture}
{For the (generalized) Hu algebra $B \wr \cH(d)$ with $B = \cH_q(\Sigma_m)$ and $d \geq 2$,
the wreath modules have been used to construct the corresponding Specht and dual Specht modules, 
whose homological properties play a crucial role in the construction of 1-faithful quasi-hereditary cover.
Such a construction is available is an earlier version of this paper, which is now split into the current version and the third paper \cite{LNX26} with a focus on solving the conjecture on rational Cherednik algebras by Ginzburg, Guay, Opdam and Rouquier \cite{GGOR03}.
}
\subsubsection{Kashiwara--Miwa--Stern Tensor Spaces and (Anti)spherical Modules}
{
For the (skew) polynomial quantum wreath products $B \wr \cH(d)$ where $B$ is of the following form, for some $K$-algebra $F$:
\[
F[x], 
\quad
F[x^{\pm1}],
\quad
F \rtimes K[x],
\quad
\textup{and}
\quad
F \rtimes K[x^{\pm1}],
\]
a special representation that generalizes the Kashiwara--Miwa--Stern (KMS) tensor space \cite{KMS95} has been constructed in \cite{LM25,BL26}, using parabolic inductions and wreath modules of the form $M \wr S^{(d)}$ or $M \wr S^{(1^d)}$. 
Consequently, this quantum wreath product construction leads to the (new) notion of the spherical and antispherical modules.
In particular, these modules are constructed for the metaplectic pro-$p$ Iwahori--Hecke algebras \cite{V16,GGK24} for the $p$-adic group $\GL_d(F)$ over a non-Archimedian local field $F$, for the first time.

We remark that a similar analysis applies to the affine zigzag algebras \cite{KM19} and
certain affine Frobenius Hecke algebras due to Rosso and Savage \cite{RS20}.
We expect that our explicit realization will be useful in obtaining information regarding the corresponding Whittaker models, see \cite{BL26}.
}
\subsubsection{Construction of Simple Modules}
{
Let $B \wr \cH(d)$ be a finite dimensional quantum wreath product which is split semisimple over $K$. 
For all the quantum wreath product that we are aware of (see Section \ref{sec:WMQWP} for details), we observe that any simple $B\wr \cH(d)$-module can be realized as a parabolic induced module from a tensor product of wreath modules.
Moreover, the complete set of non-isomorphic simple $B\wr\cH(d)$-modules is indexed by the following set  of set-indexed multipartitions of size $d$:
\begin{equation}    
I_{B\wr \cH(d)}:= \left\{ \bblambda: I_B \to \Pi 
~\middle|~ 
\sum\nolimits_{\lambda \in I_B} 
\#\bblambda(\lambda) = d
\right\},
\end{equation}
where $\Pi$ is the set of all partitions and the set of non-isomorphic simple $B$-modules is $\{L_\lambda ~|~ \lambda \in I_B\}$.

Now, given $\bblambda \in I_{B \wr \cH(d)}$, we describe the construction of the simple module $L(\bblambda)$:
\begin{itemize}
\item Step 1: Suppose that the support of $\bblambda$ is $\{\lambda^{(1)}, \dots, \lambda^{(r)}\} \subseteq I_B$. 
This means that we will build $L(\bblambda)$ from the wreath modules
$L_{\lambda^{(i)}}\wr N_i$
for some module $N_i$ to be described.

\item Step 2: For all $i$, write $\nu_i := \bblambda(\lambda^{(i)})$.
Note that $\nu_i$ is a partition of some integer $\mu_i \leq d$.
We then set $N_i = S^{\nu_i}_{\chi_i}$ to be the Specht module (see Section \ref{sec:tHASpecht}) over the twisted Hecke algebra $\tcH^{\chi_i}_{\mu_i}$ with relations determined by $\chi_i$.

\item Step 3: Since $\mu:=(\mu_1, \dots, \mu_r) \vDash d$ is a composition,
we may define a parabolic subalgebra $B\wr \cH(\mu) := (B\wr \cH(\mu_1))\otimes \dots \otimes (B\wr \cH(\mu_r))$ of $B\wr\cH(d)$.
Thus, the outer tensor product
$(L_{\lambda^{(1)}}\wr N_1) \boxtimes \dots \boxtimes (L_{\lambda^{(r)}}\wr N_r)
\in B\wr \cH(\mu)\textup{-mod}$
induces up to a $B\wr \cH(d)$-module  
\[
L(\bblambda) := \ind_{B\wr \cH(\mu)}^{B \wr \cH(d)}\big(
(L_{\lambda^{(1)}}\wr N_1) \boxtimes \dots \boxtimes (L_{\lambda^{(r)}}\wr N_r)
\big),
\]
which is the desired simple module corresponding to $\bblambda$.
Observe that one can construct the tensor product using different orders of elements in supp$(\bblambda)$, but the induced module are all isomorphic.
\end{itemize}



The case of the simple modules for the Hu algebras, the modified Ariki--Koike algebras, and 
the Yokonuma--Hecke algebras will be elaborated on in Section \ref{sec:WMQWP}.
We will also discuss the infinite dimensional counterparts for realizing simple modules of the (degenerate) affine Hecke algebras in Section \ref{sec:cuspidal}.}
\section{Basic Definitions and Motivating Examples}\label{sec:Wreath}
\subsection{Quantum Wreath Products} \label{sec:QWP}
Throughout this paper, let $K$ be a commutative ring. All algebras  are free modules over $K$, and all tensor products are considered to be over $K$, unless specified otherwise.
Let $B$ be an associative algebra, and let $d\geq 2$ be an integer.
By a choice of parameters, we mean a choice of a quadruple $Q = (R,S,\rho,\sigma)$ where $R, S \in B\otimes B$, $\rho, \sigma \in  \End_K(B\otimes B)$ such that $\sigma$ is an automorphism.

For $Z \in B\otimes B$ and $\phi \in \End_K(B\otimes B)$, define $Z_i \in B^{\otimes d}$ and $\phi_i \in \End_K(B^{\otimes d})$, respectively, by 
\eq\label{def:Xphii}
Z_i :=  1^{\otimes i-1} \otimes Z \otimes 1^{\otimes d-i-1},
\quad
\phi_i(b_1 \otimes \dots \otimes b_d) := b_1 \otimes \dots \otimes b_{i-1}\otimes  \phi(b_{i} \otimes b_{i+1}) \otimes b_{i+2} \otimes \dots \otimes b_{d}.
\endeq 
\Def
The {\em quantum wreath product} {with parameter $Q$} is the associative $K$-algebra generated by the algebra $B^{\otimes d}$ and $H_1, \dots, H_{d-1}$ such that the following conditions hold: for $1\leq k \leq d-2, 1\leq i \leq d-1, |j-i|\geq 2$,
\begin{align}
&\textup{(braid relations) } \label{def:BR}
& H_k H_{k+ 1} H_k = H_{k+ 1} H_k H_{k + 1},  \quad H_i H_j = H_j H_i,
\\
&\textup{(quadratic relations) }\label{def:QR}
&H_i^2 = {S_i}H_i + {R_i},
\\
&\label{def:WR}\textup{(wreath relations) }
& H_ib = \sigma_i(b)H_i + \rho_i(b) \quad ( b\in B^{\otimes d}).
\end{align}
\noindent Often times, we refer this algebra  as $B \wr_Q \cH(d)$, or $B \wr \cH(d)$ whenever it is convenient.
\endDef

A $K$-basis of $B \wr \cH(d)$ is called a {\em PBW} basis if it is of one of the following forms:
\eq\label{def:PBWbases}
\{ (b_{i_1} \otimes \dots\otimes b_{i_d}) H_w ~|~ i_j \in I, w\in \Sigma_d\},
\quad\textup{and}\quad
\{H_w (b_{i_1} \otimes \dots\otimes b_{i_d})  ~|~ i_j \in I, w\in \Sigma_d\},
\endeq
where $\{b_i ~|~ i\in I\}$ is a $K$-basis of $B$, and $H_w := H_{i_1} \dots H_{i_N}$ for a reduced expression $w = s_{i_1} \dots s_{i_N} \in \Sigma_d$. 
This element $H_w$ is well-defined since the braid relations hold.
It is proved in \cite[Theorem 3.3.1]{LNX24} that the existence of a PBW basis implies the existence of another PBW basis with reversed order.

In this paper, we make the following simplifying assumption:
\begin{align}
\label{asm:basis}&B\wr \cH(d) \textup{ has a PBW basis in the sense of \eqref{def:PBWbases}}.
\end{align}
The statement \eqref{asm:basis} above is equivalent (cf. \cite[Theorem 3.3.1]{LNX24}) to 
\[
\textup{Conditions }\eqref{def:P1} \textup{ -- } \eqref{def:P4}\textup{, and Conditions }\eqref{def:P5}\textup{ -- } \eqref{def:P9}\textup{ additionally if }d\geq 3.
\]
These conditions are given as follows:
\begin{align}
\tag{P1}&\label{def:P1}\sigma(1\otimes 1) = 1\otimes 1, 
\quad \rho(1 \otimes1)=0,
\\
\tag{P2}&\label{def:P2} \sigma(ab) = \sigma(a)\sigma(b),
\quad
\rho(ab) =  \sigma(a)\rho(b) + \rho(a)b,
\\
\tag{P3}&\label{def:P3}
\sigma(S)S + \rho(S) + \sigma(R) = S^2+R,
\quad
\rho(R) + \sigma(S)R = SR,
\\
\tag{P4}&\label{def:P4}
r_S\sigma^2+\rho\sigma+\sigma\rho = {l}_{S}\sigma,
\quad
r_R\sigma^2+\rho^2 = {l}_S\rho + {l}_R,
\end{align}
where  ${l_X}, r_X$ for $X\in B\otimes B$ are $K$-endomorphisms defined by  {left and} right multiplication in $B\otimes B$ by $X$, respectively. {The next five conditions will only be necessary when $d\geq 3$:}
\begin{align}
\tag{P5}&\label{def:P5}\sigma_i\sigma_j\sigma_i = \sigma_j \sigma_i \sigma_j,
\quad \rho_i\sigma_j\sigma_i = \sigma_j \sigma_i \rho_j,
\\
\tag{P6}&\label{def:P6}
\rho_i\sigma_j\rho_i = r_{S_j} \sigma_j \rho_i \sigma_j + \rho_j \rho_i \sigma_j + \sigma_j \rho_i \rho_j,
\\
\tag{P7}&\label{def:P7}\rho_i\rho_j\rho_i + r_{R_i} \sigma_i \rho_j \sigma_i
= \rho_j\rho_i\rho_j + r_{R_j} \sigma_j \rho_i \sigma_j,
\end{align}
where $\{i,j\}= \{1,2\}$, $r_X$ for $X\in B^{\otimes 3}$ is understood as right multiplication in $B^{\otimes 3}$ by $X$. 
\begin{align}
\tag{P8}&\label{def:P8}&S_i = \sigma_j\sigma_i(S_j), \quad R_i = \sigma_j\sigma_i(R_j),
\quad
\rho_j\sigma_i(S_j) = 0 = \rho_j\sigma_i(R_j),
\\
\tag{P9}&\label{def:P9}& \sigma_j\rho_i(S_j) S_j +\rho_j\rho_i(S_j) + \sigma_j\rho_i(R_j) = 0 =  \rho_j\rho_i(R_j) + \sigma_j\rho_i(S_j) R_j,
\end{align}
where $\{i,j\}= \{1,2\}$. 



\subsection{Parabolic Subalgebras}

For the purposes of this paper, it will be useful to define parabolic subalgebras for quantum wreath products.  

\Def\label{def:Hpar}

For a composition $\mu \vDash d$, denote the corresponding {\em parabolic subalgebra} of $B \wr \cH(d)$ by
\[
B \wr \cH(\mu) := \< B^{\otimes d}, H_w~|~ w \in \Sigma_\mu\> \subseteq B \wr \cH(d).
\]
The tensor product $(B \wr \cH(\mu_1)) \otimes \dots \otimes (B\wr \cH(\mu_r))$ among quantum wreath products of the same parameter $Q$ is canonically isomorphic to the parabolic subalgebra $B\wr \cH(\mu)$ via $(b_1 H_{w_1})\otimes \dots \otimes (b_r H_{w_r}) \mapsto (b_1\otimes \dots \otimes b_r) H_{w_1 \dots w_r}$ where we used the identification $\Sigma_{\mu_1}\times \dots \times \Sigma_{\mu_r} \cong \Sigma_\mu$.

\subsection{Hu Algebras as Quantum Wreath Products}
In this subsection, assume that $K$ is a field of characteristic not equal to 2 containing {$v$ and} $q = v^2$.
Let $B = \cH_q(\Sigma_m)$ be the Hecke algebra for $\Sigma_m$ with generators $I_1, \dots I_{m-1}$ subject to the braid relations and the following normalized quadratic relations:
\eq
(I_i + v\inv)(I_i - v) = 0,
\quad
\textup{or {equivalently},}
\quad
I_i^2 = (v-v\inv) I_i +1.
\endeq
In particular, the bar-involution on $B$ sends each $I_{w}$ to $\overline{I}_w := I_{w\inv}\inv$.
Let $\iota$ be the identification $\cH_q(\Sigma_m \times \Sigma_m) \equiv B\otimes B$.
\begin{prop} [{\cite[Proposition~6.2.2, Theorem 6.3.3]{LNX24}}]\label{prop:H1}
Suppose that  $f_{2m}(q):= \prod_{i=0}^{2m-1}(1+q^i)$ is nonzero.
Then, $Z_m := \iota(H_1^2) \in Z(B \otimes B)$, where 
\eq
H_1 = v^{m(2m-1)}{\textstyle \sum_{\epsilon = (\epsilon_1, \dots, \epsilon_m) \in \{+,-\}^m}  
I_{m\to 2m-1}^{\epsilon_1}
\dots
I_{1\to m}^{\epsilon_m} C_\epsilon},
\endeq
where, for integers $a < b$: 
\eq
I_{a\to b}^{\epsilon_i} := 
\begin{cases}
I_a I_{a+1} \dots I_b &\tif \epsilon_i = +;
\\
I\inv_a I\inv_{a+1}\dots I\inv_b &\tif \epsilon_i = -,
\end{cases}
\quad
C_\epsilon = \prod_{i;\epsilon_i=-}  I_{2m-i} \dots  I_{m+1} I_{m+1 \dots}I_{2m-i}.
\endeq
\end{prop}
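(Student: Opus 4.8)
The plan is to prove both assertions by computing inside $\cH_q(\Sigma_{2m})$, using the identification $\iota$ of $B\otimes B$ with the Young subalgebra $\cH_q(\Sigma_m\times\Sigma_m)$ generated by the ``first block'' $I_1,\dots,I_{m-1}$ and the ``second block'' $I_{m+1},\dots,I_{2m-1}$. Let $\sigma$ be the automorphism of $\cH_q(\Sigma_m\times\Sigma_m)$ that swaps the two blocks, $\sigma(I_j)=I_{j+m}$ for $1\le j\le m-1$; it is the restriction of conjugation by the (length-preserving) block-swap permutation $c_0\in\Sigma_{2m}$, which normalises the Young subgroup, so $\sigma^2=\mathrm{id}$. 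The core of the argument is the pair of structural identities
\[
H_1\,x=\sigma(x)\,H_1+\rho(x)\ \ \bigl(x\in\cH_q(\Sigma_m\times\Sigma_m)\bigr),\qquad H_1^2\in\cH_q(\Sigma_m\times\Sigma_m),
\]
for a $K$-linear map $\rho$ of the Young subalgebra into itself; in quantum-wreath terms these say that the parameter $\sigma$ of $B\wr\cH(2)$ is this block-swap and that $S=0$. Note that the second identity is what makes $\iota(H_1^2)$ meaningful in the first place.

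First I would establish the two structural identities by a direct computation in $\cH_q(\Sigma_{2m})$. The $\epsilon=(+,\dots,+)$ term of the sum defining $H_1$ is, up to the normalising power of $v$, the Hecke-basis element indexed by $c_0$: the word $I_{m\to 2m-1}\,I_{m-1\to 2m-2}\cdots I_{1\to m}$ has $m^2=\ell(c_0)$ letters and is reduced, so this leading summand already implements $\sigma$, while the remaining $2^m-1$ summands — in which some of the $m$ strands carry inverted generators $I^{-1}$, weighted by $C_\epsilon$ — are the quantum corrections. One then pushes $H_1$ past each block generator $I_j$ (for $j\ne m$) one strand at a time, using only braid and quadratic relations, and verifies that the outcome is $\sigma(I_j)\,H_1$ plus an element of the Young subalgebra; similarly one multiplies $H_1$ by itself and checks that every term $H_w$ with $w\notin\Sigma_m\times\Sigma_m$ cancels. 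The normalisation $v^{m(2m-1)}$ and the exact shape of the $C_\epsilon$ are precisely what force these cancellations, and the hypothesis $f_{2m}(q)=\prod_{i=0}^{2m-1}(1+q^i)\ne 0$ — equivalently $q^i\ne-1$ for $0\le i\le 2m-1$ — is what makes the scalar factors encountered along the way invertible, so that $H_1$ is correctly normalised and in particular nonzero. This is Hu's computation from \cite{Hu02}, recast in the present framework in \cite{LNX24}.

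Granting the two identities, centrality is formal. For $x\in\cH_q(\Sigma_m\times\Sigma_m)$, using $\sigma^2=\mathrm{id}$,
\begin{align*}
H_1^2\,x &= H_1\bigl(\sigma(x)H_1+\rho(x)\bigr)=\bigl(xH_1+\rho\sigma(x)\bigr)H_1+\sigma\rho(x)H_1+\rho^2(x)\\
&= x\,H_1^2+\bigl(\rho\sigma(x)+\sigma\rho(x)\bigr)H_1+\rho^2(x).
\end{align*}
Both $H_1^2\,x$ and $x\,H_1^2+\rho^2(x)$ lie in $\cH_q(\Sigma_m\times\Sigma_m)$, so comparing $H_1$-components, via the PBW property of $B\wr\cH(2)$, forces $\rho\sigma+\sigma\rho=0$ (this also follows from condition \eqref{def:P4}); hence $[H_1^2,x]=\rho^2(x)$, and it remains only to see $\rho^2=0$. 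If $\rho=0$ this is immediate; otherwise one checks $\rho^2=0$ either directly from the explicit $\rho$ produced above, or conceptually, by observing that the first identity forces $\iota(H_1^2)$ to be $\sigma$-invariant and to commute with each block's Hecke generators, which, by the description of $Z(\cH_q(\Sigma_m))$ as the symmetric functions in the Jucys--Murphy elements, puts it in $Z(\cH_q(\Sigma_m))\otimes Z(\cH_q(\Sigma_m))=Z(B\otimes B)$. Either way $\iota(H_1^2)\in Z(B\otimes B)$.

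I expect the main obstacle to be the verification of the two structural identities — carefully tracking how the $2^m$-term expression for $H_1$, with its $C_\epsilon$-weights, multiplies past the block generators and against itself in $\cH_q(\Sigma_{2m})$. This is lengthy but essentially mechanical; the delicate points are the combinatorial bookkeeping of the $C_\epsilon$ and pinning down exactly where the hypothesis $f_{2m}(q)\ne 0$ is used (it underwrites the invertibility of the product of $(1+q^i)$-factors governing the normalising scalars, hence the nondegeneracy of $H_1$).
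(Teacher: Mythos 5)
Your proposal correctly identifies the shape of the argument — verify the two structural identities (that the explicit $H_1$ normalises the Young subalgebra via $H_1x=\sigma(x)H_1$, with $\rho=0$, and that $H_1^2$ lands inside $\cH_q(\Sigma_m\times\Sigma_m)$), and then read off centrality — and correctly points out that the real work is the Hu/LNX24 computation with the $C_\epsilon$-weighted sum. Note that the paper itself does not prove this statement; it is recalled verbatim from \cite[Prop.~6.2.2, Thm.~6.3.3]{LNX24}. Once the structural identities are in hand and one knows $\rho=0$ for the Hu choice of parameters, the deduction is immediate: $H_1^2x=H_1\sigma(x)H_1=\sigma^2(x)H_1^2=xH_1^2$.

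The place where your write-up goes astray is the intermediate deduction in which you keep $\rho$ general. You conclude $\rho\sigma+\sigma\rho=0$ ``by comparing $H_1$-components via the PBW property of $B\wr\cH(2)$,'' and remark this ``also follows from condition \eqref{def:P4}.'' But \eqref{asm:basis} says PBW for $B\wr\cH(2)$ is \emph{equivalent} to \eqref{def:P1}--\eqref{def:P4}, and for $S=0$, $\sigma^2=\id$, the first half of \eqref{def:P4} is precisely $\rho\sigma+\sigma\rho=0$; you cannot invoke PBW to extract a condition PBW presupposes. Worse, the second half of \eqref{def:P4} under $S=0$, $\sigma^2=\id$ reads $\rho^2=l_R-r_R$, i.e.\ $\rho^2(x)=[R,x]$ with $R=Z_m$, so ``$\rho^2=0$'' and ``$Z_m$ is central'' are literally the same assertion — neither the PBW appeal nor the Jucys--Murphy ``conceptual'' fallback breaks this circle. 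These detours are harmless here only because the deferred computation shows $\rho=0$ outright, after which nothing else need be argued. If you intend the argument to apply beyond the Hu case ($\rho\ne0$), you must establish $\rho^2=0$ independently of centrality of $R$, which your sketch does not do.
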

\exa
For $m=1$, the central element is $Z_1 = (v(I_1 + I_1\inv))^2 = v^2 (v+v\inv)^2 = (q+1)^2$. That is, a square of quantum two. For $m=2$,
\eq
 Z_2=v^{12}((v-v\inv)^2f(v) I_{1}\otimes I_1
+(v-v\inv)( f(v) + 2(v^2 + v^{-2}))(I_1\otimes 1 +1\otimes I_1)
+2f(v)),
\endeq
where $f(v) := v^4+2v^2-2+2v^{-2}+v^{-4}$.
While $Z_m$ does not seem to have a simple closed formula, we will see that the $Z_m$-action on any pair $S_m^\lambda \otimes S_m^\lambda$ of Specht modules over $\cH_q(\Sigma_m)$ by an explicit scalar multiplication.
\endexa

The Hu algebra is the quantum wreath product $A_q(m) = B\wr \cH(2)$ with {$B = \cH_q(\Sigma_m)$ and} the following choices {of parameters}:
\eq\label{par:Hu}
S=0,
\quad
R = Z_m,
\quad 
\sigma: a\otimes b \mapsto b\otimes a,
\quad
\rho = 0.
\endeq
In other words, $A_q(m)$ is generated by $B^{\otimes 2}$ and $H_1$ subject to the following relation:
\eq
H_1^2 = Z_m,
\quad H_1(a \otimes b) = (b \otimes a) H_1,
\qquad
(a,b \in B).
\endeq
\subsection{Representation Theory of Hu Algebras}\label{sec:RTHu}
Consider the Hu algebra  $A_q(m)$. For $\lambda\vdash m$, denote by $S_m^\lambda$ the Specht module over $\cH_q(\Sigma_m)$, and by $D_m^\lambda$ the quotient of $S_m^\lambda$ by its radical.
Let $\lambda', \lambda'' \vdash m$. It follows from \cite[Lemmas 3.2 and 3.4]{Hu02} that $Z_m$ acts on $S_m^{\lambda'} \otimes S_m^{\lambda''}$ by multiplication by a scalar which we call $f_{\lambda', \lambda''} \in \QQ(v)$. If, additionally, $\lambda' = \lambda = \lambda''$, then $\sqrt{f_{\lambda}} := \sqrt{f_{\lambda,\lambda}}$ is well-defined {(up to a sign)} and lies in $\ZZ[v, v\inv]$ thanks to \cite[Theorem 3.5]{Hu02}.

The complete set of Specht and simple modules over the Hu algebra is constructed explicitly.
For bipartition $(\lambda', \lambda'')$ of $(m,m)$, define
\eq
\begin{split}
&S(\lambda', \lambda'') := \ind_{B\otimes B}^{A_q(m)}(S_m^{\lambda'} \otimes S_m^{\lambda''}),
\quad 
D(\lambda', \lambda'') := \ind_{B\otimes B}^{A_q(m)}(D_m^{\lambda'} \otimes D_m^{\lambda''}).
\end{split}
\endeq
Note that when $\lambda' = \lambda = \lambda''$, the induced modules split into two summands, i.e.,
\eq
\ind_{B\otimes B}^{A_q(m)}(S_m^{\lambda} \otimes S_m^{\lambda}) = S(\lambda, \lambda)_{+} \oplus S(\lambda, \lambda)_{-},
\quad
\ind_{B\otimes B}^{A_q(m)}(D_m^{\lambda} \otimes D_m^{\lambda}) = D(\lambda, \lambda)_{+} \oplus D(\lambda, \lambda)_{-}.
\endeq
where 
\eq\label{def:Sllpm}
\begin{split}
&S(\lambda, \lambda)_{\pm} := \textup{Span}_K\{\sqrt{f_{\lambda}} (x\otimes y) \pm (x\otimes y)H_1 ~|~x \in S_m^{\lambda'}, \ y\in S_m^{\lambda''}\},
\\
&D(\lambda, \lambda)_{\pm} := \textup{Span}_K\{\sqrt{f_{\lambda}} (x\otimes y) \pm (x\otimes y)H_1 ~|~x \in D_m^{\lambda'}, \ y\in D_m^{\lambda''}\}.
\end{split}
\endeq
\begin{prop}[{\cite[Theorem 4.5]{Hu02}}]\label{prop:RepHu}
Let $A_q(m) = B \wr \cH(2)$ be the Hu algebra. Then,
\begin{enua}
\item
There is a notion of the Specht modules over over $A_q(m)$, whose complete set of isomorphism classes is given by 
\[
\{S(\lambda',\lambda'') ~|~ \lambda' \neq \lambda'' \in \Pi_m\} \sqcup \{S(\lambda,\lambda)_+, S(\lambda,\lambda)_- ~|~ \lambda \in \Pi_m\}.
\]
\item
The complete set of isomorphism classes of simple modules over $A_q(m)$ is given by 
\[
\{D(\lambda',\lambda'') ~|~ \lambda', \lambda'' \in \Pi_m, \ D^{\lambda'}_m \neq 0 \neq D^{\lambda''}_m\}  \sqcup \{D(\lambda,\lambda)_+, D(\lambda,\lambda)_- ~|~ \lambda \in \Pi_m, \ D^{\lambda}_m \neq 0\}.
\]
\end{enua}
\end{prop}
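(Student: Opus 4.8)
The plan is to endow $A_q(m)$ with the structure of a cellular algebra (in the sense of Graham--Lehrer) whose cell modules are exactly the modules listed in part (a), and then to extract both statements from the general theory of cellular algebras. The starting point is the decomposition guaranteed by \eqref{asm:basis}: as a left (and as a right) $B\otimes B$-module, $A_q(m) = (B\otimes B)\oplus (B\otimes B)H_1$, and since here $S = 0$, $\rho = 0$, and $\sigma$ is the swap, all of the multiplication is determined by $H_1(a\otimes b) = (b\otimes a)H_1$ and $H_1^2 = Z_m$, with $Z_m$ central by Proposition \ref{prop:H1}. It follows that $\res_{B\otimes B}\ind_{B\otimes B}^{A_q(m)}(W)\cong W\oplus {}^{\sigma}W$, where ${}^{\sigma}W$ is $W$ pulled back along the swap; for $W = S_m^{\lambda'}\otimes S_m^{\lambda''}$ this is $S_m^{\lambda'}\otimes S_m^{\lambda''}\oplus S_m^{\lambda''}\otimes S_m^{\lambda'}$, and since $\ind$ of $W$ and of ${}^{\sigma}W$ are isomorphic as $A_q(m)$-modules, one has $S(\lambda',\lambda'')\cong S(\lambda'',\lambda')$, which explains why the index set in (a) consists of unordered pairs.

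For the cellular datum I would start from a Murphy-type cellular basis of $B = \cH_q(\Sigma_m)$ with its anti-involution $*$ fixing the generators, so that the tensor-square basis gives $B\otimes B$ a cellular structure with poset the bipartitions ordered by dominance and anti-involution $*\otimes *$. Because $S = 0$, $\rho = 0$, and $Z_m = H_1^2$ is $(*\otimes *)$-fixed, the rule $H_1\mapsto H_1$ extends $*\otimes *$ to an anti-involution of $A_q(m)$, and adjoining the elements $\xi$ and $\xi H_1$ (for $\xi$ running over the tensor-square basis) produces a cellular basis of $A_q(m)$ --- with one modification. When the bi-shape is $(\lambda,\lambda)$, the element $Z_m$ acts on $S_m^\lambda\otimes S_m^\lambda$ by the scalar $f_\lambda$, which is a nonzero square $f_\lambda = (\sqrt{f_\lambda})^2$ with $\sqrt{f_\lambda}\in\ZZ[v,v^{-1}]\subseteq K$ by \cite[Theorem 3.5]{Hu02}; here one replaces $\xi$ and $\xi H_1$ by $\sqrt{f_\lambda}\,\xi \pm \xi H_1$ and refines the poset by splitting the node $\lambda$ into two incomparable nodes $\lambda_{\pm}$. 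This substitution is an invertible change of basis precisely because $\ch K\neq 2$ and $f_\lambda\neq 0$, and since the two factors of $X^2 - f_\lambda = (X-\sqrt{f_\lambda})(X+\sqrt{f_\lambda})$ are then coprime, $H_1$ acts diagonalizably on $\ind_{B\otimes B}^{A_q(m)}(S_m^\lambda\otimes S_m^\lambda)$ with eigenspaces exactly $S(\lambda,\lambda)_{\pm}$.

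With this cellular structure in hand, the cell modules are precisely the modules in part (a); part (a) then amounts to their pairwise non-isomorphism, which I would check by restricting to $B\otimes B$: distinct unordered pairs $\{\lambda',\lambda''\}$ are separated by the classification of Specht modules of $\cH_q(\Sigma_m)$ together with the swap identification above, while $S(\lambda,\lambda)_+\not\cong S(\lambda,\lambda)_-$ because $H_1$ acts by the distinct scalars $\pm\sqrt{f_\lambda}$ on the respective $Z_m$-isotypic components. Part (b) is then the Graham--Lehrer classification applied to this cellular structure: the simple modules are the nonzero heads $D(\cdot) = S(\cdot)/\Rad\langle\ ,\ \rangle$ of the cell modules with respect to their cellular bilinear forms. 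Computing these forms from the natural form on $S_m^\lambda$, the radical on $S(\lambda',\lambda'')$ is induced from $\Rad(S_m^{\lambda'})\otimes S_m^{\lambda''} + S_m^{\lambda'}\otimes\Rad(S_m^{\lambda''})$, so $D(\lambda',\lambda'')\cong \ind_{B\otimes B}^{A_q(m)}(D_m^{\lambda'}\otimes D_m^{\lambda''})$ is nonzero iff $D_m^{\lambda'}\neq 0\neq D_m^{\lambda''}$; likewise on $S(\lambda,\lambda)_{\pm}$ one gets $D(\lambda,\lambda)_{\pm}$, nonzero iff $D_m^\lambda\neq 0$. Enumerating the nonzero ones yields exactly the list in (b).

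The step I expect to be the main obstacle is the verification of the straightening (cellularity) relations for the mixed basis $\{\xi,\xi H_1\}$ --- in particular, that conjugating a Murphy straightening relation in $B\otimes B$ past $H_1$ via $H_1(a\otimes b) = (b\otimes a)H_1$ and $H_1^2 = Z_m$ stays in the span of lower terms, and that the $\pm$-refined order is a legitimate cell order; this is where the hypotheses $f_{2m}(q)\neq 0$ (making $Z_m$ central with the expected scalars) and $\ch K\neq 2$ are used in an essential way. One shortcut worth recording: since $A_q(m)\cong\cH_q(\Sigma_m)\wr\cH(2)$, the modules in (a) coincide with the parabolic inductions $\ind_{B\wr\cH(\mu)}^{B\wr\cH(2)}\bigl(\bigotimes_i S_m^{\lambda^{(i)}}\wr N^{\nu^{(i)}}\bigr)$ of tensor products of wreath modules (with $N^{\nu}$ a Specht module over the twisted Hecke algebra $\tcH$, the $\pm$-split being $\nu = (2)$ versus $\nu = (1,1)$), so once the constructions of Sections 3--6 are available the statement also follows as a special case of the general wreath-module formalism, giving an independent confirmation.
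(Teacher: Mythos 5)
The paper does not prove this proposition; it is cited verbatim from Hu's Theorem 4.5, and the wreath-module reinterpretation is only recorded later (Corollary \ref{cor:gHuIrrep}).  So there is no internal proof to compare against, and your cellular-algebra approach is a genuinely independent route.  That said, several steps in your sketch are not merely incomplete but wrong as stated, and they need to be corrected before the cellularity verification you flag as the main obstacle can even begin.

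First, $S(\lambda,\lambda)_{\pm}$ are \emph{not} the $\pm\sqrt{f_\lambda}$-eigenspaces of the operator $H_1$ on $\ind_{B\otimes B}^{A_q(m)}(S^\lambda_m\otimes S^\lambda_m)$.  Those eigenspaces are the spans of $\{\sqrt{f_\lambda}(1\otimes w)\pm H_1\otimes w\}$, and acting by $b_1\otimes b_2$ sends $w\mapsto (b_1\otimes b_2)w$ in the first tensor factor but $w\mapsto (b_2\otimes b_1)w$ in the second, so these subspaces are not $B\otimes B$-stable.  The actual $A_q(m)$-submodules are $\{\sqrt{f_\lambda}(1\otimes w)\pm H_1\otimes\theta(w)\}$ with $\theta$ the flip on $S^\lambda_m\otimes S^\lambda_m$; equivalently, in the wreath-module picture of Example \ref{ex:HuClifford}, $H_1$ acts on $S(\lambda,\lambda)_{\pm}$ by $\pm\sqrt{f_\lambda}$ \emph{times the flip}, not by the scalar $\pm\sqrt{f_\lambda}$.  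Consequently your non-isomorphism argument (``$H_1$ acts by the distinct scalars $\pm\sqrt{f_\lambda}$'') also does not work; the correct argument is that an $A_q(m)$-isomorphism $S(\lambda,\lambda)_+\to S(\lambda,\lambda)_-$ would be a $B\otimes B$-endomorphism of $S^\lambda_m\otimes S^\lambda_m$ that \emph{anti}commutes with the flip, which is impossible when $\operatorname{End}_{B\otimes B}(S^\lambda_m\otimes S^\lambda_m)\cong K$ — a hypothesis you should surface.

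Second, the proposed basis $\{\sqrt{f_\lambda}\,\xi\pm\xi H_1\}$ is not stable under the anti-involution $*\otimes *$, since $(\xi H_1)^{*}=H_1\xi^{*}=\sigma(\xi^{*})H_1$ picks up the $\sigma$-twist of the Murphy indices; for $\xi=C^{(\lambda,\lambda)}_{(s,s'),(t,t')}$ one gets $\sigma(\xi^{*})=C^{(\lambda,\lambda)}_{(t',t),(s',s)}$, which is not $\xi^{*}$.  A correct candidate cellular element must carry a flip in one tensor slot, e.g.\ $\sqrt{f_\lambda}\,C^{(\lambda,\lambda)}_{(s,s'),(t,t')}\pm C^{(\lambda,\lambda)}_{(s',s),(t,t')}H_1$, which does satisfy $(\cdot)^{*}=(\text{same cell with }(s,s')\leftrightarrow(t,t'))$.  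The same $\sigma$-twist is what forces the off-diagonal cells to be indexed by \emph{unordered} pairs, which you correctly anticipate, but the row/column index sets and the basis formula must be set up to absorb the twist.  Your closing remark — that the proposition also falls out of the wreath-module formalism — is right and matches what the paper actually does in Corollary \ref{cor:gHuIrrep}.
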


\subsection{A New Viewpoint}\label{sec:newviewptHu}
In this section, we will preview a new setup which better describes the representation theory for the Hu algebras and other quantum wreath products.

\subsubsection{A Multipartition Approach}

Recall that $\Pi_m = \{ \lambda = (\lambda_1, \lambda_2, \dots) \vdash m\}$ is the set of partitions of $m$.
Let $\Pi := \bigcup_{m \geq 0} \Pi_m$ be the set of all ({possibly} empty) partitions. 
A multipartition $(\lambda^{(1)}, \dots, \lambda^{(n)})$ of $n$-parts can be identified as a map $\{1, \dots, n\} \to \Pi$, $i \mapsto \lambda^{(i)}$.

For our purposes, we need to label the Specht and simple modules with a new index set $\Omega$ consisting of ``multipartitions'' of the form $\bblambda: \Pi_m \to \Pi$ such that the sizes of all $\bblambda(\lambda)$, for $\lambda \in \Pi_m$, add up to $d$. 
It is sometimes useful to express these multipartitions $\bblambda$  as a sum of characteristic functions via
$\bblambda \equiv \sum_{\lambda \vdash m} e_\lambda^{\bblambda(\lambda)}$ where
\eq\label{def:charfcn}
e_\lambda^\nu(\lambda) := \nu,
\quad
e_\lambda^\nu(\gamma) := \varnothing 
\quad
\textup{for all }
\quad \gamma \in \Pi_m \setminus \{\lambda \}.
\endeq
For example, if $m = d = 2$, then
\[
\Pi_m = \{(2), (1,1)\}, 
\quad
\Omega = \{ e_{(2)}^{(2)}, \ e_{(2)}^{(1,1)},\ e_{(1,1)}^{(2)},\ e_{(1,1)}^{(1,1)},\ e_{(2)}^{(1)} + e_{(1,1)}^{(1)}\}.
\] 
\subsubsection{Wreath Modules}
The building blocks of a Specht module over $B \wr \cH(d)$ is the wreath module $M \wr N$.
We will show that $S^\lambda_m\wr N$ affords a module structure over the Hu algebra if and only if $N$ has a module structure over the 
{\em twisted Hecke algebra} $\tcH^\chi_d$  which we will discuss in Section \ref{sec:tHA}, for some twist $\chi = \chi_\lambda$. 
Moreover, $\tcH^\chi_d$ admits a family $\{S_{\chi}^\nu ~|~ \nu \vdash d\}$ of Specht modules.

{Given $\bblambda \in \Omega$, suppose that $\nu$ is in the image of $\bblambda$, i.e., $\nu =\bblambda(\lambda)$ for some $\lambda \vdash m$. This indicates that the wreath module $S^\lambda_m \wr S_{\chi_\lambda}^\nu$ will be used to construct $S^\bblambda$.}
When $d=1$, it is understood that $\tcH^{\chi}_1 \equiv K$ with a single Specht module $S_{\chi}^{(1)} \equiv K$.
Hence, the wreath module $S^\lambda_m \wr S_{\chi_\lambda}^{(1)} \equiv S^\lambda_m$ is the Specht module over $B$.
\subsubsection{Parabolic Inductions}
Suppose that the support of $\bblambda$ is $\{\lambda^{(1)}, \lambda^{(2)}, \dots, \lambda^{(r)}\}$.
Write $\mu_i := \# \lambda^{(i)}$ {to be the total number of boxes in the partition}, and hence $\mu = (\mu_1, \dots, \mu_r) \vDash d$. 
Since each $S^{\lambda^{(i)}}_m \wr S_{\chi_\lambda}^{\bblambda(\lambda^{(i)})}$ is a module over $B \wr\cH(\mu_i)$,
their tensor product is a module over the parabolic subalgebra $B\wr \cH(\mu)$.

Therefore, for each $\bblambda \in \Omega$ we can construct a Specht module 
\[
S^\bblambda = 
\ind_{\mu}^{2} \big( \bigotimes\nolimits_i (S_{m}^{\lambda^{(i)}} \wr S_{\chi_{\lambda^{(i)}}}^{\bblambda(\lambda^{(i)})}) \big),
\quad
(\textup{for distinct }\lambda^{(i)}\textup{'s}),
\]
where we use the shorthand notation
\eq\label{eq:ind}
\ind_\mu^d =\ind_{\mu_1, \mu_2, \dots, \mu_r}^d := \ind_{B \wr \cH(\mu)}^{B \wr \cH(d)},
\qquad
(\textup{for composition } \mu = (\mu_1, \dots, \mu_r) \vDash d).
\endeq
\subsubsection{An Identification}
We will see in Example \ref{ex:HuClifford} that the Specht modules in Section \ref{sec:RTHu} are identified as in the table below:
\[
\begin{array}{|c|ccc|ccc}
\hline
\textup{Specht module in \cite{Hu02}}&S(\lambda, \lambda)_+ &S(\lambda, \lambda)_- &S(\lambda', \lambda'')
\\
\hline
\textup{Multipartition }\bblambda&  e_\lambda^{(2)} & e_\lambda^{(1,1)} & e_{\lambda'}^{(1)} + e_{\lambda''}^{(1)}
\\
\hline
\textup{Our Specht module }S^\bblambda & S_m^\lambda \wr S^{(2)}_{\chi_\lambda} & S_m^\lambda \wr S^{(1,1)}_{\chi_\lambda} 
& \ind_{1,1}^2\big((S_m^{\lambda'} \wr S_{\chi'}^{(1)})\otimes (S_m^{\lambda''}\wr S_{\chi''}^{(1)}) \big)
\\
\hline
\end{array}
\]
Therefore, we can then paraphrase Proposition~\ref{prop:RepHu} as follows:
\setcounter{Def}{4}
\begin{cor}\label{cor:gHuIrrep}
Let $A_q(m)$ be the Hu algebra.
For $\lambda \vdash m$, let $\chi_\lambda$ be the twist given by  
$\chi_\lambda(S) = 0$, $\chi_\lambda(R) = f_{\lambda,\lambda}$.
Then,
\begin{enua}
\item
There is a notion of the Specht modules over over $A_q(m)$, whose complete set of isomorphism classes is given by 
\[
\{S^\bblambda ~|~ \bblambda \in \Omega\} =
\left\{
S^\lambda_m \wr S_{\chi_\lambda}^{(2)}, \
S^\lambda_m \wr S_{\chi_\lambda}^{(1,1)}, \
\ind_{1,1}^2(S^\lambda_m \otimes S^{\lambda'}_m)
~\middle|~ 
\lambda, \lambda' \in \Pi_m{, \lambda \neq \lambda'}
\right\}.
\]
\item
The complete set of isomorphism classes of simple modules over $A_q(m)$ is given by 
\[
\left\{
\ind_{\mu}^{2} \big( \bigotimes\nolimits_i (D_{m}^{\lambda^{(i)}} \wr S_{\chi_{\lambda^{(i)}}}^{\nu^{(i)}}) \big)
~\middle|~
\mu \vDash 2, 
\  \nu^{(i)} \vdash \mu_i,
\ \lambda^{(i)} \vdash m,
\ D_{m}^{\lambda^{(i)}} \neq 0
\right\},
\]
where $\lambda^{(i)}$'s are distinct partitions.
\end{enua}
\end{cor}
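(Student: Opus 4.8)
The statement to prove is Corollary~\ref{cor:gHuIrrep}, which paraphrases Proposition~\ref{prop:RepHu} (Hu's classification of Specht and simple modules over the Hu algebra $A_q(m)$) in the new language of wreath modules and parabolic inductions. The plan is to reduce everything to the identification table in Example~\ref{ex:HuClifford}, so the real content is establishing that table; once the dictionary is set up, the corollary is a transcription.

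\textbf{Step 1: Set up the twist and the twisted Hecke algebra in rank $\leq 2$.} First I would recall from Section~\ref{sec:RTHu} that $Z_m$ acts on $S_m^{\lambda'}\otimes S_m^{\lambda''}$ by the scalar $f_{\lambda',\lambda''}$, so that for the Hu parameters \eqref{par:Hu} (with $S=0$, $R=Z_m$, $\sigma$ the swap, $\rho=0$) the module $S_m^\lambda\otimes S_m^\lambda$ carries the twist $\chi_\lambda$ with $\chi_\lambda(S)=0$, $\chi_\lambda(R)=f_{\lambda,\lambda}$. By Theorem~\ref{thmA} (Main Theorem~\ref{thmA}), since $\sigma^M$ is the swap and $\rho^M=0$, conditions (M2)--(M7) hold trivially, so $S_m^\lambda$ is a \based module and $S_m^\lambda\wr N$ is an $A_q(m)$-module for every $\tcH_2^{\chi_\lambda}$-module $N$. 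I would then invoke the construction of Specht modules over $\tcH_d^\chi$ from Section~\ref{sec:tHA}: for $d=2$ there are exactly two, namely $S_{\chi_\lambda}^{(2)}$ and $S_{\chi_\lambda}^{(1,1)}$, which are one-dimensional with $h$ acting by $+\sqrt{f_{\lambda,\lambda}}$ and $-\sqrt{f_{\lambda,\lambda}}$ respectively (the quadratic relation being $(h-\sqrt{f_\lambda})(h+\sqrt{f_\lambda})=0$); for $d=1$ one has $\tcH_1^\chi\equiv K$ and $S_\chi^{(1)}\equiv K$.

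\textbf{Step 2: Match the wreath modules with Hu's modules.} This is the identification table, hence the heart of the proof. For the diagonal case $\lambda'=\lambda=\lambda''$: the wreath module $S_m^\lambda\wr S_{\chi_\lambda}^{(2)}$ has underlying space $(S_m^\lambda)^{\otimes 2}\otimes S_{\chi_\lambda}^{(2)}\cong (S_m^\lambda)^{\otimes 2}$, and unwinding the action \eqref{def:QWPAction}--\eqref{def:QWPAction} one checks that $H_1$ acts on $x\otimes y$ by $\sqrt{f_\lambda}\cdot(y\otimes x)$ — matching exactly the $H_1$-eigenvector description of $S(\lambda,\lambda)_+$ (and $S_{\chi_\lambda}^{(1,1)}$ with the sign flipped, matching $S(\lambda,\lambda)_-$). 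For the off-diagonal case $\lambda'\neq\lambda''$: I would compute $\ind_{1,1}^2\big((S_m^{\lambda'}\wr S_{\chi'}^{(1)})\otimes(S_m^{\lambda''}\wr S_{\chi''}^{(1)})\big)$ and observe, using the canonical identification of $B\wr\cH(\mu)$ in Definition~\ref{def:Hpar} and the fact that $S_m^\lambda\wr S_\chi^{(1)}\equiv S_m^\lambda$, that it equals $\ind_{B\otimes B}^{A_q(m)}(S_m^{\lambda'}\otimes S_m^{\lambda''})=S(\lambda',\lambda'')$. The same computation with $D_m^\lambda$ in place of $S_m^\lambda$ gives the simple-module identifications (noting that $D_m^\lambda$ is also a \based module with the same twist since $Z_m$ acts on $D_m^{\lambda'}\otimes D_m^{\lambda''}$ by the same scalar, being a subquotient-related action — more precisely $f$ is determined on the Specht module and passes to the simple quotient).

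\textbf{Step 3: Transcribe Proposition~\ref{prop:RepHu}.} With the table in hand, part (a) of the corollary is immediate: Proposition~\ref{prop:RepHu}(a) lists the Specht modules as $\{S(\lambda',\lambda'')\mid \lambda'\neq\lambda''\}\sqcup\{S(\lambda,\lambda)_\pm\}$, which by the table is precisely $\{S^\bblambda\mid\bblambda\in\Omega\}$ — here one uses the explicit description $\Omega=\{e_\lambda^{(2)},\,e_\lambda^{(1,1)},\,e_{\lambda'}^{(1)}+e_{\lambda''}^{(1)}\}$ for $d=2$ worked out after \eqref{def:charfcn}, checking the bijection $\bblambda\leftrightarrow$ (support data $\lambda^{(i)}$, partitions $\bblambda(\lambda^{(i)})$) of sizes summing to $2$. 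Part (b) follows the same way from Proposition~\ref{prop:RepHu}(b), rewriting $D(\lambda,\lambda)_\pm=D_m^\lambda\wr S_{\chi_\lambda}^{(2)}$ resp.\ $D_m^\lambda\wr S_{\chi_\lambda}^{(1,1)}$ and $D(\lambda',\lambda'')=\ind_{1,1}^2(D_m^{\lambda'}\otimes D_m^{\lambda''})$, and keeping the condition $D_m^\lambda\neq 0$; the distinctness of the $\lambda^{(i)}$ in the parabolic-induction description corresponds exactly to the case split ($\mu=(2)$ versus $\mu=(1,1)$ with distinct labels) in Hu's list.

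\textbf{Main obstacle.} The genuinely nontrivial point is Step~2 for the diagonal summands: verifying that the internal splitting $\ind_{B\otimes B}^{A_q(m)}(S_m^\lambda\otimes S_m^\lambda)=S(\lambda,\lambda)_+\oplus S(\lambda,\lambda)_-$ along the $\pm\sqrt{f_\lambda}$-eigenspaces of $H_1$ agrees on the nose with the wreath-module construction — i.e.\ that tensoring with the one-dimensional $\tcH_2^{\chi_\lambda}$-modules $S_{\chi_\lambda}^{(2)}$, $S_{\chi_\lambda}^{(1,1)}$ reproduces those two eigenspaces rather than some twist of them. This requires carefully tracing the structure map $\tau^M=\tau^M(r)$ and the choice map $r$ through the composition \eqref{def:QWPAction}, and confirming the sign conventions match Hu's in \cite{Hu02}; everything else is bookkeeping with the index set $\Omega$ and a direct appeal to Proposition~\ref{prop:RepHu}.
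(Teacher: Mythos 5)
Your proposal is correct and follows essentially the same route the paper takes: establish the dictionary of Example~\ref{ex:HuClifford} and the identification table in Section 2.5.4 (that $S(\lambda,\lambda)_\pm = S_m^\lambda\wr S_{\chi_\lambda}^{(2)}$ resp.\ $S_m^\lambda\wr S_{\chi_\lambda}^{(1,1)}$ and $S(\lambda',\lambda'')=\ind_{1,1}^2(S_m^{\lambda'}\otimes S_m^{\lambda''})$), via the \based-module check in Section~\ref{sec:gHu}, then transcribe Proposition~\ref{prop:RepHu}. One small remark: the worry in your "main obstacle" paragraph about tracing the choice map $r$ is moot here, since for $d=2$ the map $r$ is unique (as noted right after \eqref{def:tauMdiag}), so the only genuine check is the sign/eigenvalue match, which is the "direct calculation" the paper delegates to Section~\ref{sec:gHu}.
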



\section{Twisted Hecke Algebras}\label{sec:tHA}
For our constructions, it will be more convenient to describe the  modules in Section \ref{sec:RTHu} and others using a notation we call the wreath modules $M \wr N$, where $M$ is a module over $B\otimes B$ and $N$ is a free $K$-module such that $M \wr N = M^{\otimes d} \otimes N$ as $K$-modules.

While it is straightforward to define an action of $B^{\otimes d}$ on $M^{\otimes d}$ using the $B$-module structure of $M$, it is surprisingly not easy to determine what module structure one needs for $N$ so that $M \wr N$ becomes a module over the quantum wreath product.
We begin with studying a different quantum wreath product with base algebra being an endomorphism algebra.

\subsection{Twisted Hecke Algebras}\label{SS:twisted} 
Let $B \wr \cH(d)$ be a fixed quantum wreath product,
and let $M$ be a fixed left $B$-module.
Then $\End_B(M)$ is a (not necessarily  commutative) $K$-algebra.

\Def[Twisted Hecke algebras]\label{def:Hc}
Let $E \subseteq \End_B(M)$ be a commutative $K$-subalgebra. 
By a {\em twist} we mean a choice of pair of elements
\eq\label{def:twistchi}
\chi = (S^M, R^M) \in (E\otimes E)^2.
\endeq 
Hence, following \eqref{def:Xphii}, $S^M_i, R^M_i$ are elements in $E^{\otimes d}$ for all $i$. 
Define, given $\sigma$ and $\rho$ in $\End_K(E\otimes E)$, the {\em twisted Hecke algebra} $\tcH^\chi_d$ as the quantum wreath product $E \wr \cH(d)$ generated by $E^{\otimes d}$ and Hecke-like elements $h_1, \dots, h_{d-1}$, with $S = S^M$ and $R = R^M$.
In particular, the following holds for all $1\leq i< d$:
\eq\label{def:QRchi}
h_i^2 =S^M_i h_i + R^M_i.
\endeq
\endDef
While $\sigma$ and $\rho$ can be arbitrary for a twisted Hecke algebra, 
in this paper we make the following assumptions:
\eq\label{asm:twistchi}
E = K,
\quad
\sigma = \id, \quad 
\rho = 0,
\quad
S^M(m) = \chi(S) m,
\quad
R^M(m) = \chi(R) m,
\quad
(m \in M\otimes M),
\endeq
for some $\chi(S), \chi(R) \in K$.
In words, we assume that $S^M$ (resp., $R^M$) is a scalar multiplication by an element in $K$ that we call $\chi(S)$ (resp., $\chi(R)$).

Let $h_w := h_{i_1} \dots h_{i_N}$ for a reduced expression $w = s_{i_1} \dots s_{i_N} \in \Sigma_d$. 
\begin{lem}
Retain the assumptions \eqref{asm:twistchi}.
Then, $\{h_w ~|~ w \in \Sigma_d\}$ forms a $K$-basis of the twisted Hecke algebra $\tcH^\chi_d = E \wr \cH(d)$.
\end{lem}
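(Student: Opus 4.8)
The plan is to recognize that, under the assumptions \eqref{asm:twistchi}, the twisted Hecke algebra $\tcH^\chi_d = E \wr \cH(d)$ is a quantum wreath product whose defining quadruple $Q = (R^M, S^M, 0, \id)$ over the base algebra $E = K$ satisfies all of the PBW conditions \eqref{def:P1}--\eqref{def:P9}. Then the equivalence stated immediately after \eqref{def:P9} (this is the content of \cite{LNX24}) shows that \eqref{asm:basis} holds for $\tcH^\chi_d$, i.e.\ it has a PBW basis of the form \eqref{def:PBWbases}. Since $E = K$ admits the singleton $K$-basis $\{1\}$, such a PBW basis collapses to $\{H_w \mid w \in \Sigma_d\} = \{h_w \mid w \in \Sigma_d\}$, which is exactly the asserted basis. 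That the $h_w$ are well defined, independently of the chosen reduced expression, is already guaranteed by the braid relations \eqref{def:BR} via Matsumoto's theorem.

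First I would record the simplifications forced by \eqref{asm:twistchi}: the base algebra is $E = K$, so $E \otimes E = E^{\otimes d} = K$; consequently each $S_i$ equals the scalar $\chi(S)$ and each $R_i$ equals $\chi(R)$, independently of $i$; moreover $\sigma = \id$ and $\rho = 0$, and, $K$ being commutative, $l_X = r_X$ for every $X$. With these reductions, \eqref{def:P1} and \eqref{def:P2} assert only that $\id$ is a unital algebra endomorphism and that $0$ is the trivial $\sigma$-derivation; \eqref{def:P3} becomes the tautologies $S^2 + R = S^2 + R$ and $SR = SR$; and \eqref{def:P4} becomes $r_S = l_S$ and $r_R = l_R$, which hold by commutativity.

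For $d \geq 3$ I would then dispatch \eqref{def:P5}--\eqref{def:P9}. The first relation of \eqref{def:P5} is the identity $\id = \id$; every other monomial appearing in \eqref{def:P5}, \eqref{def:P6}, \eqref{def:P7} contains a factor $\rho_i$ or $\rho_j$, which is $0$, so both sides vanish. In \eqref{def:P8} and \eqref{def:P9} one uses, besides $\rho_i = \rho_j = 0$, that $S_1 = S_2$ and $R_1 = R_2$ on the three relevant adjacent strands (both being the same scalar), so that $\sigma_j\sigma_i(S_j) = S_j = S_i$ and similarly for $R$; thus these conditions hold vacuously. The one spot calling for care — the closest thing to an obstacle here — is exactly this: \eqref{def:P8}--\eqref{def:P9} are phrased as genuine constraints relating the strand-local elements $S_i, R_i$, and one must note explicitly that in the present degenerate situation these are strand-independent scalars in $K$, so nothing is actually being imposed.

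Having verified \eqref{def:P1}--\eqref{def:P9}, I would conclude by invoking \cite{LNX24}: $\tcH^\chi_d$ has a PBW basis $\{(b_{i_1}\otimes\cdots\otimes b_{i_d})H_w\}$, and specializing $\{b_i\} = \{1\}$ yields $\{h_w \mid w \in \Sigma_d\}$. As a self-contained alternative one could instead argue directly that the wreath relations \eqref{def:WR} are vacuous (since $E^{\otimes d} = K$) while the quadratic relations \eqref{def:QR} reduce any word in the $h_i$ to a $K$-combination of $h_w$'s, giving spanning; linear independence then follows from the standard $d!$-dimensional representation of the type-$A_{d-1}$ Iwahori--Hecke algebra with parameters $\chi(S), \chi(R)$ (the deformation of the regular representation of $K\Sigma_d$).
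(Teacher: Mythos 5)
Your proof is correct and takes essentially the same approach as the paper: both reduce the claim to verifying the PBW conditions \eqref{def:P1}--\eqref{def:P9} from \cite{LNX24}, and both observe that under \eqref{asm:twistchi} (so $E=K$, $\sigma=\id$, $\rho=0$) every condition degenerates to a tautology or to the automatic identifications $S_i=S_j$, $R_i=R_j$ in $E^{\otimes d}=K$. The only cosmetic difference is that you spell out the conditions one by one, whereas the paper collapses them into the two displayed lines; your closing remark about the $d!$-dimensional regular-representation argument is a genuinely distinct alternative, but you present it only as an aside.
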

\proof
Since $E=K$, it suffice to show that $\tcH^\chi_d$ has PBW bases. 
It then follows from \cite{LNX24} that one needs to  verify \eqref{def:P1}--\eqref{def:P9}, which reduce to the following:
\eq
\begin{split}
&S^M \circ f = f \circ S^M,
\quad
R^M \circ f = f\circ R^M
\quad
\textup{for all }
f\in E\otimes E,
\\
&
S^M_i = S^M_j,
\quad
R^M_i = R^M_j
\quad
\textup{for all }
1\leq i, j < d.
\end{split}
\endeq
The first line follows from the fact that $f$ is $K$-linear; while the second line follows since $E^{\otimes d}$ is a tensor product over $K$.
\endproof

We make several observations. First, one view $\chi$ as a central character $\chi: Z(B\otimes B) \to K$, but only the values at $S$ and $R$ matter.

Second, it turns out that with the assumptions \eqref{asm:twistchi}, the twisted Hecke algebra is nothing but the generic Hecke algebra of type A.
In this paper, it happens to be the case that $E=K$ in all the examples we consider,
so we do not pursue a more general theory without assuming $E=K$ for now.
For future work which deals with more complicated quantum wreath products, we expect $E \neq K$ in the corresponding representation theory.
\endrmk
\subsection{Trivial and Sign Modules}
In the classic theory of representation theory of Weyl groups and Hecke algebras,
there are two important one-dimensional modules called the trivial and the sign modules.

We remind readers that such a notion of trivial and sign modules over the twisted Hecke algebra $\tcH^\chi_d$ exist if and only if \eqref{def:QRchi} splits into linear factors, i.e., 
\eq\label{def:QRchi2}
h_i^2 - \chi(S) h_i -\chi(R) = 0 = (h_i - r_1)(h_i - r_2 )
\quad
\textup{for some}
\quad
r_1, r_2 \in E^{\otimes d} \equiv K.
\endeq
In such a case, there are one-dimensional $\tcH_d^\chi$-modules, called $K_{r_1}$ and $K_{r_2}$, on which each $h_i$ acts by multiplications by $r_1$ and $r_2$, respectively. 

\begin{Def}
Let $r_1\in K^\times$ be a root of $x^2 - \chi(S) x -\chi(R) \in K[x]$.
Let $r_2$ be the other root.
Call $K_{r_1}$ the {\em trivial module}, and $K_{r_2}$  the {\em sign module}. 
\end{Def} 
\noindent Note the choice of $r_1$ is not canonical in general.    

\exa
If the quadratic relation is $0 = (h_i {-} q)(h_i {+} 1)$ for some $q$ in $K$, e.g., $\tcH^\chi_d$ is the Hecke algebra or group algebra for $\Sigma_d$. 
Then, the conventional choice is to pick $r_1 = q$ and $r_2 = -1$.
 
Additionally, these one-dimensional modules $K_1$ and $K_{-q}$ are referred to as the Specht module corresponding to the partitions $(d)$ and $(1^d)$ of $d$, respectively.
\endexa

\subsection{Specht Modules}\label{sec:tHASpecht}
Suppose that \eqref{def:QRchi2} holds. Equivalently, for all $i$:
\eq\label{eq:eigen}
(h_i-r_1)h_i = r_2 (h_i-r_1),
\quad
(h_i-r_2)h_i = r_1 (h_i-r_2).
\endeq
For any partition $\mu \vdash m$, the {\em Specht module} $S^\mu_\chi$ over $\tcH^\chi_d$ is given by
\eq
S^\mu_\chi := x^\chi_\mu h_{w_\mu}  y^\chi_{\mu^t} \tcH^\chi_d,
\quad
\textup{where}
\quad
x^\chi_\mu := \sum_{w\in\Sigma_\mu} (-r_2)^{\ell(w)} h_w,
\quad
y^\chi_\mu := \sum_{w\in\Sigma_\mu} (-r_1)^{\ell(w)} h_w,
\endeq
and $w_\mu \in \Sigma_m$ is the shortest element that takes the standard tableau $\ft^\mu$ of shape $\mu$ to the transpose of $\ft^{\mu^t}$. The definition of 
$h_{w}$ is given in Section~\ref{SS:twisted}. 

For example,
\eq
\mu = (3,1),
\quad
\ft^\mu =\iyoung{123,4}, 
\quad
(\ft^{\mu^t})^t =\Big(\iyoung{12,3,4} \Big)^t = \iyoung{134,2},
\quad
\textup{and}
\quad
w_\mu = (2 3 4) = s_3 s_2.
\endeq
In particular, $S^{(d)}_\chi$ is the one-dimensional module spanned by $x^\chi_{(d)}$ on which $h_i$'s all act by multiplication by $r_1$, thanks to \eqref{eq:eigen},
and hence $S^{(d)}_\chi \cong K_{r_1}$ is the trivial module.
Similarly, $S^{(1^d)}_\chi \cong K_{r_2}$  is the sign module.

\exa\label{ex:HuClifford}
For the Hu algebra $A_q(m) = B \wr \cH(2)$ with $B = \cH_q(\Sigma_m)$.
Consider the Specht module $M = S^\lambda_m$.
We set $E = \End_B(M) \cong K$.
Let $\chi_\lambda = (S^M, R^M)$ where $S^M$ is the zero map so $\chi(S) = 0$, and $R^M$ is the multiplication by $\chi(R) = f_{\lambda,\lambda} \in K$.
The twisted Hecke algebra for this $\chi$ is 
\eq
\tcH^\chi_2 = E \wr \cH(2) = \langle h_1\rangle/( h_1^2 - f_{\lambda,\lambda}).
\endeq
Since there is an element $\sqrt{f_{\lambda}} \in K$ which squares into $f_{\lambda,\lambda}$, the relation \eqref{def:QRchi2} holds.
We may choose $r_1 = \sqrt{f_{\lambda}}$ and $r_2 = -\sqrt{f_{\lambda}}$ with corresponding one-dimensional modules
\eq
K_{\sqrt{f_{\lambda}}} \equiv S^{(2)}_{\chi_\lambda},
\quad
\textup{and}
\quad
K_{{-\sqrt{f_{\lambda}}}} \equiv S^{(1,1)}_{\chi_\lambda}.
\endeq
A direct calculation (see Section \ref{sec:gHu}) leads to the following identification:
\eq\label{eq:HutauM}
S(\lambda, \lambda)_{+} = S^\lambda_m \otimes S^\lambda_m \otimes S^{(2)}_{\chi_\lambda},
\quad
S(\lambda, \lambda)_{-} = S^\lambda_m \otimes S^\lambda_m \otimes S^{(1,1)}_{\chi_\lambda},
\endeq
where the  $B\wr \cH(2)$-action is given by 
\eq\label{eq:HuAction}
(b_1\otimes b_2) \cdot (m_1 \otimes m_2 \otimes y) = (b_1 \cdot m_1) \otimes (b_2\cdot m_2) \otimes y,
\quad
H_1 \cdot (m_1 \otimes m_2 \otimes y) = m_2 \otimes m_1 \otimes (h_1 \cdot y),
\endeq
for all  $b_i \in B$, $m_i \in M$, $y \in S_\chi^\nu$, and $\nu \vdash 2$.

\subsection{The KMS Action}\label{sec:endo}
From the special case \eqref{eq:HuAction} with the Hu algebras,
it is tempting to guess that the following gives a $B\wr \cH(d)$-action on $M \wr N$:
\eq\label{def:naiveaction}
(bH_w) \cdot (m \otimes y) = \sigma_w(b \cdot m)\otimes (h_w \cdot y)
\quad
\textup{for}
\quad
b\in B^{\otimes d}, \ w \in \Sigma_d, \ m \in M^{\otimes d} , \ y \in N.
\endeq
Here, $\sigma_w := \sigma_{i_1} \dots \sigma_{i_N}$ is $w = s_{i_1} \dots s_{i_N}$ is a reduced expression.
It turns out that \eqref{def:naiveaction} does not comply with the following  motivating example of the wreath module that arises from the tensor space representation $T_n^{\otimes d}$ over the affine Hecke algebra  due to Kashiwara, Miwa and Stern \cite{KMS95}.
\exa\label{ex:AHAClifford}
Let $\widehat{\cH}_q(d) $ be the (extended) affine Hecke algebra for $GL_d$.
It is known that $\widehat{\cH}_q(d) \equiv B \wr \cH(d) $ is a quantum wreath product with the following choices, for $a,b\in B := K[x^{\pm 1}]$:
\eq
\begin{split}
&S = (q-1)(1\otimes 1),
\quad
R = q(1\otimes 1),
\\
&\sigma: a\otimes b \mapsto b\otimes a,
\quad
\rho(a\otimes b) = -(q-1)\frac{a\otimes b - b\otimes a}{x\otimes 1 - 1\otimes x} (1\otimes x).
\end{split}
\endeq
In \cite{KMS95}, they defined a tensor space representation 
\eq
T_{n}^{\otimes d},
\quad
T_n := \bigoplus\nolimits_{i \in \ZZ} K v_i,
\quad
  x^{\pm 1}\cdot v_i:= v_{i\pm n},
\endeq
with explicit $H_i$-actions.
Consider the $K[x^{\pm 1}]$-submodule $V_j := \bigoplus\nolimits_{i \in \ZZ} K v_{j+ni} \subseteq T_n$. 
Any $m\in V_j^{\otimes d}$ can be written as $P \cdot v_j^{\otimes d}$ for some polynomial $P \in K[x^{\pm1}]^{\otimes d}$.
The tensor product $V_j^{\otimes d}$ becomes (see \cite{LM25}) an $\widehat{\cH}_q(d)$-subdmodule of $T_{n}^{\otimes d}$ with
\eq\label{eq:KMS1}
\begin{split}
b \cdot (P\cdot v_j^{\otimes d}) 
	&:= (bP)\cdot v_j^{\otimes d},
\\
H_i \cdot (P\cdot v_j^{\otimes d}) 
	&:= q \sigma_i(P) \cdot v_j^{\otimes d}
	+ \rho_i(P)  \cdot v_j^{\otimes d}.
\end{split}
\endeq
It turns out that $V_j^{\otimes d}$ can be regarded as  a wreath module 
$V_j^{\otimes d} \equiv V_j \wr K_q$, and hence
the Kashiwara-Miwa-Stern action \eqref{eq:KMS1} can be translated  into the following,
for all $P\in K[x^{\pm1}]^{\otimes d}$ and  $y \in N$:
\eq\label{eq:KMS2}
H_i \cdot ((P \cdot v_j^{\otimes d})\otimes y) 
= (\sigma_i(P) \cdot v_j^{\otimes d}) \otimes (h_i \cdot y) 
+ (\rho_i(P)  \cdot v_j^{\otimes d}) \otimes y.
\endeq
\endexa
We immediately see from  \eqref{eq:KMS1} that one cannot expect \eqref{def:naiveaction} to hold as in the case \eqref{eq:HuAction}  for the Hu algebras. 
%
\section{The \Based Modules}
In this section the focus is on the conditions on the $B$-module $M$ such that the $K$-module
$
M\wr N := M^{\otimes d} \otimes N
$
affords a $B \wr \cH(d)$-module structure.
Such $B$-modules are called the {\em \based modules}.
\subsection{Conventions}
When it is unambiguous, the following abbreviations will be employed:
\eq
\cM := M^{\otimes d}, 
\quad
W := K\Sigma_d,
\quad
\cB := B^{\otimes d},
\quad
\tcH := \tcH^\chi_d,
\endeq
where $\tcH^\chi_d \cong E \wr \cH(d)$ is the twisted Hecke algebra with respect to a fixed twist $\chi = (S^M, R^M)$, under the assumption \eqref{asm:twistchi}.

Since we assumed that the algebra $B \wr \cH(d)$ has PBW bases, one can define an  invertible map 
\eq
\tau^B:  W \otimes \cB \to  \cB \otimes W,
\quad
w \otimes b \mapsto \sum\nolimits_g b_g \otimes g, 
\endeq
where the PBW monimial $H_w b$ is expressed as $\sum_g  b_g H_g$ via the other PBW basis.
Next, the multiplication of $B \wr \cH(d)$ leads to a map
\eq
m : \cB \otimes W \otimes \cB \otimes W \to \cB \otimes W,
\quad
b_1 \otimes w_1 \otimes b_2 \otimes w_2  \mapsto \sum\nolimits_g b_g \otimes g
\endeq
where $b_1 H_{w_1} b_2 H_{w_2} = \sum_g b_g H_g \in B \wr \cH(d)$.
One defines the following multiplication maps in a similar fashion:
\eq
m_W: W\otimes W \to \cB \otimes W, 
\quad
m_B:\cB\otimes \cB \to \cB,
\quad
\textup{ and }
\quad
m_\cH: \tcH \otimes \tcH \to \tcH.
\endeq 
Let  $\alpha_M: \cB \otimes \cM \to \cM$ (resp. $\alpha_N: \tcH \otimes N \to N$) be the structure map of $M$ as a $\cB$-module (resp. of $N$ as an $\tcH$-module).

If $M$ is equipped with a {\em structure map} $\tau^M : W\otimes \cM \to \cM \otimes \tcH$ (to be defined shortly), then
one can show that \eqref{def:QWPAction} gives a $B\wr \cH(d)$-action on $M\wr N$.

\subsection{Construction of the Structure Map $\tau^M$}
Similar to the approach in \cite[\S5]{LNX24}, we need to first fix an expression $r(w)$ for all $w\in \Sigma_d$ that is compatible with others, i.e., $r(1) = 1$ and for every non-identity element $w \in \Sigma_d$, there exists a unique $x\in \Sigma_d$ such that 
\eq\label{def:choicemap}
w = s_i x > x,
\quad
r(w) = s_i r(x)
\quad
\textup{for some }
i.
\endeq

\begin{Def}
Suppose that $\sigma^M$ and $\rho^M$ are two given maps in $\End_K(M \otimes M)$.
The structure map $\tau^M = \tau^M(r)$, which also depends on $(\sigma^M, \rho^M)$, is defined recursively by 
\eq\label{def:tauaction}
\tau^M(w\otimes m)
	:= \begin{cases}
	m \otimes 1, & \tif w = 1;
        \\
        \sigma^M_i(m)\otimes h_i + \rho_i^M(m)\otimes 1, &\tif w = s_i;
        \\
        (\id\otimes\mu)\circ(\tau^M\otimes\id)(s_i\otimes \tau^M(s_iw \otimes m)),
        & \tif  r(w) = s_ir(x). \\
	\end{cases}
\endeq
\end{Def}
Equivalently, $\tau^M(r)$ is defined inductively with respect to the filtration $(W^{(\ell)}\otimes \cM)_{\ell \geq 0}$ on $W\otimes \cM$, where $W^{(\ell)} := \bigoplus_{\ell(w)=\ell} K w$.
To be precise, 
define $\iota_r : W^{(\ell+1)}\to W^{(1)} \otimes W^{(\ell)}$ by $\iota_r(w ) = s_i \otimes x$ for the unique $s_i$ satisfying $r(w) = s_i r(x)$.
We begin with defining  $\tau^M_{\ell}$ on $W^{(\ell)}\otimes \cM$ via \eqref{def:tauaction} if $\ell  \leq 1$.
Then, the third line of \eqref{def:tauaction} translates into that $\tau^M_{\ell+1} $ is the following composition:
\eq\label{def:tauMdiag}
\xymatrix{
W^{(\ell+1)}\otimes \cM \ar[d]^{\iota_r \otimes \id}
&&
&&
\cM\otimes \tcH
\\
W^{(1)}\otimes W^{(\ell)}\otimes \cM \ar[rr]^{\id \otimes \tau^M_{\ell}}
&&
W^{(1)}\otimes \cM \otimes \tcH  \ar[rr]^{\tau^M_{1} \otimes \id}
&&
\cM \otimes \tcH \otimes \tcH  \ar[u]^{\id\otimes m_\cH}
}
\endeq
Recall that  when $d = 2$, $r$ is unique and hence  $\tau^M$  does not depend on a choice of $r$. 
\exa
Consider the longest element $w_\circ = s_1s_2s_1 = s_2 s_1 s_2 \in \Sigma_3$.
If we choose $r(w_\circ) = s_2 s_1 s_2$, then $\tau^M(w_\circ\otimes m)$ has to be defined based on the value of $\tau^M(s_1s_2\otimes m) = \sum_w m_w \otimes h_w$, i.e.,
\eq
\tau^M(r)(w_\circ) := \sum\nolimits_w (1\otimes \mu)(\tau^M(s_2 \otimes m_w) \otimes h_w).
\endeq
However, for a different choice $r'$ with $r'(w_\circ) = s_1 s_2 s_1$, one gets
\eq
\tau^M(r')(w_\circ) := \sum\nolimits_w (1\otimes \mu)(\tau^M(s_1 \otimes m'_w) \otimes h_w),
\quad\textup{where}
\quad
\tau^M(s_2s_1\otimes m) = \sum\nolimits_w m'_w \otimes h_w.
\endeq
Thus, $\tau^M(r)$ may not be independent of choice of $r$ in general. 
\endexa
\subsection{\Based Modules over the Base Algebra $B$}
Recall that when we refer a quantum wreath product $B \wr \cH(d)$,
we actually mean a tuple $({B}, \sigma, \rho, S, R)$ and an integer $d\geq 1$.
Now, we introduce an analogous notion for the modules over $B \wr \cH(d)$.
Consider the tuple $(M, \sigma^M, \rho^M, S^M, R^M)$ consisting of
\begin{itemize}
\item A left $B$-module $M$,
\item Two maps $\sigma^M, \rho^M \in \End_K(M \otimes M)$ that are $K$-linear, and 
\item Two maps $S^M, R^M \in E^{\otimes 2} \subseteq \End_{B\otimes B}(M \otimes M)$ that are $B\otimes B$-linear.
\end{itemize}
This tuple is equivalent to the triple $(M, \tau^M, \tcH^\chi_d)$, where
\begin{itemize}
\item $\tcH^\chi_d = E \wr \cH(d)$ is the twisted Hecke algebra satisfying \eqref{asm:twistchi};
\item $\tau^M: W\otimes \cM \to \cM\otimes \tcH^\chi_d$ is the structure map \eqref{def:tauaction} with respect to a choice $r$.
\end{itemize}
Recall the following maps defined earlier: $\tau^B: W\otimes \cB\to \cB \otimes W$, 
$m_W: W\otimes W \to \cB \otimes W$, 
and $\alpha_M : \cB\otimes \cM \to \cM$.
Consider the following diagrams that depend on the structure map 
$\tau^M$:
\eq\label{fig:basedmod}
\begin{array}{cc}
\xymatrix{
W \otimes \cB\otimes \cM \ar[rr]^{\tau^B\otimes \id} \ar[d]^{\id\otimes \alpha_M} 
&&\cB\otimes W\otimes \cM 
\ar[d]^{\id\otimes {\tau}^M}
\\
W \otimes \cM \ar[d]^{\tau^M}
&& \cB\otimes \cM\otimes\tcH \ar[lld]^{\alpha_M \otimes \id}
\\
\cM\otimes\tcH & 
}
&
\xymatrix{
W\otimes W\otimes \cM \ar[d]^{\id\otimes\tau^M} \ar[rr]^{m_W\otimes \id} 
&&\cB \otimes W\otimes \cM \ar[d]^{\id \otimes \tau^M}
\\
W\otimes \cM\otimes\tcH \ar[d]^{\tau^M\otimes \id}
&& \cB \otimes \cM\otimes \tcH \ar[d]^{\alpha_M \otimes \id}
\\
\cM\otimes \tcH\otimes\tcH \ar[rr]^{1\otimes m_\cH} 
 &&
\cM\otimes\tcH 
}
\\
\normallabel{WB}  \mbox{(WB)}
	&\normallabel{WW}  \mbox{(WW)}
\end{array}
\endeq
We will refer to the diagram to the left (and right, resp.) in \eqref{fig:basedmod} as \hyperref[WB]{(WB)} (and \hyperref[WW]{(WW)}, resp.).

\Def\label{def:basedmod}
A left $B$-module $M$ (or, a tuple $(M, \sigma^M, \rho^M, S^M, R^M)$) is called a {\em \based module} if 
\enu[(i)]
\item
The map $\tau^M(r)$ is independent of the choice of $r$;
\item
Both diagrams in \eqref{fig:basedmod} commute.
\endenu


In order to prove a criteria for $M$ to be a \based module, let us begin with certain conditions regarding the maps $\sigma^M$ and $\rho^M$ which are counterparts of the conditions \eqref{def:P1}--\eqref{def:P9} for the PBW basis theorem.
Let $l^M_S, l^M_R \in \End_K(M \otimes M)$ be the left action by $S, R \in B$ on $M \otimes M$.
Consider the equations, for
$1\leq i \leq d-1$,
\  $m\in M^{\otimes d}, 
\ b\in B^{\otimes d}$,
\ $|i-j| =1$, 
and $|k - l|>1$:
\begin{align}
\tag{M2}&\label{def:M2} \sigma^M_i(b\cdot m) = \sigma_i(b)\cdot \sigma^M_i(m),
\quad
\rho_i^M(b\cdot m) =  \sigma_i(b) \cdot \rho^M_i(m) + \rho_i(b) \cdot m,
\\
\tag{M4}&\label{def:M4}
\SK (\sigma^M_i)^2+\rho^M_i\sigma^M_i+\sigma^M_i\rho^M_i = l^M_{S_i}\sigma^M_i,
\quad
\RK (\sigma^M_i)^2+(\rho^M_i)^2 = l^M_{S_i} \rho^M_i + l^M_{R_i}.
\end{align}
The next three conditions will be considered if $d \geq 3$:
\begin{align}
\tag{M5}&\label{def:taubr1}\sigma^M_i\sigma^M_j\sigma^M_i = \sigma^M_j \sigma^M_i \sigma^M_j,
\quad \rho^M_i\sigma^M_j\sigma^M_i = \sigma^M_j \sigma^M_i \rho^M_j,
\\
\tag{M6}&\label{def:taubr2}
\rho^M_i\sigma^M_j\rho^M_i =\SK \sigma^M_j \rho^M_i \sigma_j^M + \rho_j^M\rho_i^M\sigma_j^M + \sigma_j^M \rho_i^M \rho_j^M,
\\
\tag{M7}&\label{def:taubr3}\rho_i^M\rho_j^M\rho_i^M + \RK \sigma_i^M \rho_j^M \sigma_i^M
= \rho_j^M\rho_i^M\rho_j^M + \RK \sigma_j^M \rho_i^M \sigma_j^M.
\end{align}

\begin{thm}\label{thm:basedmod}
$(M, \sigma^M, \rho^M, r^M_S, r^M_R)$ is a \based $B$-module if and only if the following are true:
\begin{enumerate}[\textup{(}i\textup{)}]
\item \eqref{def:M2} and \eqref{def:M4} hold;
\item \eqref{def:taubr1}--\eqref{def:taubr3} hold additionally, if $d\geq 3$.
\end{enumerate}
\end{thm}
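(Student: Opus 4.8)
The plan is to prove both implications by unwinding the recursive definition of $\tau^M$ and matching the commutativity of the diagrams \hyperref[WB]{(WB)} and \hyperref[WW]{(WW)} (together with the independence of $\tau^M(r)$ on $r$) against the algebraic relations \eqref{def:M2}, \eqref{def:M4}, \eqref{def:taubr1}--\eqref{def:taubr3}. The strategy is a generalization of the grand-loop argument of \cite{LNX24} that established the PBW basis theorem, now carried out one level up at the level of modules rather than algebras, using the twisted Hecke algebra $\tcH^\chi_d$ as the receptacle for the Hecke-degree bookkeeping.

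\medskip

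\textbf{Easy direction: \based module $\Rightarrow$ relations.} Assume $M$ is a \based module. The quadratic-type relations \eqref{def:M4} are extracted by evaluating \hyperref[WW]{(WW)} on $s_i \otimes s_i \otimes m$: the left-hand route computes $\tau^M(s_i \otimes -)$ applied to $\tau^M(s_i\otimes m)$ and then multiplies the two $\tcH$-factors using $h_i^2 = \chi(S)h_i + \chi(R)$, while the right-hand route uses $m_W(s_i\otimes s_i) = S_i\otimes s_i + R_i\otimes 1$ in $B\wr\cH(d)$; comparing the coefficients of $h_i$ and of $1$ in $\cM\otimes\tcH$ yields exactly the two identities of \eqref{def:M4}. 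Relation \eqref{def:M2} comes from \hyperref[WB]{(WB)} evaluated at $s_i \otimes b \otimes m$, using that $\tau^B(s_i \otimes b) = \sigma_i(b)\otimes s_i + \rho_i(b)\otimes 1$ (which is precisely the wreath relation \eqref{def:WR} read in the PBW basis); again compare the $h_i$- and $1$-components. For $d\geq 3$, relations \eqref{def:taubr1}--\eqref{def:taubr3} are obtained from the independence of $\tau^M(r)$ on the choice $r$ applied to the longest element $w_\circ \in \Sigma_3$ (the two reduced words $s_is_js_i$ and $s_js_is_j$), combined with \hyperref[WW]{(WW)} evaluated at the relevant triples to reduce the resulting expression into normal form; matching coefficients of $h_w$ for $w$ ranging over $\Sigma_3$ separates out \eqref{def:taubr1} (coefficient of $h_{s_is_j}$, resp.\ $h_{s_js_is_j}$), \eqref{def:taubr2}, and \eqref{def:taubr3} (coefficient of $1$). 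This direction is essentially a finite, if tedious, computation.

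\medskip

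\textbf{Hard direction: relations $\Rightarrow$ \based module.} Conversely, assume \eqref{def:M2}, \eqref{def:M4}, and (if $d\geq 3$) \eqref{def:taubr1}--\eqref{def:taubr3}. I must show (i) $\tau^M(r)$ is independent of $r$, and (ii) \hyperref[WB]{(WB)} and \hyperref[WW]{(WW)} commute. The argument proceeds by induction on the Hecke-degree filtration $(W^{(\ell)}\otimes\cM)_{\ell\geq 0}$, exactly paralleling \cite{LNX24,E22}: one sets up a single grand loop in which the statements ``$\tau^M_\ell$ is well-defined/independent of $r$ up to length $\ell$'', ``\hyperref[WB]{(WB)} commutes on $W^{(\ell)}\otimes\cB\otimes\cM$'', and ``\hyperref[WW]{(WW)} commutes on $W^{(a)}\otimes W^{(b)}\otimes\cM$ with $a+b\leq\ell$'' feed into each other, each implication being verified by a diagram chase that invokes the PBW relations \eqref{def:P1}--\eqref{def:P9} for $B\wr\cH(d)$ on the algebra side and one of \eqref{def:M2}--\eqref{def:taubr3} on the module side. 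The base cases $\ell\leq 1$ (and the length-$2$, length-$3$ checks that absorb \eqref{def:M4} and \eqref{def:taubr1}--\eqref{def:taubr3}) are handled by direct computation. Independence of $r$ reduces, via Matsumoto/Tits, to invariance under a single braid move on a pair of adjacent generators, which is where \eqref{def:taubr1}--\eqref{def:taubr3} enter; once independence is known, the associativity-type coherence built into \eqref{def:tauMdiag} lets one reduce the commutativity of \hyperref[WB]{(WB)} and \hyperref[WW]{(WW)} on arbitrary length to the generating cases $w = s_i$, which are \eqref{def:M2} and \eqref{def:M4} respectively.

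\medskip

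\textbf{Main obstacle.} The genuinely hard part is organizing the grand loop so that every implication between the three families of statements is available when needed without circularity, and in particular controlling the choice map $r$ throughout: the recursion \eqref{def:tauMdiag} for $\tau^M_{\ell+1}$ only strips off one generator on the left, so relating $\tau^M$ evaluated along two different reduced words requires simultaneously propagating \hyperref[WW]{(WW)}-type coherence down the filtration, and the bookkeeping of which coloring/which pair $\{i,j\}$ is involved at each step is delicate. As the remark following Theorem \ref{thmA} indicates, this is precisely the point where the argument becomes ``highly non-trivial'', and the detailed verification of commutativity of all the intermediate diagrams is deferred to Appendix \ref{sec:app}.
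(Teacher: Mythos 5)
Your proposal is correct and essentially follows the paper's route: the necessary direction is obtained by unwinding $\tau^M$ on words of length $\leq 3$ and comparing coefficients of the $h_w$'s, and the sufficient direction proceeds by induction on the Hecke-degree filtration via the diagrams (WB$^\ell$) and (WW$_k^\ell$). The one nuance worth flagging is that the paper's hard direction is more linear than a single interleaved ``grand loop'': independence of $\tau^M(r)$ is established completely in Step 1 from \eqref{def:taubr1}--\eqref{def:taubr3} alone (via Matsumoto's theorem and an internal induction on $\ell(w)$, without yet invoking \hyperref[WB]{(WB)} or \hyperref[WW]{(WW)}), then \hyperref[WBl]{(WB$^\ell$)} is proved by a single induction with base case \eqref{def:M2}, and only then is \hyperref[WWkl]{(WW$_k^\ell$)} proved by a double induction on $(k,\ell)$ using the already-established \hyperref[WBl]{(WB$^k$)} and base case \eqref{def:M4} — this ordering is precisely what removes the circularity that your description of mutually feeding statements would otherwise have to justify.
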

\proof
For the necessary part, we assume that $B$ is a \based module.
Then, both (i) and (ii) follow from a direct computation (see Section \ref{app:neci} for (i) and Section \ref{app:necii} for (ii)).

For the sufficient part:
\begin{itemize}
\item Step 1: We show that $\tau^M(r)$ is independent of choice of $r$ if  \eqref{def:taubr1}--\eqref{def:taubr3} hold. 
This claim follows from another direct computation which can be found in Section \ref{app:indep}.
\end{itemize}
For the next steps, consider the following diagrams for $k, \ell \geq 1$:
\eq\label{fig:basedmodl}
\xymatrixrowsep{1.5pc}
\xymatrixcolsep{1pc}
\begin{array}{cc}
\xymatrix{
W^{(\ell)} \otimes  \cB \otimes \cM \ar@{=}[r] \ar[d]_{\id \otimes \alpha_M}
	&W^{(\ell)} \otimes  \cB \otimes \cM \ar[d]^{\tau^B\otimes \id}
\\
W^{(\ell)} \otimes  \cM \ar[d]_{\tau^M}
	&\cB \otimes W^{(\ell)} \otimes \cM  \ar[d]^{\id\otimes \tau^M} 
\\
\cM \otimes\tcH \ar@{=}[rd]& \cB \otimes \cM \otimes \tcH \ar[d]^{\alpha_M\otimes \id}
\\
	&\cM\otimes \tcH
}
&
\xymatrix{
W^{(k)} \otimes W^{(\ell)} \otimes \cM \ar[d]_{\id\otimes \tau^M} \ar@{=}[r]
	&W^{(k)} \otimes W^{(\ell)}\otimes  \cM \ar[d]^{m_W\otimes \id}
\\
W^{(k)} \otimes \cM \otimes \tcH \ar[d]_{\tau^M\otimes \id}
	&\cB \otimes  W^{(k)} \otimes \cM \ar[d]^{\id\otimes\tau^M}
\\
\cM \otimes \tcH \otimes \tcH \ar[dr]_{\id\otimes m_\cH}
	& \cB \otimes  \cM \otimes  \tcH \ar[d]^{\alpha_M \otimes \id}
\\
	&\cM \otimes \tcH 
}
\\
\normallabel{WBl}  \mbox{(WB$^\ell$)}
& \normallabel{WWkl}  \mbox{(WW$_k^\ell$)}
\end{array}
\endeq
We will refer to the diagram to the left (and right, resp.) in \eqref{fig:basedmodl} as 
\hyperref[WBl]{(WB$^\ell$)}  (and \hyperref[WWkl]{(WW$_k^\ell$)}, resp.).

\begin{itemize}
\item Step 2: We show that all \hyperref[WBl]{(WB$^\ell$)} commute using  an induction on $\ell$. 
It follows once again from the calculation in Section \ref{app:neci} that  \eqref{def:M2} implies the commutativity of  \hyperref[WBl]{(WB$^1$)}.
The induction proceeds since
\[
 \textup{Commutativity of \hyperref[WBl]{(WB$^1$)}, \hyperref[WBl]{(WB$^\ell$)} }
\Rightarrow
\textup{commutativity of \hyperref[WBl]{(WB$^{\ell+1}$)}},
\]
whose verification can be found in Section \ref{app:WBcom}.
\item Step 3: To prove that \hyperref[WW]{(WW)} commutes, 
consider first the case when the two composite maps are evaluated at $w\otimes x \otimes m$  for any $m \in \cM$ and $x, w \in \Sigma_d$ such that $\ell(wx) = \ell(w) + \ell(x)$.
That is, the multiplication map $m_W$ takes $w\otimes x$ to $1\otimes (wx) \in \cB \otimes W^{\ell(wx)}$, and hence
\eq
\begin{split}
&(\alpha_M\otimes \id_\cH \circ \id_\cB \otimes \tau^M \circ m_W \otimes \id_\cM) (w\otimes x \otimes m) = \tau^M(wx \otimes m) 
\\
&\qquad= (\id_\cM \otimes m_\cH\circ \tau^M \otimes \id_\cH \circ \id_W \otimes \tau^M)(wx),
\end{split}
\endeq
where the last equality follows from \eqref{def:tauMdiag} together with the independence of choice of $r$ proved in Step 1.
It remains to consider the case where $\ell(wx) < \ell(w) + \ell(x)$, i.e., we may assume for now $m_W(W^{(1)}\otimes W^{(\ell)}) \subseteq \cB \otimes W^{\ell}$.

We use the commutativity of \hyperref[WBl]{(WB$^k$)}  to start a double induction on $(k, \ell)$ to show that all \hyperref[WWkl]{(WW$_k^\ell$)} commute.
For the base cases, \eqref{def:M4} translates into that  \hyperref[WWkl]{(WW$_1^1$)} commutes.
Additionally, it is immediate that \hyperref[WWkl]{(WW$_k^0$)} and \hyperref[WWkl]{(WW$_0^\ell$)}  commute for all $k \geq 0$.
Then, the induction proceeds since
\eq\label{eq:doubleinduction}
\textup{Commutativity of 
\hyperref[WBl]{(WB$^k$)},
\hyperref[WWkl]{(WW$_k^\ell$)},
\hyperref[WWkl]{(WW$_1^\ell$)} }
\Rightarrow 
\textup{
Commutativity of 
\hyperref[WWkl]{(WW$_k^{\ell+1}$),} }
\endeq
whose verification can be found in Section \ref{app:WWcom}.
\end{itemize}
Therefore, both diagrams \eqref{fig:basedmod} commute, and hence $M$ is a \based module.
\endproof

%

\subsection{Examples of \Based Modules}
In this section, we consider several important algebras in the literature that are quantum wreath products.
Using Theorem \ref{thm:basedmod}, we will demonstrate a vast family of modules that are wreath modules constructed from certain \based modules.
\subsubsection{(Generalized) Hu Algebras}\label{sec:gHu}
Let $M = S_m^\lambda$ be the Specht module over $B = \cH_q(\Sigma_m)$ with respect to the partition $\lambda$.
Let $\chi  = \chi_{\lambda}$ from Example \ref{ex:HuClifford}. 
By comparing \eqref{eq:HutauM} with \eqref{def:tauaction}, we see that for the modules $S(\lambda, \lambda)_{\pm} = M \wr K_{\pm \sqrt{f_{\lambda}}}$, the structure map is an isomorphism of $K$-modules given by, for $w\in \Sigma_2, z\in M\otimes M$:
\eq\label{eq:HutauM2}
\tau^M(w \otimes z) = z \otimes h_w,
\quad
(\textup{i.e., }
\sigma^M = \textup{flip},
\ 
\rho^M = 0).
\endeq
Indeed, $S^\lambda_m$ is a \based module with respect to $\tau^M$ given in \eqref{eq:HutauM2}, which follows from 
\[
\sigma^M((T_x\otimes T_y) \cdot (m_1\otimes m_2)) 
=   (T_y \otimes m_2)\otimes (T_x\cdot m_1)
= \sigma(T_x\otimes T_y) \cdot \sigma^M(m_1\otimes m_2),
\] 
for all $x, y \in \Sigma_m$, $m_i \in M$,
and the fact that the rest of equalities in \eqref{def:M2}, \eqref{def:M4} hold on the spot.

Furthermore, if we consider the generalized Hu algebra $B \wr \cH(d)$, $d\geq 2$ with the same choice of parameters as in the case of the Hu algebra.
We see that \eqref{def:taubr1}--\eqref{def:taubr3} all hold immediately, and hence $\tau^M$ for $M =S^\lambda_m$ is independent of the choice of $r$.

\subsubsection{Affine Hecke Algebras}\label{sec:AHA}
Following Example \ref{ex:AHAClifford}, $\tcH^\chi_d$ is the usual Hecke algebra $\cH_q(\Sigma_d)$ with generators $h_1, \dots h_{d-1}$ subject to the Type A braid relations as well as the quadratic relation $h_i^2 - \SK h_i - \RK = 0$, where $
\SK = q-1, \RK = q \in K$.

For the module $V_j^{\otimes d} = M \wr K_q$ with $M = V_j$, the structure map $\tau^M$ in \eqref{eq:KMStauM} translates into that, for all $i$, $P \in K[x^{\pm 1}]^{\otimes d}$:
\eq\label{eq:AHAtauM2}
\sigma_i^M(P\cdot v_j^{\otimes d}) = \sigma_i(P) \cdot v_j^{\otimes d},
\quad 
\rho_i^M(P\cdot v_j^{\otimes d}) = \rho_i(P)\cdot v_j^{\otimes d}.
\endeq
With our setup, one can give an alternative proof that the Kashiwara-Miwa-Stern action is well-defined by checking \eqref{def:taubr1}--\eqref{def:taubr3} and \eqref{def:M2}--\eqref{def:M4}.
Indeed, in this case, 
\eqref{def:taubr1}--\eqref{def:taubr3}, \eqref{def:M2} and \eqref{def:M4} follow from \eqref{def:P5}--\eqref{def:P7}, \eqref{def:P2} and \eqref{def:P4}, respectively.

\subsubsection{Ariki--Koike Algebras}\label{sec:mAK}
let $u_1, \dots, u_m$ be elements in $K$ such that $\Delta:= \prod_{i>j} (u_i - u_j)$ is invertible.
Let $F_1, \dots, F_m$ be the unique polynomials in $K[X]$ of degree $m-1$ determined by $F_i(u_j) = \delta_{i,j} \Delta$ for all $1\leq i, j \leq m$, i.e.,
the coefficient matrix $(f_{i,j})_{1\leq i,j \leq m}$ obtained from $F_i(X) = \sum_{j=1}^m f_{i,j} X^{j-1}$ is obtained from 
\eq
(f_{i,j})_{1\leq i,j \leq m} = \Delta 
\left(\begin{smallmatrix}
1 & 1& \dots & 1 
\\
u_1 & u_2 & \dots & u_m
\\
\vdots & \vdots & \vdots & \vdots
\\
u_1^{m-1} & u_2^{m-1} & \dots & u_m^{m-1}
\end{smallmatrix}\right)^{-1}.
\endeq
Consider the (modified) Ariki--Koike algebra $B\wr\cH(d)$ with $B = \frac{K[t]}{\prod_{i=1}^m(t-u_i)}$, 
$S = v-v\inv$,
$R = 1$,
and $\rho$ determined by
\[
\rho(t\otimes 1) = -\frac{S}{\Delta^2} \sum\nolimits_{i<j} (u_j - u_i) F_i(t\otimes 1) F_j(1\otimes t).
\]
Let $M = K_a$ be the one-dimensional simple module over $B$ on which $t$ acts by the multiplication by $u_a$ for some $1\leq a \leq m$. 
Next, since $S, R \in K(1\otimes 1)$, $\cH_\chi^d$ is again the usual Hecke algebra $\cH_q(\Sigma_d)$ with generators $h_1, \dots h_{d-1}$ subject to the Type A braid relations as well as the quadratic relation $h_i^2 - \SK h_i - \RK = 0$, where $\SK = v-v^{-1}, \RK = 1 \in K$.

{In the following, we determine all structure maps $\tau^M$ that satisfy the assumption} $\sigma^M_i(k) = s k$ and $\rho^M_i(k) = r k$ for all $k \in M^{\otimes d}$, for some $s, r \in K$.
Next, \eqref{def:taubr2} becomes
$0 =sr(s + r) $;
while 
\eqref{def:taubr3} is automatically true.

Applying 
$b = t_1$ to \eqref{def:M2} yields a vacuous statement and that $0 = \rho(t_1 \cdot k)$, or,
\eq
0  = -\frac{S}{\Delta^2} \sum\nolimits_{i < j} F_i(u_a) F_j(u_a),
\endeq
which holds since $ F_i(u_a)$ and $ F_i(u_a)$ cannot be nonzero simultaneously.
Finally, \eqref{def:M4} becomes 
\eq
s( (v-v^{-1})(s-1)+2r) = 0,
\quad
s^2 + r^2 = (v-v^{-1}) r + 1.
\endeq
An elementary calculation shows that $\tau^M$ is a structure map if $s=0$ and $r = \gamma$ with $\gamma \in \{ v, -v^{-1}\}$.

\section{The Wreath Modules}
\subsection{The Construction of the Wreath Modules}\label{sec:WM}
We start with a fundamental result that shows that the wreath module has a well-defined action. The main idea is to check the commutativity of the diagrams to 
prove that the action is associative. 

\thm\label{thm:MwrN}
Suppose that  $M$ is a \based left $B$-module, and $N$ is a left module over $\tcH = \tcH^{\chi}_d$.
Then, the map $\alpha_{MN}$ in \eqref{def:QWPAction} gives a $B\wr \cH(d)$-action on $M\wr N$.


\proof

We will use commutative diagrams in \eqref{fig:basedmod} to show that the following diagram  commutes:
\eq\label{fig:comm0}
\xymatrix{
\cB  W\cB  W \cM N \ar[rr]^{\id \id \alpha_{MN}} \ar[d]^{m\id\id}
&&\cB  W \cM N  \ar[d]^{\alpha_{MN}}
\\
\cB  W \cM N  \ar[rr]^{\alpha_{MN}} 
&&
\cM N
}
\endeq
Consider the following diagram:
\eq\label{fig:bigcomm}
\xymatrixrowsep{1pc}
\xymatrixcolsep{1pc}
\xymatrix{
\cB  W\ud{\alpha_{MN}}{\cB  W \cM N} \ar[dd] \ar@{=}[r]
	&\cB  W\cB  \ud{\tau^M}{W \cM} N  \ar[d] \ar@{=}[r]
		&\cB  \ud{\tau^B}{W\cB}  \ud{\tau^M}{W \cM}N \ar[d] \ar@{=}[r]
			&\ud{m_B\circ \id\tau^B}{\cB  W\cB}  W \cM N \ar[d]\ar@{=}[r]
				&\ud{m_B m_W \circ \id \tau^B \id}{\cB W \cB W} \cM N \ar[d]
\\
	&\cB  \ud{\substack{\tau^M\circ \id\alpha_M\\ = \alpha_M\id\circ\id\tau^M\circ\tau^B\id}}{W \cB  \cM} \tcH N  \ar[d]
		&\cB \cB \ud{\tau^M}{W \cM} \tcH N \ar[d]
			&\cB \ud{\substack{\id m_\cH \circ \tau^M \id\circ \id \tau^M\\ =\alpha_M\id \circ \id\tau^M \circ m_W\id}}{W W M} N \ar[dd]
				&\cB \cB \ud{\tau^M}{W \cM} N \ar[d]
\\
\ud{\alpha_{MN}}{\cB  W \cM N} \ar@/_2.5pc/[ddrr]
	& \cB \cM \cH \ud{\alpha_N}{\tcH N}\ar[d]
		&\ud{\substack{\alpha_M\circ\id\alpha_M \\ =\alpha_M \circ m_B\id}}{\cB \cB\cM}  \udr{\substack{\alpha_N\circ\id\alpha_N \\ =\alpha_N \circ m_\cH\id}}{\tcH \tcH N} \ar[dd]
			&
				& \ud{\substack{\alpha_M\circ\id\alpha_M \\ =\alpha_M \circ m_B\id}}{\cB \cB\cM}  \ud{\alpha_N}{\tcH N} \ar@/^2.5pc/[ddll]
\\
	&\ud{\alpha_{M}}{\cB  \cM}\ud{\alpha_N}{ \tcH N } \ar[dr]
		&
			&\ud{\alpha_{M}}{\cB  \cM}\ud{\alpha_N}{ \tcH N }\ar[dl]
				&
\\
&&\cM N&&
}
\endeq
Thanks to the commutativity of  \eqref{fig:basedmod}, one has the identifications of maps
$\tau^M\circ \id\alpha_M = \alpha_M\id \circ \id\tau^M\circ \tau^B\id : W\cB \cM \to \cM \tcH$ 
and 
$\id m_\cH \circ \tau^M \id\circ \id \tau^M = \alpha_M\id \circ \id\tau^M \circ m_W\id : WW \cM \to \cM \tcH$.
Also, since $M$ is a $B$-module, $\alpha_M \circ \id \alpha_M = \alpha_M\circ m_B\id : \cB \cB \cM \to \cM$.
Similarly, $\alpha_N \circ \id \alpha_N = \alpha_N\circ m_\cH\id : \tcH \tcH N \to N$ since $N$ is an $\tcH$-module.
That is, the diagram is well-defined.

Note that there are five paths from $\cB W \cB W \cM N$ to $\cM N$.
To show the the diagram \eqref{fig:bigcomm} commutes, we need to check that the composite map produced from any such path coincides with the composite map produced from the path to its right. 
Here, we omit the verification (see \ref{app:bigcomm} for details).

Finally, note that $m = m_B\id \circ m_B m_W\circ \id \tau^M \id: \cB W \cB W \to \cB W$,  the map produced from the rightmost path in \eqref{fig:bigcomm} is exactly the map $\alpha_{MN}\circ m\id\id $,
and thus we conclude that \eqref{fig:comm0} commutes.

\endproof

\subsection{Induced Modules as Wreath Modules}
Recall \eqref{eq:ind}.  
Consider the induced module 
\eq
\ind_{1^d}^d(M^{\otimes d}) =  (B\wr \cH(d))  \otimes_{B^{\otimes d}} M^{\otimes d},
\endeq
on which the $B \wr \cH(d)$-action is given by, for $b\in \cB, \ w,x\in \Sigma_d, \ m\in \cM$:
\eq
(bH_w) \cdot (H_x \otimes m) 
:=
\sum\nolimits_z H_z \otimes (b_z  \cdot m),
\quad
\textup{if}
\quad
bH_w H_x = \sum\nolimits_z H_z b_z.
\endeq
In other words, the structure map $\cB \otimes W \otimes W \otimes \cM \to W \otimes \cM$ of $\ind_{1^d}^d(\cM)$ is given by
\eq
\xymatrix{
\cB\otimes W\otimes W\otimes \cM \ar[d]^{\id \otimes m_W\otimes \id}
	&&
		&&W \otimes \cM
\\
\cB \otimes \cB \otimes  W \otimes \cM \ar[rr]^{\id \otimes (\tau^B)\inv \otimes  \id}
	&&\cB \otimes W \otimes \cB\otimes  \cM \ar[rr]^{(\tau^B)\inv\otimes \alpha_M}
		&& W \otimes \cB\otimes  \cM \ar[u]^{\id \otimes\alpha_M}
}
\endeq
%

%

\lem\label{lem:tauMinv}
Suppose that $M$ is a \based module.
If $\sigma^M_i$ is invertible for all $i$, then $\tau^M$ is invertible. 
\endlem

\proof
For any $m\in \cM, w\in \Sigma_d$, expressing the element $\tau^M(w \otimes m)$ in terms of the  left $\cB$-basis $\{1 \otimes h_w ~|~ w\in \Sigma_d\}$, we get $\tau^M(w \otimes m) = \sum_{x \leq w} \upsilon_{w, x}(m) \otimes h_x$ for some $\upsilon_{w,x} \in \cB$.
Let $w = s_{i_1} \dots s_{i_N}$ be a reduced expression, the map $\sigma^M_w := \sigma^M_{i_1} \dots \sigma^M_{i_N}$ is well-defined by (M5).
Note that by construction of $\tau^M$, $\upsilon_{w,w} = \sigma^M_w$, and is invertible by assumption.

Since the domain $K \Sigma_d \otimes M^{\otimes d}$ has a right $\cB$-basis $\{w \otimes 1 ~|~ w\in \Sigma_d\}$, 
the matrix form of the map $\tau^M$ with respect to these two $\cB$-bases is a triangular matrix over $\End_K(\cB)$, with $\upsilon_{w, w} = \sigma^M_w$ on the diagonal. The invertibility of $\tau^M$ follows from that of $\sigma^M_{w}$.

\endproof

The following key result allows one to study the stucture of the induced modules via our wreath module construction. 

\begin{theorem}\label{thm:indhom}
Suppose that $M$ is a \based $B$-module. 
Then, the structure map $\tau^M: W \otimes M^{\otimes d} \to M^{\otimes d} \otimes \tcH_d^\chi$, $w \otimes m \mapsto \sum_x \sigma_{x,w}^M(m) \otimes h_x$ defines a $B \wr H(d)$-module homomorphism also denoted by $\tau^M$:
\[
\tau^M : \ind_{1^d}^d (M^{\otimes d}) \to M \wr \tcH^\chi_d,
\quad
 H_w \otimes m \mapsto \sum\nolimits_x \sigma_{x,w}^M(m) \otimes h_x.
\]
In particular,  $\tau^M$ is an isomorphism if $\sigma^M_i$ is invertible for all $i$.
\end{theorem}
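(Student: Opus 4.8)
The plan is to realise $\tau^M$ as the $B\wr\cH(d)$-module map that a natural $\cB$-module map induces by adjunction, and then deduce the isomorphism claim from Lemma~\ref{lem:tauMinv}. First I would record that the (reversed) PBW basis makes $B\wr\cH(d)$ free as a right $\cB=B^{\otimes d}$-module on $\{H_w\}_{w\in\Sigma_d}$, so that $\ind_{1^d}^d(\cM)=(B\wr\cH(d))\otimes_\cB\cM$ has underlying $K$-module $W\otimes\cM$ via $H_w\otimes m\leftrightarrow w\otimes m$, and the tensor--hom adjunction $\Hom_{B\wr\cH(d)}(\ind_{1^d}^d(\cM),X)\cong\Hom_\cB(\cM,\res X)$ holds for every $B\wr\cH(d)$-module $X$. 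I would apply this with $X=M\wr\tcH^\chi_d$, which is a genuine $B\wr\cH(d)$-module by Theorem~\ref{thm:MwrN} since $\tcH^\chi_d$ is a module over itself.

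The key point is that
\[
f\colon \cM\longrightarrow \res(M\wr\tcH^\chi_d),\qquad m\longmapsto m\otimes 1,
\]
is a homomorphism of $\cB$-modules: evaluating the action \eqref{def:QWPAction} on $b\otimes 1\otimes m\otimes 1$ and using $\tau^M(1\otimes m)=m\otimes 1$ (first case of \eqref{def:tauaction}) together with $\alpha_M(1\otimes-)=\id$ and $\alpha_N(-\otimes 1)=\id$ (for $N=\tcH^\chi_d$ the regular module) gives $b\cdot(m\otimes 1)=(b\cdot m)\otimes 1$, i.e.\ $f(b\cdot m)=b\cdot f(m)$. By adjunction $f$ extends uniquely to a $B\wr\cH(d)$-module map $\widehat f\colon \ind_{1^d}^d(\cM)\to M\wr\tcH^\chi_d$ with $\widehat f(H_w\otimes m)=H_w\cdot f(m)=H_w\cdot(m\otimes 1)$; this is unambiguous because the action is defined through $K\Sigma_d$ rather than through reduced expressions, and is an action by Theorem~\ref{thm:MwrN}.

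It then remains to check that $\widehat f=\tau^M$ under the identification $\ind_{1^d}^d(\cM)\equiv W\otimes\cM$. Writing $\tau^M(w\otimes m)=\sum_x\sigma^M_{x,w}(m)\otimes h_x$, the composition \eqref{def:QWPAction} sends $1\otimes w\otimes m\otimes 1$ first to $\sum_x 1\otimes\sigma^M_{x,w}(m)\otimes h_x\otimes 1$ and then to $\sum_x\alpha_M(1\otimes\sigma^M_{x,w}(m))\otimes\alpha_N(h_x\otimes 1)=\sum_x\sigma^M_{x,w}(m)\otimes h_x$, so $\widehat f(H_w\otimes m)=H_w\cdot(m\otimes 1)=\tau^M(w\otimes m)$. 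Hence $\tau^M$ is a $B\wr\cH(d)$-module homomorphism. Finally, if every $\sigma^M_i$ is invertible, Lemma~\ref{lem:tauMinv} shows $\tau^M$ is a bijective $K$-linear map, and a bijective module homomorphism is an isomorphism, which gives the last assertion.

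I do not expect a genuine obstacle here: the work is entirely bookkeeping --- confirming the underlying-space identification $\ind_{1^d}^d(\cM)\equiv W\otimes\cM$ and matching the composition \eqref{def:QWPAction} with the induced-module action. A more hands-on alternative would chase the square asserting that $\tau^M$ intertwines the two actions, which reduces to the commutativity of the based-module diagrams \hyperref[WB]{(WB)} and \hyperref[WW]{(WW)} together with the explicit description of the structure map of $\ind_{1^d}^d(\cM)$ in terms of $(\tau^B)\inv$, $m_W$ and $\alpha_M$; the adjunction argument sidesteps this computation.
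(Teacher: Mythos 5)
Your proposal is correct but takes a genuinely different route from the paper. The paper's proof (Appendix~\ref{app:indhom}) verifies by hand that $\tau^M$ intertwines the two actions, chasing a diagram \eqref{fig:BWWM} whose commutativity is assembled from the based-module diagrams \hyperref[WB]{(WB)} and \hyperref[WW]{(WW)} together with the explicit form of the structure map of $\ind_{1^d}^d(\cM)$ in terms of $(\tau^B)^{-1}$, $m_W$, and $\alpha_M$ --- precisely the ``hands-on alternative'' you mention at the end. You instead package the argument through the tensor--hom adjunction: since $B\wr\cH(d)$ is free as a right $\cB$-module by the PBW theorem, $\Hom_{B\wr\cH(d)}(\ind_{1^d}^d(\cM),X)\cong\Hom_\cB(\cM,\res X)$, and it then suffices to observe that $m\mapsto m\otimes 1$ is $\cB$-linear into $X=M\wr\tcH^\chi_d$ (an immediate computation from the first case of \eqref{def:tauaction}) and that the induced map $H_w\otimes m\mapsto H_w\cdot(m\otimes 1)$ agrees with $\tau^M$ by unwinding \eqref{def:QWPAction} with $b=1$, $n=1$. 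This is cleaner and more conceptual; the price is that it leans on Theorem~\ref{thm:MwrN} to know that $M\wr\tcH^\chi_d$ is a genuine $B\wr\cH(d)$-module in the first place, which the paper's direct chase uses only implicitly through \hyperref[WB]{(WB)} and \hyperref[WW]{(WW)}. The appeal to Lemma~\ref{lem:tauMinv} for the isomorphism claim matches the paper. One small caveat worth making explicit in a final write-up: the underlying-space identification $\ind_{1^d}^d(\cM)\cong W\otimes\cM$ does require the \emph{left} PBW basis $\{H_w(b_{i_1}\otimes\cdots\otimes b_{i_d})\}$ from \eqref{def:PBWbases} (freeness as a \emph{right} $\cB$-module), which the paper guarantees via \eqref{asm:basis}; you have correctly flagged this.
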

\proof
The proof of that $\tau^M$ is a $B\wr \cH(d)$-module homomorphism can be done via a direct verification using the commutativity of both \hyperref[WB]{(WB)} and \hyperref[WW]{(WW)}. See Section \ref{app:indhom} for details. 
The statement about the isomorphism follows from the invertibility of $\tau^M$ in Lemma \ref{lem:tauMinv}.
\endproof

\subsection{Wreath of Morphisms}

Our goal is to investigate the $B\wr\cH(d)$-module homomorphisms between two  wreath modules $M_1 \wr N_1$ and $M_2 \wr N_2$,
where $N_1$ and $N_2$ are modules over the same twisted Hecke algebra $\tcH = \tcH^\chi_d$. 
Define the following $K$-modules:
\eq
\begin{split}
&\Hom_{B}(M_1, M_2) \wr
\Hom_{\tcH}(N_1, N_2)
\\
&\quad:= 
\textup{Span}\{ f_1 \otimes \dots \otimes f_d \otimes g \in \Hom_K(M_1\wr N_1, M_2 \wr N_2) 
  ~|~
f_i \in \Hom_B(M_1, M_2),\ 
g \in \Hom_{\tcH}(N_1,N_2)
\}.
\end{split}
\endeq
It turns out that one does need restrictive conditions to ensure that $\Hom_{B}(M_1, M_2) \wr
\Hom_{\tcH}(N_1, N_2)
=
\Hom_{B\wr \cH(d)}(M_1\wr N_1, M_2 \wr N_2)$.

\begin{prop}\label{prop:Hom}
Given $m \in M^{\otimes d}_j$ and $w, x\in \Sigma_d$, let $m^w_x$ be the element in $M_j^{\otimes d}$ such that 
\[
\tau^{M_j}(w\otimes m) = \sum\nolimits_{x\in \Sigma_d} m^w_x \otimes h_x.
\]
Write $f = f_1\otimes \dots \otimes f_d$ where $f_k \in \Hom_{K}(M_1, M_2)$ for $1\leq k \leq d$.
\begin{enua}
\item
Suppose that 
$g\in \Hom_{\tcH}(N_1,  N_2)$,
$f_k \in \Hom_{B}(M_1, M_2)$ for $1\leq k \leq d$.
If  $f_k(m^w_x) = f_k(m)^w_x$ for all $k, m, w, x$, then
$f \otimes g \in \Hom_{B\wr \cH(d)}(M_1\wr N_1, M_2 \wr N_2)$. That is,
\eq\label{eq:dimcompleft}
\Hom_{B}(M_1, M_2) \wr
\Hom_{\tcH}(N_1, N_2)
\subseteq
\Hom_{B\wr \cH(d)}(M_1\wr N_1, M_2 \wr N_2).
\endeq
In particular, \eqref{eq:dimcompleft} holds if $\tau^{M_j}$'s are of the form $\tau^{M_j}(w\otimes m) = \sigma^{M_j}_w(m)\otimes h_w$ where the maps $\sigma^{M_j}_w$ are all invertible. 
\item 
If $\dim \Hom_B(M_1, M_2) \leq 1$,
then 
\[
\Hom_{B}(M_1, M_2) \wr
\Hom_{\tcH}(N_1, N_2)
\cong
\Hom_{B\wr \cH(d)}(M_1\wr N_1, M_2 \wr N_2).
\]
\end{enua}
\end{prop}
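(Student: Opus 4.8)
The plan is to prove part (a) directly from the commutative-diagram description of the wreath-module action, and then deduce part (b) by a dimension count that pins down $\Hom_{B\wr\cH(d)}(M_1\wr N_1, M_2\wr N_2)$ exactly once the $\Hom$-space over $B$ is at most one-dimensional.

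\textbf{Part (a).} Suppose $g\in\Hom_{\tcH}(N_1,N_2)$ and each $f_k\in\Hom_B(M_1,M_2)$, and write $f=f_1\otimes\cdots\otimes f_d$, which is automatically a $\cB$-module map $M_1^{\otimes d}\to M_2^{\otimes d}$. To check that $f\otimes g$ intertwines the $B\wr\cH(d)$-actions, it suffices, because $B\wr\cH(d)$ is generated by $\cB$ and $H_1,\dots,H_{d-1}$, to check compatibility with the action of $\cB$ and with each $H_w$; and by the factorization \eqref{def:QWPAction} of $\alpha_{MN}$ through $\id\otimes\tau^{M}\otimes\id$ followed by $\alpha_M\otimes\alpha_N$, this reduces to two assertions: that $f$ commutes with $\alpha_M$ (immediate, as just noted) and $g$ with $\alpha_N$ (the hypothesis $g\in\Hom_{\tcH}(N_1,N_2)$), and that $(f\otimes\id_{\tcH})\circ\tau^{M_1}=\tau^{M_2}\circ(\id_W\otimes f)$ as maps $W\otimes M_1^{\otimes d}\to M_2^{\otimes d}\otimes\tcH$. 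Evaluating the latter on $w\otimes m$ and using the notation $\tau^{M_j}(w\otimes m)=\sum_x m^w_x\otimes h_x$, the left side is $\sum_x f(m)^w_x\otimes h_x$ (by definition of the coefficients for $M_2$, after applying $f$) — wait, more precisely: the left side is $\sum_x f(m^w_x)\otimes h_x$ while the right side is $\sum_x f(m)^w_x\otimes h_x$, so the two agree precisely under the hypothesis $f(m^w_x)=f(m)^w_x$ for all $m,w,x$ (applied coordinatewise, this is the stated condition $f_k(m^w_x)=f_k(m)^w_x$). This gives \eqref{eq:dimcompleft}. For the ``in particular'' clause, when $\tau^{M_j}(w\otimes m)=\sigma^{M_j}_w(m)\otimes h_w$ one has $m^w_x=\delta_{x,w}\sigma^{M_j}_w(m)$, so the needed identity becomes $f_k\circ\sigma^{M_1}_w=\sigma^{M_2}_w\circ f_k$; since $\sigma^{M_j}_w$ is built from the $\sigma^{M_j}_i$'s (well-defined by (M5)) and each $\sigma^{M_j}_i$ is the restriction of the structure map which, by (M2), intertwines the $\cB$-actions twisted by $\sigma_i$, the $B$-linearity of $f_k$ together with invertibility forces this compatibility — one checks it first for $w=s_i$ using (M2) and the fact that $f_k$ is $B$-linear, then extends multiplicatively.

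\textbf{Part (b).} Assume $\dim\Hom_B(M_1,M_2)\le 1$. By part (a) it remains to prove the reverse inclusion $\Hom_{B\wr\cH(d)}(M_1\wr N_1,M_2\wr N_2)\subseteq\Hom_B(M_1,M_2)\wr\Hom_{\tcH}(N_1,N_2)$, i.e.\ that every $B\wr\cH(d)$-module map $\Phi$ is a $K$-span of elementary tensors $f_1\otimes\cdots\otimes f_d\otimes g$ of the stated type. The key point is that $M_1\wr N_1=M_1^{\otimes d}\otimes N_1$ and $M_2\wr N_2=M_2^{\otimes d}\otimes N_2$ are in particular $\cB=B^{\otimes d}$-modules, and restricting $\Phi$ to $\cB$ we get $\Phi\in\Hom_{\cB}(M_1^{\otimes d}\otimes N_1, M_2^{\otimes d}\otimes N_2)$; since $N_1,N_2$ are free $K$-modules on which $\cB$ acts trivially (the $\cB$-action on $M\wr N$ ignores the $N$-factor), this $\Hom$-space is $\Hom_{\cB}(M_1^{\otimes d},M_2^{\otimes d})\otimes\Hom_K(N_1,N_2)$. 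Now $\Hom_{\cB}(M_1^{\otimes d},M_2^{\otimes d})$ need not in general be $\Hom_B(M_1,M_2)^{\otimes d}$, but when $\dim\Hom_B(M_1,M_2)\le1$ one checks — this is the technical heart — that it does collapse to $\Hom_B(M_1,M_2)^{\otimes d}$ (at most one-dimensional, spanned by $\phi^{\otimes d}$ for a generator $\phi$), so that $\Phi=\sum_j c_j\,\phi^{\otimes d}\otimes g_j$ for some $c_j\in K$ and $g_j\in\Hom_K(N_1,N_2)$; absorbing $c_j$, write $\Phi=\phi^{\otimes d}\otimes g$ with $g\in\Hom_K(N_1,N_2)$. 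It then remains to impose $H_i$-equivariance of $\Phi$ and show it forces $g\in\Hom_{\tcH}(N_1,N_2)$: using the factorization of the action through $\tau^{M_j}$ as in part (a), $H_i$-equivariance of $\phi^{\otimes d}\otimes g$ is equivalent to the pair of identities $(\phi^{\otimes d}\otimes\id)\circ\tau^{M_1}_{s_i}=\tau^{M_2}_{s_i}\circ(\id\otimes\phi^{\otimes d})$ and $g\circ(h_i\cdot)=(h_i\cdot)\circ g$; the first holds automatically because $\sigma^{M_j}_i,\rho^{M_j}_i$ are the structure-map components and $\phi$ is $B$-linear (argue via (M2), exactly as in the ``in particular'' clause of (a)), and the second says precisely $g\in\Hom_{\tcH}(N_1,N_2)$ since $\tcH$ is generated by the $h_i$. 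Combining the two inclusions gives the asserted isomorphism, which is in fact an equality of subspaces of $\Hom_K(M_1\wr N_1,M_2\wr N_2)$.

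\textbf{Main obstacle.} The delicate step is the collapse $\Hom_{\cB}(M_1^{\otimes d},M_2^{\otimes d})=\Hom_B(M_1,M_2)^{\otimes d}$ under the hypothesis $\dim\Hom_B(M_1,M_2)\le1$: over a general commutative ring $K$ with $M_i$ merely free (not necessarily finitely generated or projective) this requires care, since $\Hom$ does not in general commute with infinite tensor products, and one may need to work slot-by-slot — fixing all but one tensor factor and using an evaluation/coevaluation argument against a $K$-basis of the other factors — and to invoke freeness of $M_i$ over $K$ to split off the relevant summands; the case $\dim\Hom_B(M_1,M_2)=0$ (where both sides vanish) should be separated out first, so that one may assume a generator $\phi$ exists. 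Everything else is a mechanical unwinding of \eqref{def:QWPAction} and the defining relations, using (M2) and (M5) to handle the $\sigma$- and $\rho$-components.
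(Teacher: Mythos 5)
Your proof follows essentially the same route as the paper for both parts.  For part (a) you verify that $f\otimes g$ intertwines the action by checking the commutativity of the diagram obtained from the factorization $\alpha_{MN}=(\alpha_M\otimes\alpha_N)\circ(\id\otimes\tau^M\otimes\id)$ — this is exactly the paper's argument.  For part (b) you restrict to $\cB$, use one-dimensionality of $\Hom_B(M_1,M_2)$ to pin down the $M$-part of any morphism as a scalar multiple of $\phi_0^{\otimes d}$, and then extract $\tcH$-linearity of $g$ from $H_i$-equivariance; the paper does the same, just more tersely.  Your explicit flag that the step $\Hom_\cB(M_1^{\otimes d},M_2^{\otimes d})\cong\Hom_B(M_1,M_2)^{\otimes d}$ requires care is a genuine point: the paper asserts in one sentence that any $\phi$ has the form $m\otimes n\mapsto m'\otimes g(n)$ without proof, and that claim is the same collapse.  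Under finiteness assumptions (which hold in all applications in the paper) it is straightforward; your precaution is warranted for the general statement.

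There is one genuine gap in your argument, and it is worth naming precisely.  In the ``in particular'' clause of (a) you claim $(f_k\otimes f_{k+1})\circ\sigma^{M_1}=\sigma^{M_2}\circ(f_k\otimes f_{k+1})$ follows from (M2) together with $B$-linearity of $f_k$; and in (b) you claim $(\phi^{\otimes d}\otimes\id)\circ\tau^{M_1}_{s_i}=\tau^{M_2}_{s_i}\circ(\id\otimes\phi^{\otimes d})$ ``holds automatically.''  Neither inference is valid from (M2) alone: (M2) gives only the $\sigma_i$-semilinearity of $\sigma^{M_j}_i$ over $\cB$, which does not force $f_k$ (nor $\phi_0^{\otimes d}$) to intertwine $\sigma^{M_1}_i$ and $\sigma^{M_2}_i$.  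Indeed, when $\sigma^{M_j}$ is the flip and $f_k\ne f_{k+1}$, the identity $(f_k\otimes f_{k+1})\circ\sigma^{M_1}=\sigma^{M_2}\circ(f_k\otimes f_{k+1})$ visibly fails.  What actually saves part (b) is that $\dim\Hom_B(M_1,M_2)\le 1$ forces $f_1,\dots,f_d$ to all be scalar multiples of one fixed $\phi_0$, so the elementary tensor is $c\,\phi_0^{\otimes d}$; one then needs the compatibility $\phi_0^{\otimes d}\circ\sigma^{M_1}_i=\sigma^{M_2}_i\circ\phi_0^{\otimes d}$ and $\phi_0^{\otimes d}\circ\rho^{M_1}_i=\rho^{M_2}_i\circ\phi_0^{\otimes d}$, which is precisely the hypothesis ``$f_k(m^w_x)=f_k(m)^w_x$'' in part (a) and should be invoked as such, not derived from (M2).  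To be fair, the paper's own remark ``$\tau^{M_j}$ commutes with scalar multiplications'' is a sound justification only when $M_1=M_2$ (so $\phi_0$ is a scalar), and in the paper's actual uses the compatibility holds by inspection ($\sigma^{M_j}=$ flip, $\rho^{M_j}=0$, and $\phi_0^{\otimes d}$ always commutes with the flip); so you have inherited a gap that exists in the source rather than introduced a new one.  Fix it by stating the $\phi_0$-compatibility as an explicit hypothesis or verifying it in the cases of interest, rather than attributing it to (M2).
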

\proof
For part (a), the assumptions imply the diagram below commutes:
\eq
\xymatrix{
\cB \otimes W\otimes\cM_1 \otimes N_1 
	\ar[d]^{\id  \otimes \id \otimes f \otimes g} 
	\ar[rr]^{\id \otimes \tau^{M_1}\otimes \id} 
&& \cB \otimes\cM_1\otimes\tcH\otimes N_1
	\ar[d]^{\id  \otimes f \otimes \id \otimes g} 
	\ar[rr]^{\alpha_{M_1} \otimes \alpha_{N_1}}
&& \cM_1\otimes N_1
	\ar[d]^{f \otimes g} 
\\
\cB\otimes W\otimes\cM_2\otimes N_2 
	\ar[rr]^{\id \otimes \tau^{M_2}\otimes \id} 
&& \cB\otimes \cM_2\otimes\tcH\otimes N_2
	\ar[rr]^{\alpha_{M_2} \otimes \alpha_{N_2}}
&& \cM_2\otimes N_2
}
\endeq
Hence, $f_1 \otimes \dots \otimes f_d \otimes g$ is a $B \wr\cH(d)$-module homomorphism. 

For part (b), if $\Hom_{B}(M_1, M_2) = 0$, then both sides are zero.
Assume now $\Hom_{B}(M_1, M_2)\cong K$. 
Then, for any $m\in M_1$, there is $m' \in M_2$ such that any $B$-module homomorphism $f$ takes $m$ to $cm'$ for some $c\in K^{\times}$.
That is, any $\phi \in \Hom_{B \wr \cH(d)}(M_1 \wr N_1, M_2 \wr N_2)$ is of the form $\phi(m\otimes n) = \sum_i m' \otimes g_i(n)$ for some $K$-linear homomorphism $g_i: N_1 \to N_2$. 

For each $w\in \Sigma_d$, recall that $\tau^{M_j}(w \otimes m) = \sum_x m^w_x \otimes h_x$. 
Since $\tau^{M_j}$ commutes with scalar multiplications, $(m^w_x)' = (m')^w_x$ for all $m \in M_1$ and $x,w \in \Sigma_d$.
By equating
$\phi(h_w \cdot (m\otimes n)) = \sum_x \phi(  m^w_x \otimes (h_x\cdot n)) =  \sum_{i,x}  (m^w_x)' \otimes g_i(h_x\cdot n) $ and
$h_w \cdot (\phi(m\otimes n)) = h_w \cdot \sum_i m' \otimes g_i(n) =  \sum_{i,x}  (m)'^w_x \otimes h_x\cdot g_i( n)$,
it follows that $g_i$'s are $\tcH$-module homomorphisms, which finishes the proof.
%
%
%
\endproof
%

\subsection{Wreath Functor}
Now, we show that the process of producing a wreath module with respect to the same \based $B$-module $M$ is functorial.

Fix a \based $B$-module $(M, \tau^M, \tcH^\chi_d)$, where we abbreviate the twisted Hecke algebra by $\tcH := \tcH^\chi_d$. 
Define the corresponding {\em wreath functor} $M\wr-: \cH\textup{-mod} \to B\wr\cH(d)\textup{-mod}$ by
\eq
 f \in \Hom_\cH(N_1,N_2) 
\quad
\mapsto
\quad
M\wr f:=  \id_\cM \otimes f \in \Hom_{B \wr \cH(d)}(M \wr N_1, M \wr N_2 ). 
\endeq
\prop\label{prop:exactness}
Suppose that $B$ is a \based module and $\End_B(M)\cong K$.
The wreath functor $M \wr - : \text{$\tcH$-mod} \to \text{$B \wr \cH(d)$-mod}$ is exact and fully faithful.
\endprop
\proof
Theorem \ref{thm:MwrN} asserts that for every $\cH$-module $N$, $M \wr N$ is a $B \wr \cH(d)$-module. 
Proposition \ref{prop:Hom} shows that for $f \in \Hom_{\tcH}(N_1, N_2)$, the corresponding morphism $M \wr f$ is indeed in $\Hom_{B \wr \cH(d)}(M \wr N_1, M \wr N_2)$. 
Hence, to verify that $M \wr -$ is a functor, it suffices to show that $M \wr (f \circ g) = (M \wr f) \circ (M \wr g)$ and $M \wr \id = \id$, which are done by a direct verification.

Let $F_0 : \cH\textup{-mod} \to K\textup{-mod}$ and $F_1 : B \wr \cH(d)\textup{-mod} \to K\textup{-mod}$ be the forgetful functors. 
Then, $(\cM \otimes - ) \circ F_0 = F_1 \circ (M \wr -)$, where $\cM \otimes -$ is an exact endofunctor on $K$-mod. 
Thus, the exactness of $M \wr -$ follows from the exactness of $\cM \otimes -$, due to the PBW basis theorem (see \eqref{asm:basis}).

Note that $M\wr-$ is fully faithful if and only if  $f \mapsto M\wr f$ defines a bijection between $\Hom_{\tcH}(N_1,N_2)$ and $\Hom_{B \wr \cH(d)}(M \wr N_1, M \wr N_2 )$,
which follows from Proposition \ref{prop:Hom}, since $\End_B(M)\cong K$.
\endproof

%
\subsection{Necessary Conditions for Non-Trivial Extensions}
We conclude this section by investigating the extensions of wreath modules. 
The following lemma establishes the necessary conditions for $\Ext^1$ between wreath modules to be nonzero. 
This property leads directly to a homological interpretation of the dominance order that will be defined in Definition \ref{def:domPO}. 

\begin{lem}\label{lem:gHuExt2}
Suppose that $S = 0$.
If $\Ext_{B\wr \cH(d)}^1(M_1 \wr N_1, M_2 \wr N_2) \neq 0$, where $M_i$'s are finite-dimensional \based $B$-modules and $N_i \in \tcH$-mod are  finite-dimensional.
Then, either
\enu[\textup{(}i\textup{)}]
\item $\Ext^1_B(M_1, M_2) \neq 0$, or
\item $\Hom_B(M_1, M_2) \neq 0$ and $\Ext^1_{\tcH}(N_1, N_2) \neq 0$.
\endenu
\end{lem}
\proof
We may assume that $M_1 \not\cong M_2$. 
Otherwise, 
$M_1 \cong M_2$ implies that $\Hom_B(M_1, M_2) \neq 0$.
Moreover, it follows from Proposition \ref{prop:exactness} that
$
\Ext^1_{B\wr \cH(d)}(M_1\wr N_1, M_2 \wr N_2) \cong \Ext^1_{\tcH}(N_1, N_2)
$,
and hence (ii) holds.

Next, since $\Ext_{B\wr \cH(d)}^1(M_1 \wr N_1, M_2 \wr N_2) \neq 0$,
there exists a non-split short exact sequence of ${B\wr \cH(d)}$-modules:
\[
0 \to M_2 \wr N_2 \overset{f}{\to} E \overset{g}{\to} M_1 \wr N_1 \to 0.
\]
Since $\tau^M(w\otimes m) = \sigma_w(m) \otimes h_w$,
Proposition \ref{prop:Hom}(a) holds.
That is, any ${B\wr \cH(d)}$-module homomorphism is a $\cB \otimes \tcH$-module homomorphism, and hence the following is a short exact sequence of $\cB \otimes \tcH$-modules:
\eq\label{eq:911SES2}
0 \to V_2 \overset{f}{\to} E \overset{g}{\to} V_1 \to 0,
\endeq
where $V_i := M_i^{\otimes d} \otimes N_i \in \cB \otimes \tcH$-mod.
We claim that the short exact sequence \eqref{eq:911SES2} is non-split, 
and thus $\Ext^1_{\cB \otimes \tcH}(V_1, V_2) \neq 0$.
Then, this lemma follows from the Kunneth formula, since
\eq
0 \neq \big(
\Ext^1_B(M_1, M_2)^{\otimes d} \otimes \Hom_{\tcH}(N_1, N_2)
\big)
\oplus
\big(
\Hom_B(M_1, M_2)^{\otimes d} \otimes \Ext^1_{\tcH}(N_1, N_2) 
\big).
\endeq
Now we prove the claim. Suppose that \eqref{eq:911SES2} splits. 
That is, there is a $\cB\otimes \tcH$-module homomorphism $s: V_1 \to E$ such that $g\circ s = \id_{V_1}$. We are done if $s$ is an ${B\wr \cH(d)}$-module homomorphism, or equivalently,
$H_i\cdot s(x) = s(H_i\cdot x)$ for all $x\in V_1$.
In other words, we want to show that the  map below is zero:
\eq
C_i: V_1 \to E,
\quad 
x \mapsto H_i\cdot s(x) - s(H_i\cdot x).
\endeq
First, observe that $gC_i(x) = g(H_i\cdot s(x)) - g\circ s(H_i\cdot x) = H_i\cdot g\circ s(x) - H_i\cdot x = 0$.
That is, $C_i(x) \in \ker g = \textup{im} f$, and hence the codomain of $C_i$ becomes $V_2$.
Next, since $C_i$ is $\cB$-linear, for any $b \in \cB$ and $x\in V_1$:
\eq
    C_i(b\cdot x) =H_i\cdot s(b\cdot x) - s(H_ib\cdot x)
    =(H_ib)\cdot s(x) - s(\sigma_i(b)H_i\cdot x)
    = \sigma_i(b) C_i(x).
\endeq
Thus, the codomain of $C_i$ becomes the $\cB$-module $V_2^{\sigma_i}$ with underlying space $V_2$ whose action is twisted by $\sigma_i$. 
Hence,
\eq
C_i \in \Hom_{\cB\otimes \tcH}(V_1, V_2^{\sigma_i}) 
\cong \Hom_\cB(M_1^{\otimes d}, (M_2^{\otimes d})^{\sigma_i}) 
\otimes \Hom_{\tcH}(N_1,N_2).
\endeq
Since $\Hom_\cB(M_1^{\otimes d}, (M_2^{\otimes d})^{\sigma_i}) 
\cong \Hom_\cB(M_1^{\otimes d}, M_2^{\otimes d}) \cong \Hom_B(M_1, M_2)^{\otimes d}$, 
it follows from Proposition \ref{prop:Hom}(b) that $C_i$ is a ${B\wr \cH(d)}$-module homomorphism. 
Using the quadratic relation $H_i^2 = R_i$, we obtain
\eq
\begin{split}
0 &= (H_i^2-R_i)\cdot s(x)
= H_i \cdot (H_i \cdot s(x)) - s(R_i \cdot x) 
\\
&= H_i \cdot (H_i \cdot s(x)) - s(H_i \cdot (H_i \cdot x)) 
\\
&= H_i\cdot C_i(x) + C_i (H_i \cdot x)
\\
&= 2 C_i (H_i \cdot x).
\end{split}
\endeq
Since $\textup{char}(K) \neq 2$, $C_i$ is the zero map. The claim is proved.
\endproof

\section{Important Examples of Wreath Modules}\label{sec:WMQWP}
In this section, we provide details to the construction of wreath modules for several quantum wreath products. 

\subsection{Wreath Index Set and Dominance Order} \label{sec:indexset}
Assume that $I_B$ is a poset that indexes a certain set of modules for the base algebra $B$. 
Typically, $I_B$ indexes the complete set of non-isomorphic finite-dimensional simple $B$-modules, but we want to keep the flexibility here.
The goal here is to give a partially-ordered index set $\Omega$ for simple modules (up to isomorphism) of the quantum wreath product $ B \wr \cH(d)$. 

Recall that $\Pi_m$ is the set of partitions of $m$, $\Pi = \bigcup_{m \geq 0} \Pi_m$,
and  ${}^\#\lambda := \sum_{i} \lambda_i$.
Denote the set of $I_B$-partitions of $d$ by
\eq
\textstyle
\Pi_{I_B}(d) := \left\{
\bblambda: I_B \to \Pi
~\middle|~
\sum_{\nu \in I_B} {}^\#\bblambda(\nu) = d
\right\}.
\endeq
Write $\Omega:=  \Pi_{I_B}(d) $ for brevity.
Recall the characteristic function $e_\lambda^\nu \in \Omega$ defined in \eqref{def:charfcn}.
\begin{Def}[Dominance Order]
\label{def:domPO}
The set $\Pi_{I_B}(d)$ has a partial order $\leq$ induced from $(I_B,\leq)$ given by $\bblambda \leq \bblambda'$ if and only if 
\eq\label{def:POIA}
\sum_{j \leq k}\bblambda(\nu)_j
+
\sum_{\gamma < \nu \in I_B} {}^\#\bblambda(\gamma)  
\ \leq \ 
\sum_{j \leq k}\bblambda'(\nu)_j
+
\sum_{\gamma < \nu \in I_B} {}^\#\bblambda'(\gamma)
\qquad
(\textup{for all}
~ \nu\in I_B, 
~k\geq 1).
\endeq
\end{Def}
Therefore, suppose that $\lambda \in I_B$ and $\nu, \nu' \in \Pi_d$, the following implications are immediate:
\eq 
 \lambda < \lambda' \in I_B \Rightarrow  e_\lambda^\nu < e_{\lambda'}^{\nu'},
\quad
\textup{and}
\quad
\nu \triangleright \nu' \in \Pi_d \Rightarrow e_\lambda^\nu < e_{\lambda}^{\nu'}.
\endeq
\lem\label{lem:PO}
Let $\mu = (\mu_1, \dots, \mu_r) \vDash d$. Suppose that for all $1\leq i \leq r$,
$\lambda^{(i)} \leq \gamma^{(i)}$ in $I_B$,
$\nu^{(i)}, \delta^{(i)} \vdash \mu_i$,
and
\[
\nu^{(i)} \triangleleft \delta^{(i)}
\quad
\textup{if}
\quad
\lambda^{(i)} = \gamma^{(i)}.
\]
Then, $\sum_i e_{\lambda^{(i)}}^{\nu^{(i)}} < \sum_i e_{\gamma^{(i)}}^{\delta^{(i)}}$ in $\Omega$.
\endlem
\proof
It follows from the fact that for all $\nu \in I_B$,  
\eq
\{ \gamma \in I_B ~|~ \gamma < \nu\} \cap \{\lambda^{(1)}, \dots, \lambda^{(r)}\} \subseteq 
\{ \gamma \in I_B ~|~ \gamma < \nu\} \cap \{\gamma^{(1)}, \dots, \gamma^{(r)}\},
\endeq
and hence \eqref{def:POIA} must hold.
\endproof

Sometimes, it is useful to fix an enumeration $\{\lambda^{(1)}, \dots, \lambda^{(s)} \}$ of $I_B$ that is compatible with the partial order $\leq$, i.e.,  $\lambda^{(i)}<\lambda^{(j)}$ in $I_B$ then $i<j \in \ZZ$.
Given $\bblambda:I_B\to \Pi$ with support supp$(\bblambda) = \{\lambda^{(k_1)}, \dots, \lambda^{(k_r)}\}$ where $1\leq k_1 < \dots < k_r \leq s$,
define a composition $\mu = \mu(\bblambda) \vDash d$ by $\mu_i := {}^\#\bblambda(\lambda^{(k_i)})$, and a multipartition
\eq
\overline{\bblambda} := (
\underset{\mu_1\textup{ times}}{\underbrace{\lambda^{(k_1)}, \dots, \lambda^{(k_1)}}}, 
\underset{\mu_2\textup{ times}}{\underbrace{\lambda^{(k_2)}, \dots, \lambda^{(k_2)}}}, 
\dots, 
\underset{\mu_r\textup{ times}}{\underbrace{\lambda^{(k_r)}, \dots, \lambda^{(k_r)}}} 
),
\endeq
equivalently, $\overline{\bblambda}$ is a map $\{1,\dots,d\} \to \Pi$ such that $\overline{\bblambda}(i) := \lambda^{(k_j)}$ where $k_1 + \dots + k_{j-1} < i \leq k_1 + \dots + k_j$.
%
\exa\label{ex:lambdabar}
Let $\lambda^{(1)}, \lambda^{(2)}, \lambda^{(3)}$ be distinct elements in $I_B$ such that $\lambda^{(1)} < \lambda^{(i)}$ for $i=2,3$, and let 
\[
\bblambda :=e_{\lambda^{(2)}}^{(1)} + e_{\lambda^{(3)}}^{(1)},
\quad
\bbmu := e_{\lambda^{(1)}}^{(1,1)}.
\]
Then, supp$(\bblambda) = \{ \lambda^{(2)}, \lambda^{(3)}\}$, supp$(\bbmu) = \{\lambda^{(1)}\}$, and
\[
\overline{\bblambda} = (\lambda^{(2)}, \lambda^{(3)}),
\quad
\overline{\bbmu} = (\lambda^{(1)}, \lambda^{(1)}).
\]
It then follows from Lemma \ref{lem:PO} that $\bblambda > \bbmu$, since
 $\lambda^{(i)} > \lambda^{(1)}$ for $i=2,3$.
\endexa
\exa\label{ex:AKindex}
For the (modified) Ariki--Koike algebra over $K$ or the group algebra over $\CC$ for the complex reflection group $C_m \wr \Sigma_d$ of type $G(m,1,d)$, the irreducible modules over $B$ are indexed by $I_B = \{1, \dots, m\}$ using the natural total order.
Recall that the base algebras are 
\[
B = \frac{K[t]}{\prod_{i=1}^m (t - u_i)},
\quad
\textup{or}
\quad 
B = \CC C_m := \frac{\CC[t]}{(t^m-1)},
\] 
respectively, where $u_i \in K$ are elements such that $\prod_{i>j} (u_i - u_j)$ is invertible.  
For any $\lambda \in I_B$, the corresponding simple module is the one dimensional module
$S^\lambda$ on which $t$ acts by $u_i$ and by $\xi^i$, respectively, where $\xi \in \CC^\times$ is a primitive $m$th root of unity. 

Then, each $\bblambda \in \Omega$ can be identified as a multipartition  $\bblambda \equiv (\bblambda(1), \bblambda(2), \dots, \bblambda(m))$ of $d$.
Moreover, the partial order \eqref{def:POIA} is precisely the usual dominance order on the set of multipartitions, which is defined using the lexicographical order.
For example, when $m=2=d$, the partial order on $\Omega$ is given by
\eq
\left(\begin{array}{l} 2 \mapsto\varnothing \\ 1\mapsto (1,1)\end{array}\right)
\unlhd
\left(\begin{array}{l} 2 \mapsto \varnothing\\ 1\mapsto (2)\end{array}\right)
\unlhd
\left(\begin{array}{l} 2 \mapsto (1)\\ 1\mapsto (1)\end{array}\right)
\unlhd
\left(\begin{array}{l} 2 \mapsto (1,1)\\ 1\mapsto \varnothing\end{array}\right)
\unlhd
\left(\begin{array}{l} 2 \mapsto (2) \\ 1\mapsto \varnothing\end{array}\right),
\endeq
which is equivalent to that 
$(\varnothing, (1,1)) \unlhd (\varnothing, (2)) \unlhd ((1), (1)) \unlhd ((1,1), \varnothing) \unlhd ((2), \varnothing)$.
\endexa
\subsection{Examples of Wreath Index Sets}
\exa\label{ex:PQWPindex}
Let $B = F[x]$ or $F[x^{\pm1}]$ be the ring of (Laurent) polynomials.
Consider any  (infinite-dimensional) $B$-module $T$ with $F$-basis $\{v_i \}_{i \in \ZZ}$ such that $x$ acts by $x\cdot v_i := v_{i+n}$ for some fixed  $n \in \ZZ_{>0}$.

For the purpose of understanding certain Kashiwara-Miwa-Stern tensor space $T^{\otimes d}$ (see Section \ref{sec:WMPQWP}), we need to consider the set of all simple submodules of $T$.
Each simple submodule is of the form $V_j := \textup{Span}_F\{v_k ~|~ k \in  j +n\ZZ\} \subseteq T$ for $1\leq j \leq n$.

Thus, we may set $I_B = \{1 < \dots < n\}$, and then identify
$\Omega$ with the set of multipartitions of $d$ of $n$ parts, via $\bblambda \mapsto (\bblambda(1), \dots, \bblambda(n))$.
We will see that $\Omega$ is useful in describing all simple submodules of $T^{\otimes d}$. 
\endexa
\exa\label{ex:Huindex}
For the Hu algebra or the group algebra of $\Sigma_m \wr \Sigma_2$, 
the base algebras are the Hecke algebra $B= \cH_q(\Sigma_m)$ and the group algebra $K\Sigma_m$, respectively.
The irreducibles over  $B$ are in bijection with the set of Specht modules, and hence are indexed by the set $I_B =\Pi_m$ of partitions of $m$. 

In the special case $m=2$,  the  poset structure $I_B =\{(1,1) < (2)\}$  induces a partial order on $\Omega$ given by
\eq\label{eq:HuPoset}
\left(\begin{array}{l}  (2)\mapsto\varnothing \\ (1,1) \mapsto (1,1)\end{array}\right)
\unlhd
\left(\begin{array}{l} (2) \mapsto \varnothing\\ (1,1)\mapsto (2)\end{array}\right)
\unlhd
\left(\begin{array}{l} (2) \mapsto (1)\\ (1,1)\mapsto (1)\end{array}\right)
\unlhd
\left(\begin{array}{l} (2) \mapsto (1,1)\\ (1,1)\mapsto \varnothing\end{array}\right)
\unlhd
\left(\begin{array}{l} (2) \mapsto (2) \\ (1,1)\mapsto \varnothing\end{array}\right).
\endeq
Note that the decorated orbits in \cite{Hu02} are identified as
$[\lambda, \lambda]_+ \equiv e_\lambda^{(2)}$, $[\lambda, \lambda]_- \equiv e_\lambda^{(1,1)}$, and $[\lambda, \mu] \equiv e_\lambda^{(1)} + e_\mu^{(1)}$, and hence 
 \eqref{eq:HuPoset} becomes 
\[
 [(1,1), (1,1)]_- \unlhd [(1,1), (1,1)]_+ \unlhd [(1,1), (2)] \unlhd [(2), (2)]_- \unlhd [(2), (2)]_+.
\]
\endexa

\subsection{Transpose Map on the Index Set}\label{sec:transpose}
Now, assume that the poset $I_B$ admits the notion of transposition, i.e., an order-reversing involution $t: \nu \mapsto \nu^t$.
Then, we define the transpose map on $\Omega$ via $\bblambda \mapsto \bblambda^t$, where
\eq\label{def:t}
\bblambda^t : I_B \to \Pi,
\quad
\nu \mapsto \bblambda(\nu^t)^t
\endeq
It is clear that $t$ is an involution.
\exa
Consider the Hu algebra $B \wr \cH(2)$ with  base algebra $B = \cH_q(\Sigma_d)$.
The Specht modules of the form $S^\bblambda = S(\lambda^{(1)}, \lambda^{(2)})$ for $\lambda^{(1)} \neq \lambda^{(2)} \in \Pi_m$ corresponds to the multipartition $\bblambda$ supported on $\{\lambda^{(1)}, \lambda^{(2)}\}$ with the same value $(1)$.
Thus, $\bblambda^t$ is supported on $\{(\lambda^{(1)})^t, (\lambda^{(2)})^t\}$ with the same value $(1)$.
That is, the  induced transpose map $S^\bblambda \mapsto S^{\bblambda^t}$ takes $S(\lambda^{(1)}, \lambda^{(2)})$ to $S((\lambda^{(1)})^t, (\lambda^{(2)})^t)$ for $\lambda^{(1)} \neq \lambda^{(2)} \in \Pi_m$.

The other kind of Specht modules $S^\bblambda = S(\lambda, \lambda)_\pm$ correspond to the multipartitions 
\eq 
\bblambda: \nu \mapsto
\begin{cases}
(2) &\tif \nu = \lambda;
\\
\varnothing &\textup{otherwise},
\end{cases}
\quad\textup{and}\quad
\bblambda: \nu \mapsto
\begin{cases}
(1,1) &\tif \nu = \lambda;
\\
\varnothing &\textup{otherwise},
\end{cases}
\endeq
respectively. 
Hence, the induced transpose map swaps the two Specht modules $S(\lambda,\lambda)_\pm$ and $S(\lambda^t,\lambda^t)_\mp$ for any $\lambda \in \Pi_m$. 
\endexa

\subsection{Wreath Modules over Ariki--Koike Algebras}\label{sec:WMAK}
Following Section \ref{sec:mAK}, let $B\wr \cH(d)$ be the modified Ariki--Koike algebras, and let
$\tcH_d^{\chi}$ be the usual Hecke algebra with generators $h_1, \dots h_{d-1}$ subject to the type A braid relations as well as the quadratic relation $h_i^2 - \SK h_i - \RK = 0$, where $\SK = v-v^{-1}, \RK = 1 \in K$.

We obtain the following wreath modules as a consequence of Theorem~\ref{thm:MwrN}.
\begin{cor}\label{cor:MwrNAK}
Let $B = \frac{K[t]}{\prod_{i=1}^m(t-u_i)}$, and let $\gamma \in \{v, -v^{-1}\}$.
\begin{enua}
\item
Suppose that $M \in B$-mod is a simple module, and $N$ is an $\tcH^\chi_d$-mod.  
Then, $M\wr N \in B \wr\cH(d)$-mod, on which the action is given by, for $b\in B^{\otimes 2}, m \in M^{\otimes 2}$, $y\in N$:
\[
b \cdot (m \otimes y) = (b \cdot m) \otimes y,
\quad
H_1 \cdot (m \otimes y) =  \gamma (m \otimes \cdot y).
\]
\item
Suppose that
$M_1, \dots, M_r \in B$-mod are simple modules,
and that each $N_i$ is an $\tcH^{\chi}_{\mu_i}$-module. 
Then, 
\[
(M_1 \wr N_1) \otimes (M_2 \wr N_2) \otimes  \dots \otimes (M_r \wr N_r) 
\equiv
M_1^{\otimes \mu_1} \otimes \dots \otimes M_r^{\otimes \mu_n} \otimes N_1 \otimes \dots \otimes N_r
\] 
is a module over the parabolic subalgebra $B \wr \cH(\mu)$ of the modified Ariki--Koike algebra.
\end{enua}
\end{cor}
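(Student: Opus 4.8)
The plan is to derive both parts from Theorem~\ref{thm:MwrN} together with the case analysis of Section~\ref{sec:mAK}. For part~(a), the first step is to exhibit $M$ as a \based $B$-module for $B\wr\cH(d)$. Since the $u_i$ are distinct and $\prod_{i>j}(u_i-u_j)$ is invertible, $B\cong K^m$ and the simple module $M=K_a$ is one-dimensional, so $M^{\otimes d}$ is one-dimensional and every $K$-endomorphism in sight is a scalar. Taking $\sigma^M_i=0$ and $\rho^M_i=\gamma\,\id_{M^{\otimes d}}$ for all $i$, the criterion of Theorem~\ref{thm:basedmod} is verified exactly as in Section~\ref{sec:mAK}: conditions \eqref{def:taubr1}--\eqref{def:taubr3} become trivial identities once $\sigma^M=0$ is imposed, \eqref{def:M4} reduces to $\gamma^2=(v-v^{-1})\gamma+1$ (which holds precisely for $\gamma\in\{v,-v^{-1}\}$, using $\chi(S)=v-v^{-1}$, $\chi(R)=1$), and \eqref{def:M2} need only be checked on the generators $t_1,\dots,t_d$ of $B^{\otimes d}$, where it follows from $F_i(u_a)F_j(u_a)=0$ for $i\neq j$. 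Theorem~\ref{thm:MwrN} then equips $M\wr N$ with a $B\wr\cH(d)$-module structure for every $\tcH^\chi_d$-module $N$.

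The second step of part~(a) is to read off the action from \eqref{def:QWPAction} and \eqref{def:tauaction}. Since $\tau^M(1\otimes m)=m\otimes 1$ we get $b\cdot(m\otimes y)=(b\cdot m)\otimes y$, and since $\tau^M(s_i\otimes m)=\sigma^M_i(m)\otimes h_i+\rho^M_i(m)\otimes 1=\gamma m\otimes 1$ we get $H_i\cdot(m\otimes y)=\gamma(m\otimes y)$; in particular the $\tcH^\chi_d$-structure of $N$ enters only through $\dim N$.

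For part~(b), the structural input is Definition~\ref{def:Hpar}: the parabolic subalgebra $B\wr\cH(\mu)$ is canonically isomorphic to $(B\wr\cH(\mu_1))\otimes\cdots\otimes(B\wr\cH(\mu_r))$ for quantum wreath products sharing the modified Ariki--Koike parameter $Q$. The \based-module verification of the previous paragraph does not depend on the rank --- it involves only the scalar operators $\sigma^{M_i}=0$, $\rho^{M_i}=\gamma$ at adjacent positions together with the twist $\chi$, whose values $\chi(S),\chi(R)$ are the same for every simple $M_i$ because $S,R\in K(1\otimes 1)$. Hence part~(a), applied with $d$ replaced by each $\mu_i$, shows $M_i\wr N_i$ is a $B\wr\cH(\mu_i)$-module for any $\tcH^\chi_{\mu_i}$-module $N_i$; the outer tensor product $\bigotimes_i(M_i\wr N_i)$ is then a module over $\bigotimes_i(B\wr\cH(\mu_i))\cong B\wr\cH(\mu)$, and permuting tensor factors identifies its underlying $K$-module with $M_1^{\otimes\mu_1}\otimes\cdots\otimes M_r^{\otimes\mu_r}\otimes N_1\otimes\cdots\otimes N_r$ compatibly with the resulting block action.

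I do not anticipate a genuinely hard step: the substantive computation (verifying \eqref{def:M2}, \eqref{def:M4}, \eqref{def:taubr1}--\eqref{def:taubr3}) was already done in Section~\ref{sec:mAK}, and the rest is bookkeeping. The two points deserving a line of care are that the \based conditions for $B\wr\cH(\mu_i)$ coincide with those for $B\wr\cH(d)$ --- automatic since all the relevant operators are scalars --- and that the canonical isomorphism of Definition~\ref{def:Hpar} carries the tensor-product module structure to the stated action, which is a routine unwinding.
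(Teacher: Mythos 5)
Your approach matches the paper's: verify that $M=K_a$ is a \based $B$-module via the scalar analysis of Section~\ref{sec:mAK}, invoke Theorem~\ref{thm:MwrN}, and for part~(b) use the canonical decomposition of the parabolic subalgebra from Definition~\ref{def:Hpar}. Under the reading $(\sigma^M,\rho^M)=(0,\gamma)$ your computations are correct.

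However, your own (correct) observation that ``the $\tcH^\chi_d$-structure of $N$ enters only through $\dim N$'' should have been a red flag rather than an aside. With $\sigma^M=0$ the recursion \eqref{def:tauaction} yields $\tau^M(w\otimes m)=\gamma^{\ell(w)}\,m\otimes 1$, so $M\wr N$ is a direct sum of $\dim N$ copies of $M\wr K$ as a $B\wr\cH(d)$-module. That makes the parabolic inductions $L(\bblambda)$ of Section~\ref{sec:WMAK} depend on the partitions $\nu^{(i)}=\bblambda(\lambda^{(i)})$ only through their dimensions, which is incompatible with the claim that $\{L(\bblambda)\}_{\bblambda\in\Omega}$ is a complete set of pairwise non-isomorphic simples; already for $m=1$, where $B\wr\cH(d)\cong\cH_q(\Sigma_d)$, every $H_i$ would act by the scalar $\gamma$ and one would recover only the trivial and sign representations. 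The point you missed is that, because $M$ is one-dimensional, $b$ and $\sigma_i(b)$ act by the \emph{same} scalar on $M^{\otimes d}$, so $(\sigma^M,\rho^M)=(\mathrm{id},0)$ also satisfies \eqref{def:M2}, \eqref{def:M4}, \eqref{def:taubr1}--\eqref{def:taubr3} (the nontrivial part being $\rho_i(b)\cdot m=0$, which you already checked). That choice gives $H_i\cdot(m\otimes y)=m\otimes(h_i\cdot y)$, in line with Corollaries~\ref{cor:MwrNAHA} and~\ref{cor:MwrNHu} and with what the downstream parametrization of simples actually requires. So the corollary's displayed formula ``$\gamma(m\otimes\cdot y)$'' and the conclusion ``$s=0$, $r=\gamma$'' at the end of Section~\ref{sec:mAK} appear to be misstatements that you reproduced faithfully rather than questioned; a proof consistent with the rest of the paper should use $\sigma^M=\mathrm{id}$, $\rho^M=0$.
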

Following Example \ref{ex:AKindex}, consider the set of simple modules  $\{K_\lambda ~|~ \lambda \in I_B \}$ over the finite dimensional algebra $B = \frac{K[t]}{\prod_{i=1}^m(t-u_i)}$, where $I_B = \{1, 2, \dots, m\}$.

For any $\bblambda \in \Omega$, its support is of the form 
$\{\lambda^{(1)}, \dots, \lambda^{(r)}\}$ for some distinct $\lambda^{(i)} \in I_B$.
Write $\mu_i := {}^\#\lambda^{(i)}$ and $\nu^{(i)} := \bblambda(\lambda^{(i)})$.
For each $i$, we set $M_i := K_{\lambda^{(i)}}$ to be the corresponding simple module.

Next, we can set  $N_i = S^{\nu^{(i)}}_{\chi}$ to be the Specht module
over the Hecke algebra $\tcH_\chi^d$. Consequently, Corollary \ref{cor:MwrNAK} implies that the following is a module over the Ariki--Koike algebra:
\begin{eq}
L(\bblambda) := \ind_{\mu}^d  \big( 
(K_{\lambda^{(1)}} \wr S_{\chi}^{\nu^{(1)}}) 
\otimes \dots \otimes
(K_{\lambda^{(r)}} \wr S_{\chi}^{\nu^{(r)}}) 
\big).
\end{eq}
It is known (see \cite{SS05}) that the following set gives the complete set of non-isomorphic simple modules over the modified Ariki--Koike algebras:
\[
\{L(\bblambda) ~|~ \bblambda \in \Omega\}
=
\left\{
\ind_{\mu}^{d} \big( \bigotimes\nolimits_i (K_{\lambda^{(i)}} \wr S_{\chi}^{\nu^{(i)}}) \big)
~\middle|~
\mu \vDash d, 
\  \nu^{(i)} \vdash \mu_i,
\ 1\leq \lambda^{(i)} \leq m
\right\}.
\]

\subsection{Wreath Modules over Polynomial Quantum Wreath Products}\label{sec:WMPQWP}
Following \cite{LM25}, let $B\wr \cH(d)$ be a polynomial quantum wreath product.
That is, $B = F[x^{\pm 1}]$ or $F[x]$ is the ring of (Laurent) polynomials over a $K$-algebra $F$. 

There is an element $\beta = \sum_{0\leq i,j \leq 1} \Delta^{ij} (x^i\otimes x^j) \in B\otimes B$ for some weak Frobenius elements $\Delta^{ij}$, i.e., $\Delta^{ij} (f\otimes g) = (g\otimes f)\Delta^{ij}$ for all $f,g \in F$.
The parameters are given by
\eq
S = \Delta^{10} - \Delta^{01}, 
\quad
R \in (Z(F\otimes F))^{\Sigma_2},
\quad
\sigma:a\otimes b \mapsto  b\otimes a,
\quad
\rho = \partial^\beta,
\endeq
where $\partial^\beta$ is the twisted Demazure operator given by $\rho(a\otimes b) = \frac{\beta (a\otimes b) - (b\otimes a) \beta}{x\otimes 1 -1\otimes x}$ for all $a,b \in B$.
We assume further that $B\wr\cH(d)$ has a Kashiwara-Miwa-Stern (KMS) tensor space, which is a consequence by making the following assumptions:
\begin{itemize}
\item[(C1)] There is an element $\alpha \in F\otimes F$ such that $(H+\alpha)(H-\alphab) = 0$, where $\alphab := \sigma(\alpha) + S$;
\item[(C2)] Elements in $F\otimes F$ commute with both $\alpha$ and $\beta$;
\item[(C3)] The element $\alpha(x\otimes 1 - 1\otimes x) +\beta$ is not a zero divisor.
\end{itemize}
By a slight abuse of notation, we consider twisted Hecke algebras over $F^{\otimes d}$ instead of over $K$ within this subsection.
We obtain the following wreath modules as a consequence of Theorem~\ref{thm:MwrN}.

\begin{cor}\label{cor:MwrNAHA}
Suppose that $M, M_1, \dots, M_r \in F[x^{\pm1}]$-mod are cyclic, i.e., of the form $F[x^{\pm1}] \cdot v$ for some $v$.
\begin{enua}
\item 
If $N$ is a module over a twisted Hecke algebra $\tcH^{\chi}_d$ satisfying \eqref{asm:twistchi}.
Then, $M\wr N \in B \wr\cH(d)$-mod, on which the action is given by, for $b\in F[x^{\pm1}]^{\otimes d}$, $y\in N$:
\[
b \cdot (v \otimes y) = (b \cdot v) \otimes y,
\quad
H_i \cdot ((b \cdot v) \otimes y) =  (\sigma_i(b) \cdot v) \otimes (h_1 \cdot y)
+(\rho_i(b) \cdot v) \otimes y.
\]
\item
For $1\leq i\leq r$, if each $N_i$ is a module over $\tcH^{\chi_{i}}_{\mu_i}$, which is a twisted Hecke algebra satisfying \eqref{asm:twistchi} with respect to $M=M_i$.
Then, 
\[
(M_1 \wr N_1) \otimes (M_2 \wr N_2) \otimes  \dots \otimes (M_r \wr N_r) 
\equiv
M_1^{\otimes \mu_1} \otimes \dots \otimes M_r^{\otimes \mu_n} \otimes N_1 \otimes \dots \otimes N_r
\] 
is a module over the parabolic subalgebra $B \wr \cH(\mu)$ of $B \wr \cH(d)$.
\end{enua}
\end{cor}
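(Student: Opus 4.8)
The plan is to deduce both parts from Theorem~\ref{thm:MwrN} by exhibiting each cyclic module $M=F[x^{\pm1}]\cdot v$ as a \based $B$-module. The structural point is that the modules relevant here---cyclic in the sense of Example~\ref{ex:PQWPindex}, i.e.\ free of rank one over $F[x^{\pm1}]$, as are the submodules $V_j\subseteq T$ of a KMS tensor space---give a canonical identification of $M^{\otimes d}$ with the regular $\cB$-module $\cB=B^{\otimes d}$ via $v^{\otimes d}\mapsto 1^{\otimes d}$, under which the structure map $\alpha_M\colon\cB\otimes M^{\otimes d}\to M^{\otimes d}$ is just the multiplication of $\cB$. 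I would then set $\sigma^M_i:=\sigma_i$ and $\rho^M_i:=\rho_i$; the structure map $\tau^M$ produced by~\eqref{def:tauaction} is then the one already recorded in~\eqref{eq:AHAtauM2}, and the displayed action in part~(a) is exactly what~\eqref{def:QWPAction} produces once the $w=s_i$ clause of~\eqref{def:tauaction} is inserted (only that clause is needed, since $H_i$ are the generators).

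Next I would check the criterion of Theorem~\ref{thm:basedmod}, i.e.\ that $M$ with these data is a \based module. Under $M^{\otimes d}\cong\cB$ the operators $l^M_{S_i},l^M_{R_i}$ become left multiplication and the twist contribution becomes right multiplication by $S_i,R_i$, so conditions (M2) and (M4)---and (M5)--(M7) when $d\ge3$---reduce to the PBW conditions (P2), (P4), (P5)--(P7), which hold by the standing assumption~\eqref{asm:basis}; independence of $\tau^M(r)$ from $r$ then follows from (M5)--(M7) exactly as in Step~1 of the proof of Theorem~\ref{thm:basedmod}. This is the same bookkeeping already carried out for $M=V_j$ in Section~\ref{sec:AHA}. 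The genuinely new wrinkle---and the step I expect to demand the most care---is that $S^M,R^M$ need not be scalars in $K$ here, so one must work with the $F^{\otimes d}$-linear twisted Hecke algebra flagged in Section~\ref{sec:WMPQWP} and verify that Theorems~\ref{thm:basedmod} and~\ref{thm:MwrN}, and the supporting results of Sections~\ref{sec:tHA}--\ref{sec:WM}, survive replacing $K$ by $F^{\otimes d}$; this uses the commutativity hypothesis (C2) and the weak-Frobenius identity for the $\Delta^{ij}$.

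With $M$ a \based module, Theorem~\ref{thm:MwrN} gives immediately that $M\wr N=M^{\otimes d}\otimes N$ is a $B\wr\cH(d)$-module for every module $N$ over the relevant twisted Hecke algebra, which proves part~(a). For part~(b), Definition~\ref{def:Hpar} identifies the parabolic subalgebra $B\wr\cH(\mu)$ with $(B\wr\cH(\mu_1))\otimes\cdots\otimes(B\wr\cH(\mu_r))$; by part~(a) each $M_i\wr N_i$ is a $B\wr\cH(\mu_i)$-module, so their outer tensor product is a module over the tensor-product algebra, i.e.\ over $B\wr\cH(\mu)$, and the stated identification with $M_1^{\otimes\mu_1}\otimes\cdots\otimes M_r^{\otimes\mu_r}\otimes N_1\otimes\cdots\otimes N_r$ as $K$-modules is a reordering of tensor factors. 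In summary, the real obstacle is not any individual step but the upgrade of the wreath-module formalism from scalar twists to $F^{\otimes d}$-linear twists, the cyclic hypothesis on $M$ (read as freeness of rank one) being exactly what makes $M^{\otimes d}$ a free $\cB$-module and hence $\sigma^M_i,\rho^M_i$ well defined.
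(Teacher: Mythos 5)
Your proposal takes the same approach the paper tacitly invokes: the corollary is issued without a written proof, flagged only as ``a consequence of Theorem~\ref{thm:MwrN},'' and the intended argument is exactly the one you spell out---exhibit $M$ as a \based module, reduce the \based conditions to the PBW conditions, and cite Theorem~\ref{thm:MwrN} for part (a) and Definition~\ref{def:Hpar} for part (b). The reduction of \eqref{def:M2}, \eqref{def:M4}, \eqref{def:taubr1}--\eqref{def:taubr3} to \eqref{def:P2}, \eqref{def:P4}, \eqref{def:P5}--\eqref{def:P7} is the same reduction the paper already records for $M=V_j$ in Section~\ref{sec:AHA}; your proof merely notes that the cyclic hypothesis (read as free of rank one, which is in fact what makes $\sigma^M_i,\rho^M_i$ well defined, since $\rho_i$ need not preserve a nontrivial annihilator ideal) is precisely what makes $M^{\otimes d}\cong\cB$ and lets the identification of $l^M_{S_i},\,\chi(S)$ with $l_{S_i},\,r_{S_i}$ go through. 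The one place you flag as genuinely delicate---that Section~\ref{sec:WMPQWP} silently works over $F^{\otimes d}$ rather than over $K$, so \eqref{asm:twistchi} has to be read as its $F^{\otimes d}$-linear avatar and the ambient results re-justified under (C1)--(C3)---is a real subtlety that the paper glosses over, and it is to your credit that you isolate it rather than pretend the scalar case applies verbatim. No gaps beyond that; this is the intended argument, filled in.
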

Consider the KMS tensor space $T_n^{\otimes d}$, where
$T_n := \bigoplus_{i \in \ZZ} F v_i$, on which $x^{\pm1}$ acts by $x^{\pm1} \cdot v_{i} = v_{i\pm n}$.
Following Example~\ref{ex:PQWPindex}, consider the set $\{V_\lambda ~|~ \lambda \in I_B\}$ of simple submodules of $T_n$, where $I_B = \{1, \dots, n\}$.

Note that each $\bblambda \in \Omega$ has support of the form 
$\{1\leq k_1 < \dots < k_r \leq n\}$ for some $r$.
Here, we consider the subset $\Omega'$ of $\Omega$ consisting of those $\bblambda$ such that $\bblambda(\lambda)$ is a one-row partition for all $\lambda \in I_B$.
Hence, we can write $\bblambda(k_i) = (\mu_i)$ for some integer $\mu_i$.
For each $i$, set $M_i = V_{k_i}$.
Note that any $m \in V_{k_i}$ is of the form $m = P \cdot (v_{k_i} \otimes v_{k_i})$ for some (Laurent) polynomial $P(x_1, x_2)$, where $x_1 := x \otimes 1, x_2 := 1\otimes x$.
Let $\chi_i := \chi_{V_{k_i}}$ be the central character given by $\chi_i( P \cdot (v_{k_i} \otimes v_{k_i})) = P(1,1)$, i.e., the evaluation of $P$ at $x=1$.

In other words, each twisted Hecke algebra $\tcH^{\chi_i}_{\mu_i}$ has quadratic relations
\eq
0 = (h_j + \alpha_j) (h_j - \alphab_j),
\quad
\textup{for all}
\quad
1\leq j \leq \mu_i-1.
\endeq
We want $N_i = F^{\otimes \mu_i}$ be the ``trivial module'' on which $h_i$ acts by multiplication by $\alphab_i$.
By convention, it means $N_i = S_{\chi_i}^{(\mu_i)}$.
Therefore, the KMS tensor space can be decomposed as $T_n^{\otimes d} = \bigoplus_{\bblambda \in \Omega'} T_\bblambda$ where 
\eq
T_\bblambda :=\ind_\mu^d
\big(
(V_{k_1} \wr S_{\chi_1}^{(\mu_1)})
\otimes \dots \otimes 
(V_{k_r} \wr S_{\chi_r}^{(\mu_r)})
\big).
\endeq
Note that $T_\bblambda \cong  \ind_{\mu}^d
\big(
(V_1 \wr S_{\chi_1}^{(\mu_1)})
\otimes \dots \otimes 
(V_r \wr S_{\chi_r}^{(\mu_r)})
\big)$ as $B \wr \cH(d)$-modules, and hence each summand of such form occurs ${\mu \choose \mu_1; \mu_2; \dots; \mu_r}$ times in $T_{n}^{\otimes d}$.

\subsection{Wreath Modules over Hu Algebras}\label{sec:WMHu}
Following Section \ref{sec:gHu}, let $B =\cH_q(\Sigma_m)$,
and let $B \wr \cH(d)$ be the generalized Hu algebra.
We obtain the following wreath modules as a consequence of Theorem~\ref{thm:MwrN}.

\begin{cor}\label{cor:MwrNHu}
Suppose that $M, M_1, \dots, M_r \in \cH_q(\Sigma_m)$-mod such that the element $z_m \in \cH_q(\Sigma_m) \otimes \cH_q(\Sigma_m)$ acts on $M\otimes M, M_i\otimes M_i$ by  multiplication by $\chi(Z_m)$ and $\chi_i(z_m)$, respectively.
\begin{enua}
\item
If $N$ is a module over $\tcH^{\chi}_d$, which is a twisted Hecke algebra satisfying \eqref{asm:twistchi}
Then, $M\wr N \in B \wr\cH(d)$-mod, on which the action is given by, for $b\in \cH_q(\Sigma_m)^{\otimes 2}, m \in M^{\otimes 2}$, $y\in N$:
\[
b \cdot (m \otimes y) = (b \cdot m) \otimes y,
\quad
H_1 \cdot (m \otimes y) =  m \otimes (h_1 \cdot y).
\]
\item
For $i = 1, \dots, r$, if each $N_i$ is a module over  $\tcH^{\chi_{i}}_{\mu_i}$, which is a twisted Hecke algebra satisfying \eqref{asm:twistchi} with respect to $M = M_i$. 
Then, 
\[
(M_1 \wr N_1) \otimes (M_2 \wr N_2) \otimes  \dots \otimes (M_r \wr N_r) 
\equiv
M_1^{\otimes \mu_1} \otimes \dots \otimes M_r^{\otimes \mu_n} \otimes N_1 \otimes \dots \otimes N_r
\] 
is a module over the parabolic subalgebra $\cH_q(\Sigma_m) \wr \cH(\mu)$ of the generalized Hu algebra.
\end{enua}
\end{cor}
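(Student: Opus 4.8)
The plan is to derive the corollary as a specialization of Theorem~\ref{thm:MwrN}; the only real work is to certify that an arbitrary $\cH_q(\Sigma_m)$-module $M$ is a \based module for the generalized Hu algebra. Recall that this algebra has parameters $S = 0$, $R = Z_m$, $\sigma = \textup{flip}$, $\rho = 0$, so I would equip $M$ with the \based structure $(M, \sigma^M, \rho^M, S^M, R^M)$ in which $\sigma^M_i$ is the transposition of the $i$-th and $(i+1)$-th tensor factors of $M^{\otimes d}$, $\rho^M = 0$, $S^M = 0$, and $R^M$ is multiplication by the scalar $\chi(R) \in K$ by which $Z_m$ acts on $M \otimes M$; such a scalar exists precisely because the hypothesis posits a twisted Hecke algebra $\tcH^\chi_d$ satisfying \eqref{asm:twistchi}, i.e.\ one for which $R^M$ is scalar multiplication.

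With this choice I would invoke Theorem~\ref{thm:basedmod}. Since $\rho = 0 = \rho^M$ and $\chi(S) = 0 = S$, the conditions \eqref{def:M2}, \eqref{def:M4} and (when $d \geq 3$) \eqref{def:taubr1}--\eqref{def:taubr3} nearly all reduce to $0 = 0$: the first equation of \eqref{def:M2} is the evident compatibility of the flip with the diagonal $B^{\otimes d}$-action; the first equation of \eqref{def:taubr1} is the symmetric-group braid relation $s_i s_j s_i = s_j s_i s_j$ applied to flips; and the sole residual identity, the second equation of \eqref{def:M4}, becomes $\chi(R)\cdot\mathrm{id} = l^M_{R_i}$, i.e.\ that $Z_m$ acts on the $(i,i+1)$ tensor slot of $M^{\otimes d}$ by $\chi(R)$, which holds by the choice of $\chi(R)$. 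This is verbatim the computation performed for $M = S^\lambda_m$ in Section~\ref{sec:gHu}, now run for a general $M$. Hence $M$ is a \based module, and Theorem~\ref{thm:MwrN} provides a $B \wr \cH(d)$-action on $M \wr N$ for every $\tcH^\chi_d$-module $N$.

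It remains to read off the formulas. Since $\sigma^M$ is the flip and $\rho^M = 0$, the recursion \eqref{def:tauaction} collapses to $\tau^M(w \otimes m) = m \otimes h_w$ (cf.\ \eqref{eq:HutauM2}), and substituting this into the composition \eqref{def:QWPAction} gives $b \cdot (m \otimes y) = (b \cdot m) \otimes y$ and $H_i \cdot (m \otimes y) = \sigma^M_i(m) \otimes (h_i \cdot y)$, which specializes to part~(a). For part~(b), applying part~(a) with $d$ replaced by $\mu_i$ (using $\tcH^\chi_1 = K$ and $B \wr \cH(1) = B$ when $\mu_i = 1$) makes each $M_i \wr N_i$ a module over $B \wr \cH(\mu_i)$; by Definition~\ref{def:Hpar} the parabolic subalgebra $B \wr \cH(\mu)$ is canonically isomorphic to $(B \wr \cH(\mu_1)) \otimes \dots \otimes (B \wr \cH(\mu_r))$, so the outer tensor product of the $M_i \wr N_i$ is naturally a $B \wr \cH(\mu)$-module, and transporting along this isomorphism yields the stated structure. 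The only subtlety I anticipate is bookkeeping: recognizing that the hypothesis ``$\tcH^\chi_d$ satisfies \eqref{asm:twistchi}'' is precisely what forces $Z_m$ to act by a scalar on $M \otimes M$, which is the single non-vacuous instance among the conditions of Theorem~\ref{thm:basedmod} for the flip structure map.
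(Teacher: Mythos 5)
Your proposal is correct and follows essentially the same route as the paper: the paper treats this corollary as an immediate consequence of Theorem~\ref{thm:MwrN}, with the \based-module verification done in Section~\ref{sec:gHu}, and your argument simply runs that verification for general $M$ (observing that the only non-vacuous condition reduces to $Z_m$ acting by the scalar $\chi(R)$, which is exactly what the hypothesis \eqref{asm:twistchi} encodes). One small slip worth noting: you write $\tau^M(w \otimes m) = m \otimes h_w$ but the recursion \eqref{def:tauaction} with $\sigma^M = \textup{flip}$, $\rho^M = 0$ actually yields $\tau^M(w \otimes m) = \sigma^M_w(m) \otimes h_w$, so the $H_i$-action does permute the $M$-tensor factors, i.e.\ $H_i\cdot(m\otimes y) = \sigma^M_i(m)\otimes(h_i\cdot y)$ as you correctly write in the next line; this matches \eqref{eq:HuAction}, and the corollary's displayed formula (and \eqref{eq:HutauM2}) appear to suppress the flip in their notation.
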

Following Example~\ref{ex:Huindex}, consider the set of Specht modules $\{S_m^\lambda ~|~ \lambda \in I_B\}$ over the Hecke algebra $B = \cH_q(\Sigma_m)$, where $I_B = \Pi_m$.

We need to be careful when $I_B$ is not a totally ordered set.
We may fix a enumeration $I_B = \{\lambda^{(1)}, \lambda^{(2)}, \lambda^{(3)}, \dots\}$ that is compatible with the dominance order on $I_B = \Pi_m$, i.e., if $\lambda^{(i)} \lhd \lambda^{(j)}$ then $i<j$.
Thus,  each $\bblambda \in \Omega$ has support of the form 
$\{\lambda^{(k_i)} ~|~  k_1 < k_2 < \dots < k_r \}$ for some $r$.
Write 
$|\bblambda| := ({}^\#\lambda^{(k_1)}, \dots , {}^\#\lambda^{(k_r)}) \vDash d$.
For example, if $\bblambda = e_{\lambda^{(1)}}^{(2)} + e_{\lambda^{(3)}}^{(1)} +e_{\lambda^{(4)}}^{(1,1)}$, then $|\bblambda| = (2,1,2)$.

For each $i$, we set $M_i := S^{\lambda^{(k_i)}}_m$ to be the corresponding Specht module for $\cH_q(\Sigma_m)$,
and let $\chi_i$ be the twist determined by $\chi_i(S) = 0$ and $\chi_i(R) = f_{\lambda^{(k_i)}}$.

Next, let
$\nu^{(i)} := \bblambda(\lambda^{(k_i)})$. 
We pick  $N_i = S^{\nu^{(i)}}_{\chi_i}$ to be the Specht module as in Section \ref{sec:tHASpecht}.
Consequently, Corollary \ref{cor:MwrNHu} implies that the following vector space which we call the Specht module is indeed a module over the generalized Hu algebra:
\eq\label{def:SpechtgHu}
S^\bblambda := \ind_{|\bblambda|}^d  \big( 
(S_{m}^{\lambda^{(k_1)}} \wr S_{\chi_1}^{\nu^{(1)}}) 
\otimes
\dots
\otimes
(S_{m}^{\lambda^{(k_r)}} \wr S_{\chi_r}^{\nu^{(r)}})
\big).
\endeq
Note that $S^\bblambda$ does not depend on the choice of enumeration of $I_B$.

In other words, when $d=2$, the set $\{S^\bblambda ~|~ \bblambda \in \Omega\}$ recovers the set of Specht modules over the Hu algebra as in Corollary \ref{cor:gHuIrrep}.

\subsection{Wreath Modules over Yokonuma--Hecke Algebras}
{
Let $m := q-1$ where $q$ is a prime power, 
fix a primitive $m$th root of unity $\xi \in \CC$,
and let $B = \CC[t]/(t^m-1)$.
Consider the Yokonuma--Hecke algebra for $\Sigma_d$ as a quantum wreath product $B \wr \cH(d)$ with the following choices of parameters:
\begin{equation}
    S = (q-1)e, 
    \quad
    R = q(1\otimes1),
    \quad
    \sigma = \textup{flip},
    \quad
    \rho = 0,
\end{equation}
where $e:= \frac{1}{m}\sum_{i=1}^m t^i\otimes t^{-i}$ is an idempotent in $B\otimes B$.
    
The simple $B$-modules are $\{\CC_1, \dots, \CC_m\}$, where $\CC_i$ is the 1-dimensional $B$-module on which $t$ acts by multiplication by $\xi^i \in \CC^\times$.
Set $M= \CC_a$ for some $1\leq a \leq m$.
For any $1\leq b \leq m$, the idempotent $e$ acts on $\CC_a \otimes \CC_b$ by
\begin{equation}
    \frac{1}{m} \sum_{i=1}^m \xi^{i(a-b)} = \begin{cases}
        1&\tif a=b;
        \\
        0&\tif a\neq b.
    \end{cases}
\end{equation}
Consider $M\otimes M = \CC_a \otimes \CC_a$, on which $S$ and $R$ act by multiplication by $q$ and $q-1$, respectively.
Thus, the twisted Hecke algebra $\tcH^{\chi_M}_d$ is the Hecke algebra $\cH_q(\Sigma_d)$.
Recall that $S_d^{\nu} \in \cH_q(\Sigma_d)$-mod is the Specht module corresponding to the partition $\nu \vdash d$.
\begin{prop}
The wreath module
    $\CC_a \wr S_d^{\nu}$ is a module over the Yokonuma--Hecke algebra $B \wr \cH(d)$.
\end{prop}
\begin{proof}
Define $\sigma^M = \id$ and $\rho^M = 0$ to be the desired $\CC$-linear endomorphisms on $M\otimes M$ in the structure map $\tau^M$.
Here, \eqref{def:M2}--\eqref{def:taubr3} hold almost automatically.
Therefore, $M = \CC_a$ is a \based module, according to Theorem \ref{thm:basedmod}.

Next, since the twisted Hecke algebra $\tcH^{\chi_M}_d$ is the Hecke algebra $\cH_q(\Sigma_d)$, 
 $\CC_a \wr S_d^{\nu}$ is a $B\wr \cH(k)$-module by Theorem~\ref{thm:MwrN}.
\end{proof}

\begin{cor}
    The complete set of simple modules of the Yokonuma--Hecke algebra is given by
\[
\left\{
L(\bblambda) :=
\ind_{B \wr\cH(\mu)}^{B \wr \cH(d)} \big( \bigotimes\nolimits_i (\CC_{a_i} \wr S_{\mu_i}^{\nu^{(i)}}) \big)
\right\}_{\bblambda\in I_A},
\quad
I_A := \left\{
\bblambda: \{1, \dots, m\} \to  \Pi \middle|\sum\nolimits_{1\leq j \leq m} {}^\#\bblambda(j) = d
\right\},
\]
where the support of $\bblambda$ is $\{a_1, \dots, a_r\} \subseteq \{1, \dots, m\}$ for some $r$,
$\nu^{(i)} := \bblambda(a_i) \vdash \mu_i \leq d$ for each $1\leq i \leq r$,
and $\mu = (\mu_1, \dots, \mu_r) \vDash d$ is a composition.
\end{cor}
\begin{proof}
This is done by paraphrasing the classification result \cite[Theorem 3.7]{CP14} due to Chlouveraki and Poulain d'Andecy. 

Our $\bblambda$ is identified with the multipartition referred as $\boldsymbol{\lambda} = (\lambda^{(0)}, \dots, \lambda^{(m-1)})$ of $d$ therein by
\[
\lambda^{(j)} = \begin{cases}
\bblambda(a_i) &\tif j = a_i;
\\
\varnothing &\textup{otherwise}.
\end{cases}
\]
By comparing the action on Young's seminormal form, one obtains immediately that our $L(\bblambda)$ is isomorphic to the simple module $V_{\boldsymbol{\lambda}}$ therein.
\end{proof}
}

\subsection{Cuspidal Modules are Wreath Modules} \label{sec:cuspidal}
{
Observe that construction of the wreath modules admits the following generalization.
Let $M_1, \dots, M_d$ be $B$-modules.
By a slight abuse of notation, within this subsection set
$\cM = M_1 \boxtimes \dots \boxtimes M_d$,
$\alpha_M$ be the structure map $B^{\otimes d} \otimes\cM \to \cM$,
$\sigma^{M}_i, \rho^{M}_i$ be arbitrary $K$-linear maps in $\End_K(\cM)$ which are necessarily determined by the rank two maps $\sigma^M$ and $\rho^M$,
$\tau^M$ be the map defined by the same formula \eqref{def:tauaction}, and let
\begin{equation}\label{def:newWM}
    (bH_w) \cdot (m\otimes n) := (\alpha_M \otimes \alpha_N)(b\otimes \tau^M(w\otimes m) \otimes n).
\end{equation}
Then, the proof of Theorem \ref{thmA} generalizes verbatim to the following.
\begin{prop}\label{prop:thmA'}
Suppose that each $S_i$ (resp. $R_i$) acts on $\cM$ by the same $\chi(S)$ (resp. $\chi(R)$) in $K$, and $N \in \tcH^\chi_d$-mod.
Then, \eqref{def:newWM} defines a $B\wr \cH(d)$-module structure on $\cM \otimes N$ if and only if the following are true:
\begin{itemize}
    \item \eqref{def:M2} and \eqref{def:M4} hold.
    \item \eqref{def:taubr1}--\eqref{def:taubr3} hold additionally if $d\geq 3$.    
\end{itemize}
\end{prop}
We write $\cM \otimes N = M_1 \tilde{\wr} N$ if all $M_i$ have the same underlying space. 
Now, let $B \wr \cH(d)$ be the degenerate affine Hecke algebra of type A. To be precise, $B = \ZZ[x]$, $S = 0$, $R=1$, $\sigma = $flip, and $\rho$ is determined by $\rho(x_1) = -1$.

The simple $B$-modules are given by $\{\CC_a ~|~a \in \CC\}$, where $\CC_a$ is 1-dimensional and on which $x$ acts by multiplication by $a$.
It is well-known that the simple $B \wr \cH(d)$-modules can be constructed via parabolic inductions on the cuspidal modules of the form $\CC_a\boxtimes \dots \boxtimes\CC_{a+k-1}$ for some $a\in \CC, 1\leq k \leq d$. That is, the $B \wr \cH(k)$-module structure is given by
\begin{equation}   \label{action:segmentdAHA}
(H_w x_j) \cdot 1^{\otimes k} := (a+j-1)1^{\otimes k}
\quad
\textup{for all }
w\in\Sigma_k,
\ 1\leq j \leq k.
\end{equation}
\begin{prop}
As modules of the degenerate affine Hecke algebra, 
the cuspidal module $\CC_a\boxtimes \dots \boxtimes\CC_{a+k-1}$ is isomorphic to $\CC_a \tilde{\wr} S^{(k)}$ where $S^\nu$ is the Specht module of $\CC\Sigma_k$ corresponding to the partition $\nu$.
\end{prop}
\begin{proof}
First, we apply Proposition~\ref{prop:thmA'} to show that $\CC_a \wr S^{(k)}$ admits a $B\wr\cH(k)$-module structure.
    Let $M = \CC_a$. Consider the following $B^{\otimes k}$-module $M^{\otimes k}$ with a twist on each tensor factor:
    \[
    x_1\cdot m = a m, 
    \quad 
    x_2 \cdot m = (a+1) m,
    \quad
    \dots
    \quad
    x_k\cdot m = (a+k-1) m.
    \]
Define $\sigma^M_i = 0$ and $\rho^M_i = \id$ for all $i$, and hence $H_i$ indeed acts trivially on the proposed wreath module $\CC_a \wr S^{(k)}$, provided \eqref{def:M2}--\eqref{def:taubr3} hold.

Note that the second equality in \eqref{def:M2} reduces to the rank two verification:
\begin{equation}\label{eq:dAHAM2}
b\cdot m = \sigma(b)\cdot m + \rho(b)\cdot m,
\quad
\textup{for all }
b \in \ZZ[x_1, x_2],
\ m \in M\otimes M.
\end{equation}
Since $\rho(x^i_1x^j_2) = (x_1x_2)^l\rho(x^{i-l}_1x^{j-l}_2)$ where $l := \min\{i,j\}$, one may factor out $(x_1x_2)^l$ from both sides of \eqref{eq:dAHAM2}.
That is, it suffices to prove the special case when $b = x^i_1$ or $x^i_2$, which follows from a direct calculation.

The rest of the equalities in \eqref{def:M2}--\eqref{def:taubr3} follow trivially.
Next, since $S = 0$, $R=1$, the twisted Hecke algebra $\tcH_k^{\chi_M}$ is precisely to the group algebra $\CC \Sigma_k$,
and hence $\CC_a \tilde{\wr} S^{(k)}$ is a $B\wr \cH(k)$-module, on which the action agrees with the one given in \eqref{action:segmentdAHA}.
Therefore, $\CC_a \tilde{\wr} S^{(k)} \cong \CC_a\boxtimes \dots \boxtimes \CC_{a+k-1}$.
\end{proof}
A similar argument proves that the cuspidal module $\CC_a \boxtimes \dots \boxtimes \CC_{aq^{k-1}}$ for the affine Hecke algebra is isomorphic to $\CC_a \tilde{\wr} S^{(k)}_k$, where $S^{\nu}_k$ is the Specht module for the Hecke algebra $\cH_q(\Sigma_k)$.
}

\section{Technical proofs} \label{sec:app}

\subsection{Supplements of Proof of  Theorem \ref{thm:basedmod}}\label{sec:abbrev}

For simplicity, in several proofs we will abbreviate tensor products by juxtapositions, e.g., $\id \otimes \tau^M$ can be expressed as $\cB W \cM  \overset{\id\tau^M}{\longrightarrow} \cB \cM \tcH$.
Additionally, we use a combination of a unique outward arrow as well as underbraces to abbreviate a map, in the sense that tensor factors that are not underbraced are sent via the identity map, e.g.,
\eq
\xymatrixrowsep{1.4pc}
\xymatrix{
\ud{\tau^B}{W \cB} \cM 
\ar[d]
\\
W \cB \cM 
}
\quad
\textup{ is a shorthand for the map}
\quad
\tau^B \id_\cM  : W \cB  \cM  \to  \cB W \cM.
\endeq

\subsubsection{Necessary Conditions (i)}\label{app:neci}
Notice that all $m \in M^{\otimes d}$,
\eq
\tau^M(s_ks_l\otimes m) 
= \sigma^M_k\sigma^M_l(m) \otimes h_k h_l
+  \rho^M_k\sigma^M_l(m) \otimes h_l 
+ \sigma^M_k\rho^M_l(m) )\otimes h_k
+  \rho^M_k\rho^M_l(m) \otimes 1,
\endeq
for $|k-l| >1$.
Also, for $|i-j| =1$, 
\eq
\begin{split}
\tau^M(s_is_js_i\otimes m) 
&= \sigma^M_i\sigma^M_j\sigma^M_i(m)\otimes h_i h_j h_i 
+  \rho^M_i\sigma^M_j\sigma^M_i(m)\otimes h_j h_i
+ \sigma^M_i\sigma^M_j\rho^M_i(m)\otimes h_ih_j
\\
& 
+  \rho^M_i\sigma^M_j\rho^M_i(m)\otimes h_j
+  (\rho^M_i\rho^M_j\sigma^M_i(m)
+ \sigma^M_i\rho^M_j\rho^M_i(m) )\otimes h_i
\\
&
+ \sigma^M_i\rho^M_j\sigma^M_i(m)(\SK\otimes h_{i}+\RK\otimes 1)
+  \rho^M_i\rho^M_j\rho^M_i(m) \otimes 1.
\end{split}
\endeq
Hence, $\tau^M$ is independent of the choice of $r$ only if \eqref{def:taubr1}--\eqref{def:taubr3} hold.

\subsubsection{Necessary Conditions (ii)}\label{app:necii}
Here, we deduce the conditions on $\sigma_i^M$ and $\rho^M_i$ such that  the diagrams in \eqref{fig:basedmod} commute. 

Consider the case $w = s_i$ for all $i$.
By \eqref{def:tauaction}, 
$s_i \otimes \alpha_M(b\otimes m) = \tau^M(s_i \otimes (b\cdot m)) = \sigma^M_i(b\cdot m) \otimes h_i + \rho^M_i(b\cdot m) \otimes1 $. 
On the other hand,
\eq
\begin{split}
\tau^B(s_i \otimes b )\otimes m &= \sigma_i(b)\otimes h_i \otimes m + \rho_i(b)\otimes 1 \otimes m 
\\
&\overset{\id\otimes \tau_i^M}{\mapsto} \sigma_i(b)\otimes\sigma_i^M(m) \otimes h_i+ (\sigma_i(b)\otimes\rho^M_i(m) + \rho_i(b)\otimes m)\otimes 1
\\
&\overset{\alpha_M\otimes \id}{\mapsto} \sigma_i(b)\sigma_i^M(m)\otimes h_i+ (\sigma_i(b)\rho^M_i(m) + \rho_i(b)m)\otimes 1.
\end{split}
\endeq
Therefore, the commutativity of the first diagram implies that \eqref{def:M2} holds. 
For the second diagram, 
\eq
\begin{split}
m_W(s_i\otimes s_i) \otimes m &= S_i \otimes s_i \otimes m + R_i \otimes 1\otimes m
\\
&\overset{\id\otimes\tau^M}{\mapsto} S_i \otimes (\sigma_i^M(m) \otimes h_i + \rho^M_i(m) \otimes 1) + R_i \otimes m \otimes 1
\\
&\overset{\alpha_M\otimes \id}{\mapsto} \SK \sigma_i^M(m) \otimes h_i + (\rho^M_i(m) + \RK  m) \otimes 1.
\end{split}
\endeq 
On the other hand,
\eq
\begin{split}
&s_i\otimes \tau^M(s_i\otimes m) = s_i \otimes (\sigma^M_i(m)\otimes h_i + \rho^M_i(m) \otimes 1)  
\\
~~&\overset{\tau^M\otimes \id}{\mapsto}
 (\sigma^M_i\sigma^M_i(m)\otimes h_i^2+\rho^M_i\sigma^M_i(m)\otimes h_i + \sigma^M_i\rho^M_i(m)\otimes h_i + \rho^M_i\rho^M_i(m)\otimes 1
\\
~~&\overset{\id\otimes m_\cH}{\mapsto} (\SK\sigma^M_i\sigma^M_i(m)+ \rho^M_i\sigma^M_i(m) + \sigma^M_i\rho^M_i(m))\otimes h_i + \rho^M_i\rho^M_i(m)+\RK \sigma^M_i\sigma^M_i(m).
\end{split}
\endeq
In other words, \eqref{def:M4} must hold.

\subsubsection{Independence of Choice of $r$}\label{app:indep}
Note that any pair of expressions of $w$ is differed from each other by a chain of braid relations. 
It then suffices to show that, for two expressions $r,r'$ of $w$ which differ by a single braid
relation,
\eq\label{eq:indtau}
(\id\otimes\mu)\circ(\tau^M\otimes\id)(s_i\otimes \tau^M(x \otimes m))
= (\id\otimes\mu)\circ(\tau^M\otimes\id)(s_j\otimes \tau^M(x' \otimes m)),
\endeq
where $s_i$ (resp. $s_j$) is the first transposition of $r$ (resp. $r'$).
We may assume that $i \neq j$, and hence we can write $w =  \beta_{i,j} y = \beta_{j,i}y$ for some subexpression $y$ such that$\beta_{i,j} := s_is_j $ if $|i-j|>1$, and $\beta_{i,j} := s_is_js_i$ if $|i-j|=1$.

Let $m_w \in \cM$ be such that $\tau^M(y\otimes m) = \sum_w m_w \otimes h_w$. 
Note that $\tau^M(y\otimes m)$ does not depend on the choice of $r$ since $\ell(y) < \ell(w)$.

For the first case that $|i-j| >1$, 
\eq
\tau^M(s_j y \otimes m) = \sum_w (1\otimes \mu)(\tau^M(s_j \otimes  m_w) \otimes h_w)
= \sum_w \sigma_j^M(m_w) \otimes h_{s_jw} + \rho_j^M(m_w) \otimes h_{w},
\endeq
and hence the left-hand side of \eqref{eq:indtau} is equal to
\eq
\sum_w (
\sigma_i^M\sigma_j^M(m_w) \otimes h_{s_is_jw} 
+\rho_i^M\sigma_j^M(m_w) \otimes h_{s_jw} 
+\sigma_i^M\rho_j^M(m_w) \otimes h_{s_iw} 
+ \rho^M_i\rho_j^M(m_w) \otimes h_{w}) 
,
\endeq
which equals to the right-hand side of \eqref{eq:indtau},
thanks to that these endomorphisms commute.

For the second case that $|i-j| =1$, the left-hand side of \eqref{eq:indtau} is then equal to
\eq
\begin{split}
&\sum_w (\sigma^M_i\sigma^M_j\sigma^M_i(m_w)\otimes h_{s_i s_j s_i w} 
+  \rho^M_i\sigma^M_j\sigma^M_i(m_w)\otimes h_{s_j s_i w}
+ \sigma^M_i\sigma^M_j\rho^M_i(m_w)\otimes h_{s_i s_j  w}
\\
& 
~~+  \rho^M_i\sigma^M_j\rho^M_i(m_w)\otimes h_{s_j w}
+  (\rho^M_i\rho^M_j\sigma^M_i(m_w)
+ \sigma^M_i\rho^M_j\rho^M_i(m_w) )\otimes h_{s_i w}
\\
&
~~+ \sigma^M_i\rho^M_j\sigma^M_i(m_w)(\SK\otimes h_{s_i x}+\RK\otimes h_{w})
+  \rho^M_i\rho^M_j\rho^M_i(m_w) \otimes h_w),
\end{split}
\endeq
which equals to the right-hand side of \eqref{eq:indtau},
thanks to \eqref{def:taubr1}--\eqref{def:taubr3}.


\endproof

\subsubsection{Commutativity of (WB)}\label{app:WBcom}
We begin with specifying factors of the two composite maps in (WB$^{\ell+1}$) using \eqref{def:tauMdiag}. We obtain the two composite maps corresponding to the leftmost and the rightmost paths in the diagram below:
\eq\label{fig:WBell}
\xymatrixrowsep{0.6pc}
\xymatrixcolsep{1pc}
\xymatrix{
&\ud{\iota_r}{W^{(\ell+1)}} \cB \cM \ar[d] 		
\\
W^{(1)} W^{(\ell)} \ud{\alpha_M}{\cB \cM} \ar[d] \ar@{=}[r]
	& W^{(1)} \ud{\tau^B}{W^{(\ell)} \cB} \cM \ar[d] 
		& 
\\
W^{(1)} \ud{\tau^M}{W^{(\ell)} \cM} \ar[d]
	& W^{(1)} \cB \ud{\tau^M}{ W^{(\ell)} \cM} \ar[d] \ar@{=}[r]
		&W^{(1)} \cB \ud{\tau^M}{ W^{(\ell)} \cM} \ar[d] \ar@{=}[r]
			&\ud{\tau^B}{W^{(1)} \cB}  W^{(\ell)} \cM \ar[d]
\\
W^{(1)} \cM \tcH  \ar@{=}[dr]
	&W^{(1)} \ud{\alpha_M} {\cB \cM} \tcH \ar[d] \ar@{=}[r]
		&\ud{\tau^B}{W^{(1)} \cB} \cM \tcH \ar[d]
			& \cB W^{(1)} \ud{\tau^M}{W^{(\ell)} \cM } \ar[dl] 
\\
	&\ud{\tau^M}{W^{(1)} \cM} \tcH  \ar[d]
		& \cB \ud{\tau^M}{W^{(1)} \cM} \tcH \ar[d]
\\
	&\cM  \tcH \tcH \ar@{=}[dr]
		& \ud{\alpha_M}{\cB \cM} \tcH \tcH  \ar[d] \ar@{=}[r]
			&\cB \cM \ud{m_\cH}{\tcH \tcH}  \ar[d]
\\
	&
		& \cM \ud{m_\cH}{\tcH \tcH} \ar[d]
			& \ud{\alpha_M}{\cB \cM} \tcH \ar[dl]
\\
	&
		&\cM \tcH
}
\endeq
First, note that (WB$^1$) commutes.
It also follows from inductive hypothesis that (WB$^\ell$) commutes.
Therefore, the following corresponding subdiagrams commute:
\eq
\xymatrixrowsep{1pc}
\xymatrixcolsep{1pc}
\xymatrix{
W^{(1)} \ud{\alpha_M} {\cB \cM} \tcH \ar[d] \ar@{=}[r]
	&\ud{\tau^B}{W^{(1)} \cB} \cM \tcH \ar[d]
\\
\ud{\tau^M}{W^{(1)} \cM} \tcH  \ar[d]
	& \cB \ud{\tau^M}{W^{(1)} \cM} \tcH \ar[d]
\\
\cM  \tcH \tcH \ar@{=}[dr]
	& \ud{\alpha_M}{\cB \cM} \tcH \tcH  \ar[d] 
\\
	& \cM \tcH \tcH
}
\qquad
\xymatrix{
W^{(1)} W^{(\ell)} \ud{\alpha_M}{\cB \cM} \ar[d] \ar@{=}[r]
	& W^{(1)} \ud{\tau^B}{W^{(\ell)} \cB} \cM \ar[d] 
\\
W^{(1)} \ud{\tau^M}{W^{(\ell)} \cM} \ar[d]
	& W^{(1)} \cB \ud{\tau^M}{ W^{(\ell)} \cM} \ar[d] 
\\
W^{(1)} \cM \tcH  \ar@{=}[dr]
	&W^{(1)} \ud{\alpha_M} {\cB \cM} \tcH \ar[d] 
\\
	&W^{(1)} \cM \tcH  
}
\endeq
Note that in any composite map produced from a path $W^{(1)} \cB W^{(\ell)} \cM \to \cM \tcH$, one can swap two adjacent factor maps as long as they act on unrelated tensor factors, e.g.,
$\tau^B \id_{\cM \tcH} \circ \id_{W^{(1)} \cB} \tau^M_\ell = \id_{\cB W^{(\ell)} \cM}\circ \tau^B \id_{W^{(\ell)} \cM} : W^{(1)} \cB W^{(\ell)} \cM \to \cB W^{(1)} \cM \tcH$. 
Therefore, the entire diagram \eqref{fig:WBell} commutes.

\subsubsection{Commutativity of (WW)}\label{app:WWcom}
Consider the following diagram:
\eq\label{fig:WWell}
\xymatrixrowsep{0.8pc}
\xymatrixcolsep{0.6pc}
\xymatrix{
W^{(k)} \ud{\iota_r}{W^{(\ell+1)}} \cM \ar[d]
\\
W^{(k)} \ud{\substack{\id m_\cH \circ \tau^M\id \circ \id\tau^M\\=\alpha_M\id \circ \id\tau^M \circ m_W\id}}{W^{(1)} W^{(\ell)} \cM} \ar[d] \ar@{=}[r]
	&W^{(k)} \ud{\id\tau^M \circ m_W\id}{W^{(1)} W^{(\ell)} \cM} \ar[d] \ar@{=}[r]
		& \ud{\tau^B\id\circ \id m_W}{W^{(k)} W^{(1)} W^{(\ell)}} \cM \ar[d]  \ar@{=}[r]
			& \ud{\substack{\id\id\tau^M\circ \id m_W\id \circ\\ ~~~\tau^B\id\id\circ \id m_W\id}}{W^{(k)} W^{(1)} W^{(\ell)} \cM } \ar[d] \ar@{=}[r]
				&  \ud{\substack{m_B\id \circ \id m_W  \circ\\~~~ \tau^B \id \circ \id m_W }}{W^{(k)} W^{(1)} W^{(\ell)}} \cM  \ar[d]
\\
\ud{\tau^M}{W^{(k)} \cM} \tcH \ar[dr]
	& \ud{\substack{\tau^M\circ \id\alpha_M\\ =\alpha_M\id\circ\id\tau^M\circ\tau^B\id}}{W^{(k)} \cB \cM} \tcH \ar[d]
		&\cB \ud{\substack{\id m_\cH \circ \tau^M\id \circ \id\tau^M\\=\alpha_M\id \circ \id\tau^M \circ m_W\id}}{W^{(k)} W^{(\ell)} \cM} \ar[d]
			&\ud{\substack{\alpha_M \circ \id\alpha_M \\ = \alpha_M \circ m_B \id}}{\cB \cB \cM} \tcH \ar[ddl] 
				&\cB \ud{\tau^M}{W^{(\ell+k)} \cM} \ar[d]
\\
	&\cM \ud{m_\cH}{\tcH \tcH} \ar[dr]
		& \ud{\alpha_M}{\cB \cM} \tcH \ar[d]
			&
				&\ud{\alpha_M}{\cB \cM} \tcH \ar[dll]
\\
	&
		&\cM \tcH
			&
				&
}
\endeq
Thanks to the the commutativity of (WW$_k^\ell$), we have the identification of maps 
$\id m_\cH \circ \tau^M\id \circ \id\tau^M =\alpha_M\id \circ \id\tau^M \circ m_W\id: W^{(k)} W^{(\ell)} \cM \to \cM \tcH$.
Since (WB$^k$) commutes, 
$\tau^M\circ \id\alpha_M =\alpha_M\id\circ\id\tau^M\circ\tau^B\id : W^{(k)} \cB \cM \to \cM \tcH$.
Additionally, $\alpha_M \circ \id \alpha_M = \alpha_M\circ m_B\id : \cB \cB \cM \to \cM$ since $M$ is a $B$-module.
Therefore, the diagram \eqref{fig:WWell} is well-defined.

Now, there are five paths from $W^{(k)} W^{(\ell+1)} \cM$ to $\cM \tcH$.
To show the the diagram \eqref{fig:bigcomm} commutes, we need to check that the composite map produced from any such path coincides with the composite map produced from the path to its right. 

First, the composite maps $W^{(k)} W^{(1)} W^{(\ell)} \cM \to \cM\tcH$ corresponding to the leftmost and second leftmost paths coincide, since the factor maps are composed in the exact same order.

Then, recall that in any composite map produced from such a path, one can swap two adjacent factor maps as long as they act on unrelated tensor factors.
Hence, the following diagrams commute:
\eq\label{fig:appcomm1}
\xymatrixrowsep{1pc}
\xymatrixcolsep{1pc}
\xymatrix{
W^{(k)} \cB \ud{\tau^M}{W^{(\ell)} \cM} \ar[d] \ar@{=}[r]
	& \ud{\tau^B}{W^{(k)} \cB} W^{(\ell)} \cM \ar[d] 
\\
\ud{\tau^B}{W^{(k)} \cB}  \cM \tcH \ar[dr] 
	& \cB  W^{(k)}\ud{\tau^M}{W^{(\ell)} \cM}  \ar[d] 
\\
	&\cB  \ud{\tau^M}{W^{(k)}\cM} \tcH \ar[d]
\\
\ud{\alpha_M}{\cB \cM} \tcH \tcH \ar[d] \ar@{=}[r]
	& \cB \cM \ud{m_\cH}{\tcH \tcH} \ar[d]
\\
 \cM \ud{m_\cH}{\tcH \tcH} \ar[dr]
	&\ud{\alpha_M}{\cB \cM} \tcH \ar[d]
\\	
&\cM \tcH
}
\qquad
\xymatrix{
\ud{\id m_W \circ \tau^B\id\circ\id m_W}{W^{(k)} W^{(1)} W^{(\ell)}} \cM \ar[d]
\\
\cB \cB \ud{\tau^M}{W^{(k+\ell)} \cM} \ar[d] \ar@{=}[r]
	&\ud{m_B}{\cB \cB} W^{(k+\ell)} \cM  \ar[d]
\\
\ud{m_B}{\cB \cB} \cM \tcH \ar[dr]
	& \cB  \ud{\tau^M}{W^{(k+\ell)} \cM}  \ar[d]
\\
	&\ud{\alpha_M}{\cB \cM} \tcH \ar[d]
\\	
	&\cM \tcH
}
\endeq
As a result, the entire diagram \eqref{fig:WWell} commutes.
Finally, \eqref{eq:doubleinduction} holds since $m_W=m_B\id \circ \id m_W \id \circ \tau^B \id \circ \id m_W \id : W^{(k)}W^{(\ell+1)} \to \cB W$.

\subsection{Verification of Commutativity of \eqref{fig:bigcomm}}\label{app:bigcomm}
Once again, in any composite map produced from a path $\cB W \cB W \cM N \to \cM N$, one can swap two adjacent factor maps as long as they act on unrelated tensor factors.
Thus, all four diagrams below commutes:
\eq
\xymatrixrowsep{1pc}
\xymatrixcolsep{1pc}
\xymatrix{
\cB W \ud{\alpha_{MN}}{\cB W \cM N} \ar[dd] \ar@{=}[r]
	& \cB W \cB \ud{\tau^M}{W \cM} N \ar[d] 
\\
	& \cB W \ud{\alpha_M}{\cB \cM} \ud{\alpha_N}{\tcH N}\ar[d] \ar@{=}[r]
		& \cB \ud{\tau^M\circ\id\alpha_M}{W \cB \cM} \tcH  N \ar[d]
\\	
\ud{\alpha_{MN}}{\cB W \cM N}\ar[ddr] \ar@{=}[r]
	& \cB \ud{\tau^M}{W \cM} N\ar[d]
		&\cB \cM \tcH \ud{\alpha_N}{\tcH N}\ar[dl]
\\
	& \ud{\alpha_M}{\cB \cM} \ud{\alpha_N}{\tcH N}\ar[d]
\\
	& \cM N
}
\qquad
\xymatrix{
\cB W \cB \ud{\tau^M}{W \cM} N \ar[d] \ar@{=}[r]
	&  \cB \ud{\tau^B}{W \cB} \ud{\tau^M}{W \cM} N \ar[ddl]
\\
\cB \ud{\tau^B}{W \cB} \cM \tcH  N \ar[d]
	&
\\
\cB \ud{\alpha_M\circ\id\tau^M}{\cB W \cM} \tcH  N \ar[d] 
\\
\cB  \cM \tcH \ud{\alpha_N}{\tcH  N} \ar[d] \ar@{=}[r]
	&\ud{\alpha_M}{\cB \cM}  \tcH \ud{\alpha_N}{\tcH  N} \ar[dl]
\\
\ud{\alpha_M}{\cB \cM}  \ud{\alpha_N}{\tcH  N} \ar[d]
\\
\cM N
}
\endeq
\eq\label{fig:appcomm2}
\xymatrixrowsep{1pc}
\xymatrixcolsep{1pc}
\xymatrix{
\cB W \cB \ud{\tau^M}{W \cM} N \ar[d] \ar@{=}[r]
	&\cB\ud{\tau^B}{W \cB} W \cM N \ar[d]
\\
\cB \ud{\tau^B}{W \cB} M \tcH N \ar[d] 
	&\ud{m_B}{\cB \cB} W W \cM N \ar[d]
\\
\cB \cB \ud{\tau^M}{W \cM} \tcH N \ar[d]
	&\cB W \ud{\tau^M}{W \cM} N \ar[d]
\\
\ud{m_B}{\cB \cB} \cM \tcH \tcH N \ar[dr]
	&\cB \ud{\tau^M}{W M} \tcH  N \ar[d]
\\
	&\ud{\alpha_M}{\cB \cM}  \udr{\alpha_N \circ m_\cH\id}{\tcH \tcH N} \ar[d]
\\
	& \cM N
}
\qquad
\xymatrix{
\ud{ m_B \circ \id\tau^B}{\cB W \cB} W \cM N \ar[d] \ar@{=}[r]
	& \cB \ud{\tau^B}{W \cB} W \cM N \ar[d] \ar@{=}[r]
		& \ud{ m}{\cB W \cB W} \cM N \ar[ddd] 
\\
\cB \ud{m_W}{WW} \cM N \ar[dr]
	&\ud{m_B}{\cB \cB} \ud{m_W}{W W} \cM N \ar[d]
		&
\\
\cB \cB \ud{\tau^M}{W \cM} N \ar[dr]  \ar@{=}[r]
	&\ud{m_B}{\cB \cB} W \cM N \ar[dr] 
\\
\cB \ud{\alpha_M}{\cB \cM} \tcH N \ar[dr] \ar@{=}[r]
	& \ud{m_B}{\cB \cB} \cM \tcH N \ar[d]
		& \cB \ud{\tau^M}{W \cM} N \ar[dl]
\\
	&\ud{\alpha_M}{\cB \cM} \ud{\alpha_N}{\tcH N} \ar[d]
\\
	&\cM N
}
\endeq
As a result, the diagram in \eqref{fig:bigcomm} commutes.

\subsection{Proof of Theorem \ref{thm:indhom}} \label{app:indhom}

Since $M$ is a \based module, \hyperref[WB]{(WB)} translates to the following commutative diagram:
\eq\label{fig:BWcomm}
\xymatrixrowsep{1pc}
\xymatrixcolsep{1pc}
\xymatrix{
\ud{(\tau^B)\inv}{\cB W} \cM \ar@{=}[r] \ar[d]
	& \cB \ud{\tau^M}{W \cM} \ar[d]
\\
W \ud{\alpha_M}{\cB \cM} \ar[d]
	& \ud{\alpha_M}{\cB \cM} \tcH \ar[d]
\\
\ud{\tau^M}{W \cM} \ar[r]
	& \cM \tcH
}
\endeq
It suffices to check that
$(b H_w) \cdot \tau^M( H_x \otimes m) =  \tau^M( (b H_w) \cdot (H_x \otimes m))$ for all $b \in \cB$, $w, x \in \Sigma_d$, $m\in \cM$,
which will follow as long as the diagram below commutes:
\eq\label{fig:BWWM}
\xymatrixrowsep{1pc}
\xymatrixcolsep{1pc}
\xymatrix{
\cB W \ud{\tau^M}{W \cM} \ar[d] \ar@{=}[r]
	& \cB \ud{m_W}{W W} \cM \ar[d] \ar@{=}[rr]
		&
			&\cB \ud{\id\alpha_M \circ (\tau^B)\inv\id \circ m_W\id}{W W \cM} \ar[ddd]
\\
\cB \ud{\tau^M}{W \cM} \tcH	\ar[d]
	&\cB \cB \ud{\tau^M}{W\cM} \ar[d] \ar@{=}[r]
		& \cB \ud{(\tau^B)\inv}{\cB W} \cM \ar[d]
\\
\cB \cM \ud{m_\cH}{\tcH \tcH} \ar[d] 
	&\cB \ud{\alpha_M}{\cB \cM } \tcH \ar[d]
		& \cB W \ud{\alpha_M}{\cB \cM} \ar[d] 
\\
\ud{\alpha_M}{\cB \cM} \tcH \ar@{=}[r] 
	&\ud{\alpha_M}{\cB \cM} \tcH \ar@{=}[dr] 
		&\cB \ud{\tau^M}{W \cM} \ar[d] \ar@{=}[r]
			& \ud{(\tau^B)\inv}{\cB W} \cM \ar[d]
\\
	&
		&\ud{\alpha_M}{\cB \cM} \tcH \ar[d]
			& W \ud{\alpha_M}{\cB \cM} \ar[d]
\\
	&
		&\cM \tcH
			& \ud{\tau^M}{W \cM} \ar[l]
}
\endeq
The commutativity of \eqref{fig:BWWM} follows from the fact that 
both \hyperref[WW]{(WW)}  and \eqref{fig:BWcomm}  commute.

\end{document}